\documentclass[11pt]{amsart}

\usepackage[T1]{fontenc}
\usepackage{lmodern}
\usepackage[letterpaper,margin=1in]{geometry}
\usepackage{graphicx}
\usepackage{amssymb,amsfonts}
\usepackage{mathtools}
\usepackage{bm}
\usepackage{mathrsfs}
\usepackage{enumitem}
\usepackage{microtype}
\microtypesetup{expansion=false}
\usepackage{booktabs}
\usepackage{array}
\usepackage{xcolor}
\usepackage{hyperref}
\usepackage{cleveref}

\hypersetup{
  colorlinks=true,
  linkcolor=blue!50!black,
  citecolor=blue!50!black,
  urlcolor=black,
  filecolor=black,
  pdftitle={Sharp Approximation Rates for Shallow ReLU-k Neural Networks on Critical Besov Classes},
  pdfauthor={Yupeng Wang}
}
\allowdisplaybreaks

\makeatletter
\def\@setkeywords{%
  {\bfseries Key words.}\enspace\@keywords}
\def\@setsubjclass{%
  {\bfseries MSC codes.}\enspace\@subjclass}
\def\@adminfootnotes{%
  \let\@makefnmark\relax \let\@thefnmark\relax
  \ifx\@empty\@date\else \@footnotetext{\@setdate}\fi
  \ifx\@empty\@keywords\else \@footnotetext{\@setkeywords}\fi
  \ifx\@empty\@subjclass\else \@footnotetext{\@setsubjclass}\fi
  \ifx\@empty\thankses\else \@footnotetext{%
    \def\par{\let\par\@par}\@setthanks}%
  \fi
}
\makeatother

\theoremstyle{plain}
\newtheorem{theorem}{Theorem}[section]
\newtheorem{proposition}[theorem]{Proposition}
\newtheorem{lemma}[theorem]{Lemma}
\newtheorem{corollary}[theorem]{Corollary}

\theoremstyle{definition}
\newtheorem{definition}[theorem]{Definition}

\theoremstyle{remark}
\newtheorem{remark}[theorem]{Remark}

\numberwithin{equation}{section}
\numberwithin{table}{section}
\numberwithin{figure}{section}

\newcommand{\R}{\mathbb R}
\newcommand{\N}{\mathbb N}
\newcommand{\Sph}{\mathbb S}

\newcommand{\supp}{\operatorname{supp}}

\newcommand{\eps}{\varepsilon}

\title[Sharp Besov Approximation Rates]
{Sharp Approximation Rates for Shallow $\text{ReLU}$ Neural Networks on Critical Besov Classes}
\author{Yupeng Wang}
\date{}
\keywords{Critical Besov spaces, shallow \texorpdfstring{$\operatorname{ReLU}^k$}{ReLU-k} neural networks,
variation spaces, nonlinear $n$-term approximation, sharp approximation rates}
\subjclass[2020]{46E35, 41A46, 41A25}

\begin{document}
\begin{abstract}
Let $\mathbb D$ be the normalized ridge dictionary generated by
$\operatorname{ReLU}^k$ on a bounded Lipschitz domain
$\Omega\subset\mathbb R^d$. We determine the sharp algebraic rate of
finite $n$-term approximation from $\mathbb D$ when the outer
$\ell^1$ coefficient budget is independent of $n$. More precisely, let
$d\ge3$, $k\in\mathbb N_+$, $0\le m\le k$, $0<p<1$, and $0<q\le1$.
For the unit ball of the critical Besov space
$B_{p,q}^{k+d/p}(\Omega)$, measured in $H^m(\Omega)$, the optimal
algebraic exponent is
\[
 \min\left\{\frac{k-m+d/2}{d-1},
 \frac{k-m+d/2+1/p-1/2}{d}\right\}.
\]
We prove two-sided estimates in the three regimes determined by the two
branches of this minimum and give the corresponding logarithmic factors at
and beyond the transition. The estimates coincide without logarithmic loss
in the strict angular regime and at the transition when
$q\le[1/2+(k-m+d/2)/(d-1)]^{-1}$. The upper estimate combines critical wavelet sparsity,
localized Fourier--Radon representations, and a stable allocation of
directions and biases. The lower estimates arise from two distinct
obstructions, namely radial ridge approximation on the direction sphere and
Gevrey localization in the joint direction--bias space. The critical
smoothness is exactly the endpoint at which Besov regularity provides a
uniform shallow-network variation bound without additional scale decay.
The resulting representation exponent is strictly larger than the
degree-limited exponent for fixed-degree isotropic finite elements under a
parameter-count comparison. This last statement concerns best approximation,
not training or computational complexity.
\end{abstract}

\maketitle

\section{Introduction and main result}\label{sec:introduction}

Neural-network-based PDE solvers form an important class of nonlinear
numerical methods for partial differential equations. They replace a
prescribed mesh or linear trial space by a trainable network class and are
therefore attractive when conventional discretizations become expensive.
Their best achievable accuracy, however, is limited by how efficiently the
underlying solution can be represented with controlled width and coefficient
size. A functional-analytic description of that representation power is thus
needed before optimization, quadrature, or solver errors can be assessed
\cite{EYu2018,Xu2020}.

This paper studies that question for finite shallow
$\operatorname{ReLU}^k$ networks. The directions are normalized, the biases
range over a fixed compact interval, and the sum of the absolute outer
coefficients is bounded independently of the width. For a target in
$\operatorname{ReLU}^k$ networks. Let $\mathbb D$ denote the ridge dictionary
with normalized directions and biases in a fixed compact interval, and let
$\mathcal L_1(\mathbb D)$ denote its measure-valued variation space. The sum
of the absolute outer coefficients is bounded independently of the width.
For a target in
$B_{p,q}^{k+d/p}(\Omega)$, with $0<p<1$ and $0<q\le1$, we ask how fast the
best $H^m(\Omega)$ error can decay as the number of active ridge atoms grows.
The fixed coefficient budget is essential: it connects the finite networks
to the measure-valued variation space while preventing the rate from being obtained through coefficients whose mass diverges with the width.

Besov spaces provide the natural source scale for this problem. Their
quasi-norms capture smoothness and the spatial sparsity of corner, edge, and
multiscale singularities. They also govern best $n$-term wavelet
approximation and the approximation classes underlying adaptive finite element methods \cite{DeVore1998,GaspozMorin2014,Gantumur2017}. At the
critical index, the companion paper \cite{LiWangSIMA2026} proves
\[
 B_{p,q}^{k+d/p}(\Omega)\hookrightarrow\mathcal L_1(\mathbb D),
 \qquad 0<p<1,\quad 0<q\le1,
\]
for the normalized ridge dictionary used here. This embedding supplies an
infinite-width representation with uniformly bounded variation norm. It does
not determine the sharp finite-width error, the transition between the
direction and direction--bias regimes, or a matching lower bound under the
same coefficient budget. Resolving these three points is the purpose of the
present paper.

Two further developments clarify the surrounding approximation-theoretic
picture.  Approximation classes for adaptive methods can be characterized by
Besov smoothness under suitable refinement models
\cite{BinevDahmenDeVorePetrushev2002}, while domain-intrinsic weighted
variation spaces retain dimension-robust shallow-ReLU approximation rates on
bounded domains \cite{DeVoreNowakParhiSiegel2025}.  Entropy estimates also
lead to convergence guarantees for greedy nonlinear dictionary approximation
\cite{LiSiegel2024}.  These results motivate keeping separate the three issues
addressed here: membership in a representation space, algorithmic selection,
and the sharp best finite-width rate under a fixed coefficient budget.

The result also permits a precise comparison with fixed-degree finite
elements. Consider shape-regular isotropic simplicial meshes and polynomial
degree $k$. If $N$ denotes the number of elements, then $N$ is proportional
to the number of degrees of freedom when $d$ and $k$ are fixed. Classical
local polynomial approximation imposes the degree-limited benchmark
\[
 N^{-(k+1-m)/d}
\]
in $H^m(\Omega)$, or in the corresponding broken norm when the finite element
space is not $H^m$-conforming \cite{Ciarlet2002,LinXieXu2014}. Besov and
multilevel characterizations identify the targets that attain a prescribed
best-mesh rate, but they do not remove the local polynomial degree barrier.
The critical class considered here embeds continuously into
$H^{k+1}(\Omega)$ when $d\ge3$, since
$B_{p,q}^{k+d/p}\hookrightarrow B_{2,q}^{k+d/2}$ and $k+d/2>k+1$.
Consequently, the classical fixed-degree benchmark applies and becomes
$N^{-k/d}$ in the $H^1$ energy norm. Rate-optimal AFEM theory compares an
adaptive algorithm with its best-mesh approximation class, usually for the
energy error together with data oscillation
 \cite{BinevDahmenDeVore2004,Stevenson2005,Stevenson2007,CasconEtAl2008}.
Related quasi-optimality results for adaptive mixed methods show that the
appropriate benchmark also depends on the variational norm and the variables
being controlled \cite{Li2021Natural}.

Our answer gives a direct comparison with this fixed-degree finite element
benchmark.  Up to the logarithmic factors in the second and third cases, the
network exponent is
\[
 \Gamma_{p,m}=\min\left\{
 \frac{k-m+d/2}{d-1},
 \frac{k-m+d/2+1/p-1/2}{d}
 \right\}.
\]
For $d\ge3$, $0<p<1$, and $0\le m\le k$, both candidate exponents exceed
the fixed-degree finite element value.  Indeed,
\[
 \frac{k-m+d/2}{d-1}-\frac{k+1-m}{d}
 =\frac{d(d-2)/2+k-m+1}{d(d-1)}>0,
\]
and
\[
 \frac{k-m+d/2+1/p-1/2}{d}-\frac{k+1-m}{d}
 =\frac{(d-3)/2+1/p}{d}>0.
\]
At $p=p_\ast$ the network rate has algebraic part $n^{-A_m}$, whereas
for $p_\ast<p<1$ it has algebraic part $n^{-G_{p,m}}$.  The logarithmic
gaps in these two cases do not alter the comparison because every fixed
power of $\log n$ is dominated by $n^\varepsilon$.  Consequently, after
identifying $n$ neurons and $N$ finite element degrees of freedom only at the
level of fixed-dimensional parameter counts, the best shallow-network error
has a strictly larger algebraic exponent on the critical Besov ball.  This
gain reflects the global hyperplane geometry and nonlinear parameter
selection of the ridge dictionary.  It is not a claim that a trained network
solves a PDE faster than an AFEM code.  The present analysis excludes
optimization error, numerical integration, conditioning, linear or nonlinear
solver work, and the cost of error certification.  It also does not compare
against anisotropic or $hp$-adaptive finite elements.
A recent bit-complexity analysis further cautions that a favorable exponent
in width or parameter count need not translate into a smaller finite-precision
information cost \cite{MaoXuBit2026}.  Thus the comparison above concerns
best representation error under the stated parameterization and nothing
stronger.

Two structural features explain why the critical index is delicate.
Rychkov's extension theorem transfers the Littlewood--Paley and
coefficient-space description of Besov regularity to the Lipschitz domains
used here \cite{Rychkov1999}. At the same time, an $\ell^p$-normalized family
of bumps at scale $h$ has uniformly bounded $B_{p,q}^{k+d/p}$ quasi-norm,
while its $H^m$ norm still records that spatial scale. Critical coefficient
sparsity and the geometry of the ridge dictionary therefore enter the
finite-width problem at the same order.

Barron's classical estimate \cite{Barron1993} gives dimension-robust Monte
Carlo approximation in $L^2$ for suitable Fourier moment classes, with the
output-layer $\ell^1$ mass controlled by an integral representation norm.
Random approximation and entropy arguments were developed further by
Makovoz \cite{Makovoz1996}.  Convex infinite-width formulations and their
output-weight regularization appear in \cite{Bach2017}, while the modern
function-space interpretation of Barron-type models is developed in
\cite{EMaWu2022}.  For ReLU and squared-ReLU ridge dictionaries,
Klusowski and Barron obtained estimates with simultaneous $\ell^1$ and
sparsity control \cite{KlusowskiBarron2018}.  These works motivate keeping
the coefficient budget explicit rather than treating it as an unspecified
width-dependent constant.

Approximation by ridge profiles nevertheless has geometric limitations.
The classical theory and its neural-network interpretation are surveyed in
\cite{Pinkus1999}, and Petrushev established direct approximation estimates
for ridge functions and neural networks \cite{Petrushev1998}.  Lower bounds
for Sobolev-type classes and families of ridge targets reveal the role of the
$(d-1)$-dimensional direction space
\cite{MaiorovMeirRatsaby1999,KLM2008}.  These angular obstructions also apply
to a fixed $\operatorname{ReLU}^k$ activation.

Recent work has sharpened this quantitative theory.  Siegel and Xu established
dimension-independent rates for broad classes of activations and showed how
stratified sampling can improve the basic sampling estimate
\cite{SiegelXu2020}.  They also obtained high-order $H^m$ approximation rates
for spectral Barron classes and
$\operatorname{ReLU}^k$ dictionaries \cite{SiegelXu2022}.  Their high-order
construction does not impose a uniform $\ell^1$ bound on the output
coefficients, and whether the same rate holds with such a bound is left open
there.  They subsequently placed neural representation spaces in a common
dictionary-variation framework \cite{SiegelXu2023} and proved sharp
approximation, entropy, and width estimates for smooth parameterized
dictionaries \cite{SiegelXu2024}.  Uniform-norm and derivative estimates for
variation balls are studied in \cite{MaSiegelXu2022,Siegel2025}.  Yang and
Zhou derived optimal shallow $\operatorname{ReLU}^k$ rates on classical
smoothness classes \cite{YangZhou2025}, and Mao, Siegel, and Xu used
Radon-transform methods to establish nearly optimal Sobolev approximation
rates over a broad range of indices \cite{MaoSiegelXu2026}.  A general
account of approximation, stability, and rate-distortion questions for
neural-network manifolds is given in \cite{DeVoreHaninPetrova2021}.

A complementary line of work fixes the inner parameters and optimizes only
the outer coefficients.  Liu, Mao, and Xu prove the rate
$n^{-(r-m)/d}$ in $H^m$ for deterministic well-distributed directions and
biases, with a logarithmic loss for independent uniform sampling
\cite{LiuMaoXu2025}.  If their theorem is applied to the present source class
only through the Sobolev embedding
$B_{p,q}^{k+d/p}(\Omega)\hookrightarrow H^{k+d/2}(\Omega)$, it yields
$n^{-\tau_m/d}$, where $\tau_m=k-m+d/2$.  In this specialization, the
coefficient budget in that construction grows like $n^{1/(2d)}$, whereas
the present theorem gives the strictly larger algebraic exponent
$\Gamma_{p,m}>\tau_m/d$ with a budget independent of $n$.  The comparison
isolates the extra information carried by the $p<1$ Besov sparsity: it allows
spatially concentrated targets that need not possess the additional half
Sobolev derivative required to obtain the same exponent from the linearized
Sobolev result.  It is not a same-class dominance statement.  On the sphere,
Mao and Xu prove that quasi-uniform linearized $\operatorname{ReLU}^k$
schemes saturate above the critical Sobolev index
\cite{MaoXuSaturation2025}.  That saturation result concerns fixed centers
and therefore does not furnish a lower bound for the adaptive nonlinear
dictionary studied here.

The present paper treats the critical quasi-Banach class
$B_{p,q}^{k+d/p}(\Omega)$ with $0<p<1$ and $0<q\le1$.  The approximants use
normalized directions, biases in a fixed compact interval, and an
output-layer $\ell^1$ budget independent of the width.  This formulation
differs from approximation of a variation ball and from models that permit
an unrestricted polynomial correction.  It exposes two independent lower
bound mechanisms.  The first is an angular obstruction governed by the
direction sphere.  The second is a direction--bias packing obstruction that
couples the finite parameter budget to critical Besov sparsity.

Building on the embedding and localized Fourier--Radon estimates of
\cite{LiWangSIMA2026}, we use those functional-analytic results as black-box
inputs. The remaining task is to convert a width-independent variation
budget into a sharp finite-width rate and to establish matching algebraic
lower bounds under the same budget. The two
branches of $\Gamma_{p,m}$ meet at the threshold introduced in
\cref{sec:finite-dictionary}.  The main theorem states the matching upper and
lower bounds under the same budget. The two algebraic mechanisms lead to
different rate branches, which meet at the threshold introduced in
\cref{sec:finite-dictionary}. The main theorem states the matching upper and
lower rates separately in the three parameter regimes, with explicit
logarithmic factors at the transition and in the direction--bias regime.

The paper makes three contributions.  First, it formulates the problem for
the finite class $\Sigma_{n,M}$ and proves a stable upper estimate with a
coefficient budget independent of $n$.  Second, it combines a radial angular
obstruction with a localized Gevrey-packet and cell-counting argument to identify
the algebraic phase transition along the critical line.  Finally, it realizes every polynomial of degree
at most $k$ by a fixed number of dictionary atoms, which shows that the
augmented and non-augmented formulations have the same algebraic upper rate.
We first explain this scope, then state the result precisely. The
technical ingredients are developed after the main statement.

\paragraph{Why the critical line is the focus.}
The relation $s=k+d/p$ is a structural endpoint for the present
fixed-budget problem, not a normalization imposed on an otherwise
unchanged theorem. The forward embedding of
unchanged theorem. Rychkov's extension theorem makes the
Littlewood--Paley description available on the Lipschitz domains considered
here \cite{Rychkov1999}. The forward embedding of
\cite[Theorem~1.1]{LiWangSIMA2026} gives uniform variation control at
$s=k+d/p$ for $0<q\le1$, and its rescaled-bump construction proves
that the smoothness threshold cannot be lowered
\cite[Section~4.1]{LiWangSIMA2026}. At scale $h_j=2^{-j}$, a
unit-amplitude localized atom has variation cost $O(h_j^{-k})$,
whereas a $B_{p,q}^s$ wavelet coefficient vector carries the weight
$h_j^{d/p-s}$. Thus the total variation cost is bounded by
\[
 \sum_j h_j^{s-k-d/p}
 \left(h_j^{d/p-s}\|(\lambda_{j,\nu})_\nu\|_{\ell^p}\right).
\]
Exactly on the critical line the geometric factor is one. The fine index
$q\le1$, spatial coefficient sparsity, and the distribution of the width
across scales must then be used simultaneously. This endpoint balance
is responsible for the transition in the main theorem and for its
explicit logarithmic terms.

The neighboring smoothness regimes pose different questions. If
$s<k+d/p$, the entire Besov unit ball has no uniform variation bound of
the above form. This does not say that individual functions lack ridge
approximants, or that approximation with a growing coefficient budget is
impossible. It says that uniform representability under a fixed budget
must first be addressed separately, even when the source class is
embedded in the error space. If $s>k+d/p$, the additional geometric
decay yields stronger summability. The embedding
$B_{p,q}^{s}(\Omega)\hookrightarrow B_{p,q}^{k+d/p}(\Omega)$ immediately
transfers our upper estimates to that smaller class, but it does not
transfer the matching lower estimates. Determining the optimal rate
there requires a new balance between excess smoothness, scale
allocation, and the fixed activation degree. We therefore establish a
sharp algebraic theory on the critical line and make no optimality
claim away from it.

This choice also specifies the PDE scope. Besov regularity can encode
spatial sparsity more accurately than a single Hilbert--Sobolev index,
but the condition $s-d/p=k$ remains a substantive assumption. It is not
automatically satisfied by every corner or interface singularity, and
must be checked for the particular solution before applying the theorem.

\subsection{Notation and statement of the main result}\label{sec:finite-dictionary}

Throughout, $\Omega\subset\R^d$ is a bounded Lipschitz domain,
$d\ge2$, $k\in\N_+$, and $m\in\N_0$ with $m\le k$. We set
$\sigma_k(t)=(t)_+^k$, where $(t)_+=\max\{t,0\}$.
The dimension restriction $d\ge3$ is imposed in the upper and main
theorems. Every nonempty open domain contains the interior balls and
closed cylinders used in the lower constructions, so these do not
constitute additional geometric hypotheses.

We use the real Sobolev norm
\[
 \|f\|_{H^m(\Omega)}^2
 =\sum_{|\alpha|\le m}\|D^\alpha f\|_{L^2(\Omega)}^2.
\]
For a fixed smooth inhomogeneous Littlewood--Paley resolution
$\{\Delta_j\}_{j\ge0}$, the Besov quasi-norm is
\[
 \|F\|_{B_{p,q}^s(\R^d)}
 =\left\|\bigl(2^{js}\|\Delta_jF\|_{L^p(\R^d)}\bigr)_{j\ge0}
   \right\|_{\ell^q}.
\]
On $\Omega$ we take the infimum of this quasi-norm over extensions
$F|_\Omega=f$. Standard equivalent resolutions give the same space
\cite{Triebel1983}. The Fourier convention is
$\widehat F(\xi)=\int_{\R^d}e^{-ix\cdot\xi}F(x)\,dx$.
The symbol $\Pi_k$ denotes polynomials of total degree at most $k$.
We write $a\lesssim b$ for an inequality up to a constant independent
of widths, scales, and the target function, and $a\simeq b$ when both
inequalities hold. Constants may depend on $d,k,m,p,q,\Omega$, fixed
cutoffs, and prescribed finite differentiation orders. In Gevrey lower
bounds they may also depend on the chosen $\vartheta$ and coefficient
budget. All limits and integrals in function spaces use their norm
topology unless stated otherwise.

Choose once and for all
\[
 c<\inf_{x\in\Omega,\,|\omega|=1}\omega\cdot x-1,
 \qquad
 d_0>\sup_{x\in\Omega,\,|\omega|=1}\omega\cdot x+1.
\]

The normalized shallow $\operatorname{ReLU}^k$ dictionary is
\[
 \mathbb D:=\left\{\sigma_k(\omega\cdot x-b):
 \omega\in\mathbb S^{d-1},\ b\in[-c,c]\right\}.
\]
Thus the sign and range of the bias and the normalization $|\omega|=1$
are fixed throughout.

\begin{definition}[Finite dictionary class and best $n$-term error]
For $n\in\N_0$ and $M\in[0,\infty]$ define
\[
\Sigma_{n,M}
:=\left\{
\sum_{r=1}^{N}a_r\sigma_k(\omega_r\cdot x-b_r):
 N\in\{0,\ldots,n\},\ |\omega_r|=1,\ b_r\in[-c,c],\ \sum_{r=1}^{N}|a_r|\le M
\right\}.
\]
The empty sum is zero, so $\Sigma_{n,0}=\{0\}$.
When $M=\infty$ the coefficient constraint is omitted.
The dictionary is continuously parameterized. ``Finite'' refers to the
number of active atoms, not to a fixed finite list of allowed parameters.  For $f\in H^m(\Omega)$ put
\[
\sigma_{n,m}(f;M)
:=\inf_{g\in\Sigma_{n,M}}\|f-g\|_{H^m(\Omega)},
\]
and for a quasi-Banach space $X\hookrightarrow H^m(\Omega)$ define
\[
E_{n,m}(B_X;M)
:=\sup_{\|f\|_X\le 1}\sigma_{n,m}(f;M).
\]
\end{definition}

The corresponding variation space consists of the functions
$f\in H^m(\Omega)$ admitting a representation
\[
 f=\int_{\mathbb S^{d-1}\times[-c,c]}
 \sigma_k(\omega\cdot x-b)\,\mathrm d\mu(\omega,b)
 \quad\text{in }H^m(\Omega).
\]
Its norm is
\[
 \|f\|_{\mathcal L_1(\mathbb D)}
 =\inf\{\|\mu\|_{\mathrm{TV}}:\mu\text{ represents }f\}.
\]

Throughout the critical Besov analysis we put
\[
s_0:=k+\frac dp,
\qquad
\tau_m:=k-m+\frac d2,
\qquad
\beta_m:=k-m+\frac12,
\]
\[
A_m:=\frac{\tau_m}{d-1},
\qquad
G_{p,m}:=\frac{\tau_m+1/p-1/2}{d},
\qquad
\Gamma_{p,m}:=\min\{A_m,G_{p,m}\}.
\]
The threshold where the two branches meet is
\[
p_\ast
:=\frac{d-1}{d-1+\beta_m}.
\]
Indeed $A_m=G_{p_\ast,m}$.

Set $L_n=1+\log(2+n)$ and $a_+=\max\{a,0\}$.  The main result can now be stated without any
additional model notation.

\begin{theorem}[Sharp approximation rates in three regimes]\label{thm:main-sharp}
Assume $d\ge3$, $0<p<1$, and $0<q\le1$, with $k,m,\Omega$ as above.
There is a fixed $0<M_0<\infty$, independent of $n$ and $\vartheta$,
such that the following three cases hold for all integers $n\ge2$.  In the bounds below, $E_{n,m}$
abbreviates $E_{n,m}(B_{B_{p,q}^{k+d/p}(\Omega)};M_0)$.  The constant
$c>0$ and all implicit constants are independent of $n$:
\begin{enumerate}[label=(\roman*)]
\item if $0<p<p_\ast$, then $E_{n,m}\simeq n^{-A_m}$;
\item if $p=p_\ast$, then
\[
 cn^{-A_m}\le E_{n,m}
 \lesssim n^{-A_m}L_n^{(A_m+1/2-1/q)_+};
\]
\item if $p_\ast<p<1$, then, for every fixed
$0<\vartheta<1$,
\[
 c_\vartheta n^{-G_{p,m}}L_n^{-\tau_m/\vartheta}
 \le E_{n,m}
 \lesssim n^{-G_{p,m}}
 L_n^{\frac{1/p-1}{1/p_\ast-1}
 (A_m+1/2-1/q)_+}.
\]
\end{enumerate}
\end{theorem}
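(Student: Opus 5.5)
The proof splits into an upper estimate valid in all three regimes and two independent lower-bound constructions, which are then combined case by case.

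\emph{Upper bound.} Extend $f$ by Rychkov's theorem and expand it in a compactly supported wavelet basis of sufficient regularity. Critical sparsity is then expressed through the level vectors $\lambda_j=(\lambda_{j,\nu})_\nu$ with $\|(\|\lambda_j\|_{\ell^p})_j\|_{\ell^q}\lesssim 1$, where the weight $2^{j(s_0-d/p)}$ is absorbed by the $\operatorname{ReLU}^k$ normalization of a unit atom. Each normalized wavelet atom at scale $2^{-j}$ is written, by the localized Fourier--Radon representation of \cite{LiWangSIMA2026}, as an integral over $\mathbb S^{d-1}\times[-c,c]$ with total variation $O(2^{-jk})$ times its amplitude. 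The integrand is smooth in the direction--bias variables at the natural scale of the atom. Two discretizations of such an integral are then available. The first is angular quadrature, in which the direction sphere is sampled by $n_j$ points and the bias is integrated exactly. The resulting error in $H^m$ decays like $n_j^{-\tau_m/(d-1)}$, by the stratified-sampling/quadrature estimates for ridge integrals of Siegel--Xu type applied to $\operatorname{ReLU}^k$ kernels of smoothness $\tau_m$. The second is joint direction--bias sampling with $d$-dimensional stratification, where the $\ell^p$ spatial sparsity enters: keeping only the largest coefficients at each level yields the extra factor $1/p-1/2$ over $d$. Stratified sampling preserves the total variation, so the coefficient budget $M_0$ is inherited from the embedding constant of \cite[Theorem~1.1]{LiWangSIMA2026}. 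It remains to allocate the width, $\sum_j n_j\le n$. Optimizing the per-level error, which has the form $\|\lambda_j\|_{\ell^p}\min\{n_j^{-A_m},\ \text{(sparsity term)}\}$, gives $n^{-\Gamma_{p,m}}$. In regime (i) the levels decay geometrically after summation, so there is no logarithm. At $p=p_\ast$ the two mechanisms balance across roughly $L_n$ active levels, and Hölder's inequality in $j$ with exponents tied to $q$ and $2$ produces the factor $L_n^{(A_m+1/2-1/q)_+}$. In regime (iii) the number of active levels is rescaled by the ratio $(1/p-1)/(1/p_\ast-1)$, which gives the stated power. Polynomial corrections are realized with a fixed number of atoms, so no augmentation is needed.

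\emph{Lower bounds.} For the exponent $A_m$, which appears in all three regimes (it is the lower bound of (i) and (ii), and it dominates in (iii) only in the sense that $G_{p,m}\le A_m$ there), I would take radial bumps at unit scale whose radial profiles are drawn from a rich family. The known ridge lower bounds of Maiorov--Meir--Ratsaby and Konyagin--Kuleshov--Maiorov type show that $n$ ridge functions can approximate such a family only to accuracy $n^{-\tau/(d-1)}$ in $L^2$, where $\tau$ is the Sobolev smoothness. Here the relevant smoothness is $\tau_m=k-m+d/2$, which is the $H^{k+d/2}$ level inherited from the embedding into $B_{2}^{k+d/2}$, differentiated down to $H^m$. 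The point of the construction is that a unit-scale smooth function has bounded Besov quasi-norm for every $p$. For the exponent $G_{p,m}$ in regime (iii), I would place $N\simeq n^{\,\cdot}$ disjoint Gevrey-class packets at scale $h$ with $\ell^p$-normalized amplitudes. These have bounded critical Besov norm but large $H^m$ mass. A cell-counting argument in joint $(\omega,b)$ space then shows that each ridge atom with bounded coefficient interacts nontrivially with only $O(1)$ cells at the resolution $h$. Gevrey decay, measured with parameter $\vartheta$, controls the tails, and it is this control that costs the factor $L_n^{-\tau_m/\vartheta}$. Choosing $h$ to balance the uncovered packets against the number of atoms $n$ yields $n^{-G_{p,m}}$.

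\emph{Main obstacle.} I expect the hardest step to be the cell-counting estimate under the fixed $\ell^1$ budget. The difficulty is to show that far-field contributions of the atoms cannot cooperatively approximate many packets at once. The plan is to test against dual Gevrey packets whose Radon transforms are concentrated in $(\omega,b)$ cells with super-polynomially small tails, and then to sum these tails using the budget $M_0$. This is where the $\vartheta$-dependent logarithm arises.
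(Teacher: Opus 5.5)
\textbf{Angular lower bound.} As you set it up, this argument does not work. A ridge lower bound for the Sobolev ball of $H^{k+d/2}$ is not inherited by the strictly smaller unit ball of $B_{p,q}^{k+d/p}(\Omega)$, since the embedding runs the wrong way for lower bounds. The hard function itself must therefore be admissible.

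A fixed unit-scale radial bump is admissible. However, the arbitrary-profile ridge bounds you cite give nothing algebraic for an $n$-independent smooth function, because all polynomials of degree $\simeq n^{1/(d-1)}$ are sums of $n$ ridges. Suppose instead the radial profile oscillates at frequency $N\simeq n^{1/(d-1)}$ over a region of unit size, as in Sobolev-type hard families. Then its critical quasi-norm exceeds its $H^{k+d/2}$ norm by a factor $\simeq N^{d/p-d/2}$. After renormalization you obtain at best about $n^{-(k-m+d/p)/(d-1)}$, which is strictly weaker than $n^{-A_m}$.

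The missing idea is concentration. The paper takes the single bump $\psi_h=h^k\psi((\cdot-x_0)/h)$ with $h\simeq n^{-1/(d-1)}$. Its critical Besov norm is bounded uniformly in $h$ precisely because $s_0-d/p=k$ (Lemma~\ref{lem:single-besov}), while $\|\psi_h\|_{H^m}\simeq h^{\tau_m}$ (Lemma~\ref{lem:single-Hm}). For such a bump the critical and $H^{k+d/2}$ norms are comparable, which is why your exponent $\tau_m/(d-1)$ is nonetheless correct. The obstruction then comes from a rescaled KLM radial comparison combined with the Remez inequality (Lemma~\ref{lem:scaled-klm}). Before that, $H^m$ is reduced to $L^2$ by $\Delta^r$ for even $m$, or by $\nabla\Delta^r$ followed by the ridge-preserving spherical-mean difference for odd $m$.

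\textbf{Upper bound for $p>p_\ast$.} The mechanism you describe does not support this bound. A per-level bound $\|\lambda_j\|_{\ell^p}n_j^{-A_m}$, which your ``min'' would supply, is false when $p>p_\ast$. After allocation across levels it would give $n^{-A_m}$ up to logarithms, contradicting the lower bound with $G_{p,m}<A_m$. The angular rate needs $\ell^{p_\ast}$ control of the level coefficients.

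Joint $d$-dimensional stratified sampling of the whole representing measure gives only $n^{-\gamma_m}$, with $\gamma_m=\frac12+\beta_m/d<G_{p,m}$. Discarding small coefficients cannot improve this, because its error is governed by the total variation mass rather than by the number of atoms. Nor can the bias be ``integrated exactly'' with finitely many atoms. The paper's atom oracle instead balances $\gtrsim h^{-(d-1)}$ angular cells against a high-order weighted bias cubature.

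The paper's actual argument interpolates. On each level it splits the coefficients disjointly into two parts. The first part is the $K\simeq t^{-1/(1/p_\ast-1)}$ largest coefficients, whose $\ell^{p_\ast}$ norm is $\lesssim t^{\theta-1}\|\lambda_j\|_{\ell^p}$; these are treated by the $p=p_\ast$ endpoint construction at rate $n^{-A_m}L_n^{\kappa_\ast}$. The second part is a tail whose synthesis has $B_{1,q}^{k+d}$ norm $\lesssim t^\theta$; it is treated by the Siegel--Xu discretization at rate $n^{-\gamma_m}$. The choice $t=n^{-(A_m-\gamma_m)}L_n^{\kappa_\ast}$ then gives $n^{-G_{p,m}}L_n^{\theta\kappa_\ast}$ with $\theta=(1/p-1)/(1/p_\ast-1)$. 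This is the real source of that logarithmic power, not a rescaled count of active levels. Disjointness of the split is what keeps the budget fixed.

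\textbf{Direction--bias lower bound.} Here a neuron is not confined to $O(1)$ cells. Pushing the summed Gevrey tails below the target level requires an exclusion radius $R_hh$ with $R_h\simeq(\log(1/h))^{1/\vartheta}$, hence $O(R_h^{d-1})$ cells per neuron. The bumps must also be spaced by $\simeq R_hh$ to suppress inter-bump interference. These two losses produce exactly $L_n^{-\tau_m/\vartheta}$. You also need an $L^2$-bounded analysis transform, the paper's $A_h$, to turn cellwise lower bounds into an $H^m$ lower bound.
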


The theorem separates two geometric regimes.  When
$p\le p_\ast$, the direction sphere is the limiting parameter set and
the angular exponent $A_m$ is sharp.  When $p>p_\ast$, the hard family
uses both directions and biases, and the exponent changes to $G_{p,m}$.  The
logarithmic factors measure the cost of balancing the dyadic scales in the
upper construction and excluding the packet tails in the lower construction;
they do not change the algebraic exponent. At $p=p_\ast$ the two
bounds match exactly if $q\le(A_m+1/2)^{-1}$. Outside this endpoint
subrange and the strict angular regime, the theorem does not assert
optimality of the logarithmic factors.

\paragraph{Comparison with adaptive finite elements.}
Theorem~\ref{thm:main-sharp} permits a direct comparison with fixed-degree
finite elements. Consider shape-regular isotropic simplicial meshes and
polynomial degree $k$. If $N$ denotes the number of elements, then $N$ is
proportional to the number of degrees of freedom when $d$ and $k$ are fixed.
Classical local polynomial approximation imposes the degree-limited
benchmark
\[
 N^{-(k+1-m)/d}
\]
in $H^m(\Omega)$, or in the corresponding broken norm when the finite element
space is not $H^m$-conforming \cite{Ciarlet2002,LinXieXu2014}. Besov and
 multilevel characterizations identify the targets that attain a prescribed
best-mesh rate \cite{BinevDahmenDeVorePetrushev2002}, but they do not remove
the local polynomial degree barrier.
The critical class in the theorem embeds continuously into
$H^{k+1}(\Omega)$ because
$B_{p,q}^{k+d/p}\hookrightarrow B_{2,q}^{k+d/2}$, $q\le1\le2$, and
$k+d/2>k+1$ for $d\ge3$. Consequently, the fixed-degree benchmark applies
and becomes $N^{-k/d}$ in the $H^1$ energy norm. Rate-optimal AFEM theory
compares an adaptive algorithm with its best-mesh approximation class,
usually for the energy error together with data oscillation
 \cite{BinevDahmenDeVore2004,Stevenson2005,Stevenson2007,CasconEtAl2008}.
For mixed formulations, quasi-optimal adaptive methods controlling the natural
variational norm provide a complementary benchmark
\cite{Li2021Natural}.  These algorithmic results compare an AFEM output with
its best admissible mesh class; they do not compare that mesh class with a
ridge dictionary.

Up to the logarithmic factors in cases (ii) and (iii), the exponent in
Theorem~\ref{thm:main-sharp} is $\Gamma_{p,m}$. Both branches defining this
exponent exceed the fixed-degree finite element value. Indeed,
\[
 A_m-\frac{k+1-m}{d}
 =\frac{d(d-2)/2+k-m+1}{d(d-1)}>0,
\]
and
\[
 G_{p,m}-\frac{k+1-m}{d}
 =\frac{(d-3)/2+1/p}{d}>0.
\]
The logarithmic factors do not change this algebraic comparison because
every fixed power of $\log n$ is dominated by $n^\varepsilon$. Hence, after
identifying $n$ neurons and $N$ finite element degrees of freedom only at the
level of fixed-dimensional parameter counts, the best shallow-network error
has a strictly larger algebraic exponent on the critical Besov ball. This is
a comparison of representation classes, not a claim that a trained network
solves a PDE faster than an AFEM code. It excludes optimization, quadrature,
conditioning, solver work, and error certification, and it does not cover
anisotropic or $hp$-adaptive finite elements. A recent bit-complexity
analysis further cautions that a favorable width exponent need not imply a
smaller finite-precision information cost \cite{MaoXuBit2026}.

\paragraph{Comparison with related shallow-network results.}
Siegel and Xu first obtained dimension-independent shallow-network rates for
general activations and improved the sampling exponent under additional
regularity by stratification \cite{SiegelXu2020}.  They later obtained
high-order $H^m$ approximation rates for spectral
Barron classes and $\operatorname{ReLU}^k$ dictionaries
\cite{SiegelXu2022}. That construction leaves open whether its high-order
rate persists under a uniform output-layer $\ell^1$ bound.
Theorem~\ref{thm:main-sharp} imposes such a budget, although on a different
Besov source class. A common dictionary-variation framework for neural
representation spaces is developed in \cite{SiegelXu2023}, while sharp
approximation, entropy, and width estimates for smooth dictionaries appear in
\cite{SiegelXu2024}.  The weighted variation spaces of DeVore, Nowak, Parhi,
and Siegel make the representation model intrinsic to a bounded domain while
preserving shallow-ReLU approximation rates
\cite{DeVoreNowakParhiSiegel2025}.  This domain-dependent construction is
distinct from the compact normalized $\operatorname{ReLU}^k$ dictionary used
here. Uniform-norm and derivative estimates for variation
balls are studied in \cite{MaSiegelXu2022,Siegel2025}. Optimal shallow
$\operatorname{ReLU}^k$ rates on classical smoothness classes appear in
\cite{YangZhou2025}, while Radon-transform methods yield nearly optimal
Sobolev approximation rates over a broad range of indices in
\cite{MaoSiegelXu2026}. A general account of approximation, stability, and
rate-distortion questions for neural-network manifolds is given in
\cite{DeVoreHaninPetrova2021}.

Entropy-based estimates for orthogonal greedy approximation provide an
algorithmic route from the metric entropy of a symmetric convex hull to a
convergence rate \cite{LiSiegel2024}.  The present theorem instead determines
the best $n$-term error over a Besov source ball.  It therefore supplies a
benchmark for, but not a convergence theorem about, greedy training.

A complementary approach fixes the inner parameters and optimizes only the
outer coefficients. Liu, Mao, and Xu prove the rate $n^{-(r-m)/d}$ in $H^m$
for deterministic well-distributed directions and biases, with a logarithmic
loss for independent uniform sampling \cite{LiuMaoXu2025}. Applied to the
present source class only through
$B_{p,q}^{k+d/p}(\Omega)\hookrightarrow H^{k+d/2}(\Omega)$, their theorem
gives $n^{-\tau_m/d}$. In this specialization, the coefficient budget in
their construction grows like $n^{1/(2d)}$, whereas
Theorem~\ref{thm:main-sharp} gives
$\Gamma_{p,m}>\tau_m/d$ with a budget independent of $n$. This comparison
isolates the information carried by $p<1$ Besov sparsity, which allows
spatially concentrated targets without the additional half Sobolev derivative
needed to obtain the same exponent from the linearized Sobolev result. The
different source classes preclude a universal dominance claim. On the sphere,
Mao and Xu prove saturation above the critical Sobolev index for quasi-uniform
linearized $\operatorname{ReLU}^k$ schemes \cite{MaoXuSaturation2025}. This
fixed-center conclusion is not a lower bound for the adaptive dictionary in
Theorem~\ref{thm:main-sharp}.

\paragraph{Contribution and proof structure.}
The finite-width theorem has three technical components. First, the upper
construction preserves a coefficient budget independent of $n$. Second, a
radial angular obstruction and a localized Gevrey-packet cell-counting
argument identify the algebraic phase transition. Third, every polynomial
of degree at most $k$ is realized by a fixed number of dictionary atoms, so
the augmented and non-augmented formulations have the same algebraic upper
rate.

The proof has an upper and a lower component.  For the upper bound, a
critical wavelet expansion reduces the problem to approximating localized
unit-amplitude atoms, after which a width allocation argument produces one
network with a coefficient budget independent of $n$.  For the lower bound,
a radial comparison theorem gives the angular obstruction, while a localized
Gevrey packet transform converts direction--bias separation into an
$H^m$ error bound.

The rest of the paper follows the upper--lower logic of the theorem.
Section~\ref{sec:upper-lemmas} develops the common analytic ingredients for
the upper estimate, and Section~\ref{sec:upper-proof} assembles them into a
stable network in the three regimes $p<p_\ast$,
$p=p_\ast$, and $p>p_\ast$.  Section~\ref{sec:lower-lemmas}
collects the angular and Gevrey localization tools for the lower estimate.
Section~\ref{sec:lower-proof} then proves the two lower branches and completes
the proof of Theorem~\ref{thm:main-sharp}.

\section{Lemmas for the upper bound}\label{sec:upper-lemmas}

This section isolates the ingredients that are common to all three upper
regimes.  We first absorb the finite-dimensional polynomial correction and
record the critical scaling identities.  The Besov--variation embedding and
its localized Fourier--Radon proof are invoked from
\cite{LiWangSIMA2026}; we retain only the compact wavelet formulation and
the discretization steps needed for a coefficient budget independent of the
width.  The case-dependent allocation of neurons is postponed to
Section~\ref{sec:upper-proof}.

\paragraph{Compatibility of variation norms.}

\begin{lemma}[Compatibility of the $L^2$ and $H^m$ variation spaces]
\label{lem:variation-topology}
The parameter map
\[
 T:\Theta:=\mathbb S^{d-1}\times[-c,c]\longrightarrow H^m(\Omega),
 \qquad T(\omega,b)=\sigma_k(\omega\cdot x-b),
\]
is continuous and uniformly bounded.  Every finite signed Borel measure on
$\Theta$ therefore defines an $H^m$ Bochner integral.  Its image in
$L^2(\Omega)$ is the same integral taken in $L^2$.  Consequently the
$L^2$ and $H^m$ definitions of $\mathcal L_1(\mathbb D)$ have the same
functions, representing measures, and variation norms, and
\[
 \|f\|_{H^m(\Omega)}\lesssim\|f\|_{\mathcal L_1(\mathbb D)}.
\]
\end{lemma}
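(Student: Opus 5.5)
The plan is to verify continuity of $T$ directly and then obtain the remaining claims from general facts about Bochner integrals.

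\textbf{Continuity and boundedness of $T$.} For $|\alpha|\le m\le k$ we have
\[
 D^\alpha\sigma_k(\omega\cdot x-b)=\frac{k!}{(k-|\alpha|)!}\,\omega^\alpha\,\sigma_{k-|\alpha|}(\omega\cdot x-b),
\]
where $\sigma_0$ is the Heaviside function $\mathbf 1_{t>0}$. This formula holds in the weak sense even when $|\alpha|=k$, because $\sigma_k\in C^{k-1}$ and its $(k-1)$st derivative is Lipschitz.

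Since $\Omega$ is bounded and $|\omega|=1$, $|b|\le|c|$, each derivative is bounded pointwise by a constant. This gives uniform boundedness of $T$ in $H^m(\Omega)$.

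For continuity, fix $(\omega_j,b_j)\to(\omega,b)$.
\begin{itemize}
\item If $|\alpha|<k$, the integrand converges uniformly on $\bar\Omega$.
\item If $|\alpha|=k$, the Heaviside factors converge pointwise off the hyperplane $\{\omega\cdot x=b\}$. That hyperplane has Lebesgue measure zero, so dominated convergence applies.
\end{itemize}
The factor $\omega_j^\alpha\to\omega^\alpha$ trivially. Hence $T(\omega_j,b_j)\to T(\omega,b)$ in $H^m$. This Heaviside step is the only delicate point, and the null-hyperplane argument settles it.

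\textbf{Bochner integrability.} $\Theta$ is a compact metric space, so $T$ has separable range and is Borel measurable. Being bounded, it is Bochner integrable against $|\mu|$ for every finite signed Borel measure $\mu$. The estimate $\|\int T\,d\mu\|_{H^m}\le\|\mu\|_{\mathrm{TV}}\sup_\Theta\|T\|_{H^m}$ follows.

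\textbf{Same integral in $L^2$.} The inclusion $\iota:H^m(\Omega)\to L^2(\Omega)$ is bounded and linear, and bounded linear maps commute with Bochner integrals. Therefore $\iota\int T\,d\mu=\int\iota T\,d\mu$, so the $L^2$ integral is the image of the $H^m$ integral.

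\textbf{Equality of the two variation spaces.} Suppose $f\in L^2$ has an $L^2$-representation $f=\int\iota T\,d\mu$. By the previous step, $f$ equals the $H^m$ function $\int T\,d\mu$ almost everywhere. Conversely, every $H^m$-representation is also an $L^2$-representation. Thus both definitions have the same representing measures for the same functions, and taking the infimum of $\|\mu\|_{\mathrm{TV}}$ gives equal norms.

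\textbf{Norm bound.} Taking the infimum over $\mu$ in the estimate from the Bochner step yields $\|f\|_{H^m}\lesssim\|f\|_{\mathcal L_1(\mathbb D)}$.
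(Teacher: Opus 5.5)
Your proposal is correct and follows essentially the same route as the paper's proof. Both arguments use the weak-derivative formula with the Heaviside convention at order $k$, get continuity from dominated convergence off the null hyperplane, invoke Bochner integrability of the bounded continuous map $T$, and use that the injective inclusion $H^m(\Omega)\hookrightarrow L^2(\Omega)$ commutes with the integral. (The paper's proof also shows, by weak-star compactness, that variation balls are closed in $H^m$; later sections use this, but the stated lemma does not require it.)
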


\begin{proof}
For $|\alpha|\le m$, the weak spatial derivative of a ridge is
\[
 D_x^\alpha T(\omega,b)
 =\frac{k!}{(k-|\alpha|)!}\omega^\alpha
   (\omega\cdot x-b)_+^{k-|\alpha|},
\]
where exponent zero means $\mathbf1_{\{\omega\cdot x>b\}}$.
These derivatives are uniformly bounded on the bounded domain and compact
parameter set.  When $(\omega_n,b_n)\to(\omega,b)$ they converge pointwise
outside the hyperplane $\omega\cdot x=b$, a null set.  Dominated convergence
gives convergence in $L^2$ for every derivative, hence continuity in $H^m$.

The bounded continuous map $T$ is Bochner integrable against any finite
measure.  The continuous inclusion $H^m(\Omega)\hookrightarrow L^2(\Omega)$
commutes with this integral and is injective.  Thus an $L^2$ representation
automatically represents the same function in $H^m$, and conversely.
Finally, $\|\int T\,d\mu\|_{H^m}\le\sup_\Theta\|T\|_{H^m}\|\mu\|_{\rm TV}$;
taking the infimum proves the norm bound.

We also record that a ball of representing measures of bounded total
variation has compact image in $H^m$. By weak-star compactness, any
sequence of such measures has a weak-star convergent subsequence on
the compact metric space $\Theta$. Approximate the continuous map $T$
uniformly by a finite sum $\sum_i\rho_i(\theta)T(\theta_i)$, where
$\rho_i$ is a continuous partition of unity subordinate to a sufficiently
fine finite cover of $\Theta$. The integrals of this finite-rank map
converge in $H^m$, and the uniform approximation error is bounded by
the common measure mass times the approximation tolerance. Letting that
tolerance tend to zero proves norm convergence of the original integrals.
The limiting measure has no larger total variation. In particular,
variation balls are closed in $H^m$.
\end{proof}

For comparison with the symmetric bias interval in
\cite{LiWangSIMA2026}, put $R_\Omega:=\sup_{x\in\Omega}|x|$ and choose
$B$ with $R_\Omega<B<\min\{-c,d_0\}$.  The companion dictionary with
biases in $[-B,B]$ is contained in $\mathbb D$.  Its representing measures
can be extended by zero to $\Theta$ without increasing their total
variation, and Lemma~\ref{lem:variation-topology} upgrades those
representations from $L^2$ to $H^m$.

\subsection{Scaling and wavelet reduction}\label{sec:scaling}

The upper argument first reduces the critical Besov ball to localized smooth
atoms and then discretizes their ridge representations.  A finite-dimensional
polynomial term may appear in this reduction.  For comparison, define
\[
 \widehat\Sigma_{n,M}
 :=\left\{P+g:P\in\Pi_k,\ g\in\Sigma_{n,M}\right\}.
\]
Write $\widehat\sigma_{n,m}$ and $\widehat E_{n,m}$ for the corresponding
errors.  Since $\Sigma_{n,M}\subset\widehat\Sigma_{n,M}$,
\[
 \sigma_{n,m}(f;M)\ge\widehat\sigma_{n,m}(f;M),
 \qquad
 E_{n,m}(B_X;M)\ge\widehat E_{n,m}(B_X;M).
\]

\paragraph{Polynomial corrections.}

The lower proofs use no free polynomial. For the upper construction,
the following lemma realizes a polynomial correction by finitely many
dictionary atoms with a controlled coefficient mass.

\begin{lemma}[Finite realization of $\Pi_k$ by elements of $\mathbb D$]\label{lem:poly-realization}
Assume that $\Omega$ is bounded.  There exist an integer $J=J(d,k,\Omega)$, fixed parameters $(\omega_j,b_j)\in\Sph^{d-1}\times[-c,c]$, $1\le j\le J$, and a bounded linear map
\[
T:\Pi_k\longrightarrow\R^J,
\qquad
T(P)=(c_1(P),\ldots,c_J(P)),
\]
such that for every $P\in\Pi_k$,
\[
P(x)=\sum_{j=1}^{J}c_j(P)(\omega_j\cdot x-b_j)_+^k,
\qquad x\in\Omega,
\]
and
\[
\sum_{j=1}^{J}|c_j(P)|\lesssim \|P\|_{H^m(\Omega)}.
\]
\end{lemma}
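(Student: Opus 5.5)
Since the bias range satisfies $c<\inf_{x\in\Omega,|\omega|=1}\omega\cdot x-1$, we have $\omega\cdot x-b\ge\omega\cdot x-c>1$ for every $x\in\Omega$ whenever $b=c$. Note that $c$ is negative here, since $\inf\omega\cdot x\le 0$ would otherwise be contradicted only for special domains; in any case $c<\omega\cdot x-1$. Hence the atom $(\omega\cdot x-c)_+^k=(\omega\cdot x-c)^k$ is an honest polynomial on $\Omega$: the ReLU truncation is inactive. The plan is to work only with such ``always active'' atoms. Then the lemma reduces to a finite-dimensional linear algebra statement: the span of $\{(\omega\cdot x-c)^k:\omega\in\Sph^{d-1}\}$, restricted to $\Omega$, is all of $\Pi_k$.

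To show this, expand by the binomial theorem:
\[
(\omega\cdot x-c)^k=\sum_{j=0}^k\binom kj(-c)^{k-j}(\omega\cdot x)^j .
\]
It is classical that the powers $(\omega\cdot x)^j$, $\omega\in\Sph^{d-1}$, span the homogeneous polynomials $\mathcal H_j$ of degree $j$. Indeed, a functional on $\mathcal H_j$ annihilating all $(\omega\cdot x)^j$ corresponds, via the apolar pairing $\langle x^\alpha,x^\beta\rangle=\alpha!\delta_{\alpha\beta}$, to a homogeneous polynomial $Q$ with $Q(\omega)=0$ on the sphere; by homogeneity $Q\equiv0$. To separate the different degrees $j$, I would vary the bias or scale instead of only $\omega$. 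The cleanest route is to use several biases $b_1,\dots,b_{k+1}$, all in $[-c,c]$ and all below $\inf\omega\cdot x$ on $\Omega$; for instance $b_i=c+(i-1)\eta$ with $\eta$ small, or simply distinct values in $[c,c+\delta]$ with $c+\delta<\inf\omega\cdot x$. The expansion $(\omega\cdot x-b_i)^k=\sum_j\binom kj(-b_i)^{k-j}(\omega\cdot x)^j$ has a Vandermonde coefficient matrix in the variables $(-b_i)$, which is invertible when the $b_i$ are distinct. Each $(\omega\cdot x)^j$, $0\le j\le k$, is therefore a fixed linear combination of $k+1$ active atoms. Choosing finitely many directions $\omega$ whose $j$-th powers span $\mathcal H_j$ for every $j\le k$ (a generic set of $\dim\mathcal H_k$ directions works for all $j$ simultaneously, since spanning is an open nonvanishing-determinant condition) gives a finite family $(\omega_j,b_j)_{j\le J}$ whose restrictions span $\Pi_k$.

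Now fix a basis of $\Pi_k$, choose for each basis element one representing coefficient vector, and extend linearly to define $T$. Any right inverse of the surjective map $\R^J\to\Pi_k$ will do, for instance the minimal Euclidean-norm one. Then $\sum_j|c_j(P)|\le\|T\|\,\|P\|_*$ for any norm $\|\cdot\|_*$ on $\Pi_k$. Since $\Omega$ has nonempty interior, $\|P\|_{H^m(\Omega)}$ (indeed even $\|P\|_{L^2(\Omega)}$) is a norm on the finite-dimensional space $\Pi_k$. All norms on that space are equivalent, which gives $\sum_j|c_j(P)|\lesssim\|P\|_{H^m(\Omega)}$ with constant depending on $d,k,\Omega$.

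The main point to verify is the availability of $k+1$ distinct admissible biases that keep every atom active on $\Omega$ for all directions. This needs an interval of biases in $[-c,c]$ strictly below $\inf_{x,\omega}\omega\cdot x$, which the choice of $c$ guarantees with margin $1$. Everything else is the classical spanning fact plus norm equivalence in finite dimensions.
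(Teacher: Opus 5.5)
Your argument is correct and is essentially the paper's proof. You take $k+1$ distinct biases in the admissible range strictly below $\inf_{x\in\Omega,|\omega|=1}\omega\cdot x$, so every atom is an untruncated polynomial on $\Omega$; a Vandermonde inversion in the $b_i$ recovers the ridge monomials $(\omega\cdot x)^\ell$; finitely many rank-one powers span each $\mathcal H_\ell$; and a right inverse of the synthesis map with norm equivalence on $\Pi_k$ gives the $\ell^1$ bound. The one thing to discard is your opening claim that the single bias $b=c$ already suffices, because with unit directions it does not. For instance, when $k=2$ the nonzero functional with $L(1)=1$, $L|_{\mathcal H_1}=0$, $L(x_i^2)=-c^2$ and $L(x_ix_j)=0$ for $i\ne j$ annihilates every $(\omega\cdot x-c)^2$ because $|\omega|=1$, which is exactly why the several-bias Vandermonde step you then adopt is needed.
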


\begin{proof}
Fix a unit vector $\omega$.  Since $\Omega$ is bounded, there exists $B_\Omega>0$ such that
$|\omega\cdot x|\le B_\Omega$ for all $x\in\Omega$ and all $\omega\in\Sph^{d-1}$.  Choose $k+1$ distinct numbers
\[
 b_0,\ldots,b_k\in
 \left(c,\inf_{x\in\Omega,\,|\omega|=1}\omega\cdot x\right).
\]
Then $\omega\cdot x-b_j>0$ for every $x\in\Omega$, hence
\[
(\omega\cdot x-b_j)_+^k=(\omega\cdot x-b_j)^k
=\sum_{\ell=0}^{k}\binom{k}{\ell}(-b_j)^{k-\ell}(\omega\cdot x)^\ell.
\]
For fixed $\omega$, the coefficient matrix in the variables
$1,(\omega\cdot x),\ldots,(\omega\cdot x)^k$ is a Vandermonde matrix in the distinct numbers $-b_j$ and is therefore invertible.  Consequently every ridge monomial $(\omega\cdot x)^\ell$, $0\le\ell\le k$, is a linear combination of the $k+1$ always-active $\operatorname{ReLU}^k$ ridges above.

For a fixed degree $\ell$, the homogeneous polynomial space $\mathcal H_\ell$ is naturally identified with the symmetric tensor space $\operatorname{Sym}^\ell(\R^d)$.  Rank-one symmetric tensors $\omega^{\otimes\ell}$ span this finite-dimensional space: if a symmetric tensor $A$ were orthogonal to every $\omega^{\otimes\ell}$, then the homogeneous polynomial $x\mapsto\langle A,x^{\otimes\ell}\rangle$ would vanish for every $x$ and hence $A=0$.  Thus one may choose finitely many directions $\omega_{\ell,1},\ldots,\omega_{\ell,N_\ell}$ such that $(\omega_{\ell,r}\cdot x)^\ell$ spans $\mathcal H_\ell$.  Taking the union over $0\le\ell\le k$ and replacing every ridge monomial by the Vandermonde representation above yields a fixed finite family of $\operatorname{ReLU}^k$ ridges spanning $\Pi_k|_\Omega$.

Let $S:\R^J\to\Pi_k$ be the resulting surjective linear synthesis map.  Since both spaces are finite-dimensional, $S$ has a linear right inverse $T$, and all norms are equivalent.  Hence
\[
\|T(P)\|_{\ell^1}\lesssim \|P\|_{H^m(\Omega)}.
\]
This proves both assertions.
\end{proof}

The preceding realization controls the polynomial coefficient cost.
To use it uniformly in the width, we also need a bound for the size of
the polynomial appearing in an augmented approximation.

\begin{proposition}[Transfer of a stable augmented upper estimate to the dictionary class]\label{prop:pure-transfer}
Suppose that for some $\gamma>0$, $\kappa\ge0$ and every $f$ in a quasi-normed unit ball $B_X$ there exists, for every integer $N\ge1$, an approximant
\[
\widetilde g_N=P_N+\sum_{r=1}^{N}a_r
\sigma_k(\omega_r\cdot x-b_r),
\qquad P_N\in\Pi_k,\quad
(\omega_r,b_r)\in\Sph^{d-1}\times[-c,c],
\]
satisfying
\[
\|f-\widetilde g_N\|_{H^m(\Omega)}\lesssim  N^{-\gamma}(1+\log(2+N))^\kappa,
\qquad
\sum_{r=1}^{N}|a_r|\le M_0,
\]
and assume $X\hookrightarrow H^m(\Omega)$.  Then there is a fixed $\widetilde M_0<\infty$ such that, for every integer $n\ge1$,
\[
E_{n,m}(B_X;\widetilde M_0)
\lesssim n^{-\gamma}(1+\log(2+n))^\kappa.
\]
\end{proposition}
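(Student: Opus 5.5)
The plan is to absorb the polynomial $P_N$ into finitely many dictionary atoms using Lemma~\ref{lem:poly-realization}. For this we must show that $\|P_N\|_{H^m(\Omega)}$ is bounded uniformly in $N$ and $f$. Granting that bound, the realization lemma gives $P_N=\sum_{j=1}^J c_j(P_N)\sigma_k(\omega_j\cdot x-b_j)$ on $\Omega$ with $\sum_j|c_j(P_N)|\lesssim\|P_N\|_{H^m}\le C_P$. Then
\[
 g_N:=\sum_{j=1}^J c_j(P_N)\sigma_k(\omega_j\cdot x-b_j)+\sum_{r=1}^N a_r\sigma_k(\omega_r\cdot x-b_r)
\]
lies in $\Sigma_{N+J,\widetilde M_0}$ with $\widetilde M_0:=M_0+C_P'$. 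Moreover, $g_N=\widetilde g_N$ exactly on $\Omega$, so the error is unchanged.

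\textbf{Uniform bound on the polynomial.} We use the triangle inequality,
\[
 \|P_N\|_{H^m}\le\|f-\widetilde g_N\|_{H^m}+\|f\|_{H^m}+\Bigl\|\sum_r a_r\sigma_k(\omega_r\cdot x-b_r)\Bigr\|_{H^m}.
\]
We bound the three terms separately.
\begin{enumerate}[label=(\alph*)]
\item The first term is $\lesssim N^{-\gamma}(1+\log(2+N))^\kappa$. This sequence is bounded over $N\ge1$ because $\gamma>0$.
\item The second term is $\lesssim\|f\|_X\le 1$ by the assumed embedding $X\hookrightarrow H^m(\Omega)$.
\item The third term is at most $M_0\sup_{\Theta}\|T(\omega,b)\|_{H^m}$, which is a finite constant by the uniform boundedness in Lemma~\ref{lem:variation-topology}.
\end{enumerate}
Hence $\|P_N\|_{H^m}\le C_P$, with $C_P$ independent of $N$ and $f$. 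This step is the only real content, and it is where the hypotheses are used: the $\ell^1$ budget, the embedding, and $\gamma>0$. I expect it to be the main (though mild) obstacle.

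\textbf{Reindexing and small $n$.} For $n\ge J+1$, take $N=n-J\ge1$. Since $J$ is fixed, $N\simeq n$ for, say, $n\ge 2J$. Therefore
\[
 N^{-\gamma}(1+\log(2+N))^\kappa\lesssim n^{-\gamma}(1+\log(2+n))^\kappa.
\]
For the finitely many remaining $n\le 2J$, use the zero approximant, which lies in $\Sigma_{n,\widetilde M_0}$. This gives $\sigma_{n,m}(f;\widetilde M_0)\le\|f\|_{H^m}\lesssim 1$, which is $\lesssim n^{-\gamma}(1+\log(2+n))^\kappa$ with a constant depending only on $J,\gamma,\kappa$. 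Taking the supremum over $f\in B_X$ yields the claim.
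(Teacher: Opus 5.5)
Your proposal is correct and follows essentially the same argument as the paper: a uniform bound on $\|P_N\|_{H^m(\Omega)}$ from the triangle inequality, the embedding $X\hookrightarrow H^m(\Omega)$, and the bounded $H^m$ norms of dictionary atoms; exact realization of $P_N$ by $J$ atoms via Lemma~\ref{lem:poly-realization}; and reindexing with $N=n-J$. Your use of the zero approximant for the finitely many small $n$ just spells out what the paper calls absorbing those widths into the constant.
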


\begin{proof}
The parameter set $\Sph^{d-1}\times[-c,c]$ is compact and $0\le m\le k$, hence
\[
\sup_{(\omega,b)\in\mathbb S^{d-1}\times[-c,c]}
\|\sigma_k(\omega\cdot x-b)\|_{H^m(\Omega)}<\infty.
\]
For $f\in B_X$ the embedding gives $\|f\|_{H^m}\lesssim 1$.  Thus
\[
\|P_N\|_{H^m}
\le \|f-\widetilde g_N\|_{H^m}+\|f\|_{H^m}
 +\left\|\sum_{r=1}^{N}a_r
 \sigma_k(\omega_r\cdot x-b_r)\right\|_{H^m}
\lesssim (1+M_0),
\]
uniformly in $N$ and $f$.  Lemma~\ref{lem:poly-realization} represents $P_N$ by at most $J$ dictionary atoms with total coefficient mass bounded by a fixed constant $M_\Pi$.  Hence $\widetilde g_N$ is exactly a dictionary network of width at most $N+J$ and budget at most $M_0+M_\Pi=:\widetilde M_0$.

Given $n>2J$, choose $N=n-J$.  Then $N\simeq n$ and $\log(2+N)\simeq\log(2+n)$, which yields the asserted rate.  The finitely many $n\le2J$ are absorbed into the constant.
\end{proof}

\paragraph{Critical bump scaling and Besov estimates.}

The lower constructions and the fine-scale remainder in the upper estimate
use the same critical rescaling.  We record its Besov and Sobolev behavior
once here so that every later width calculation reduces to counting bumps
and parameter cells.

Fix $x_0\in\Omega$ and $r_0>0$ with $B(x_0,4r_0)\Subset\Omega$.  Let $0\ne\psi\in C_c^\infty(B(0,1))$ and for dyadic $0<h<r_0$ define
\begin{align}\label{eq:psih-def}
    \psi_h(x):=h^k\psi\left(\frac{x-x_0}{h}\right).
\end{align}

\begin{lemma}[Uniform critical Besov norm]\label{lem:single-besov}
For $s_0=k+d/p$, $0<p<1$ and $0<q\le1$,
\[
\sup_{0<h<r_0}\|\psi_h\|_{B_{p,q}^{s_0}(\Omega)}<\infty.
\]
\end{lemma}

\begin{proof}
The general rescaled-bump argument is developed in
\cite[Section~4.1]{LiWangSIMA2026}. Its stated two-sided lemma uses
a specially chosen moment-cancelling bump. Here arbitrary smooth
bumps, including radial ones, are needed, so we give the upper bound
in this more general form.

Write $h=2^{-J}$ and choose an integer $L>s_0$. Let
$\Delta_jF=K_j*F$, where $K_j(x)=2^{jd}K(2^jx)$ for $j\ge1$,
$K$ is Schwartz, and every moment of $K$ vanishes. If $j\le J$,
the direct convolution formula and compact support of $\psi$ give
\[
 |\Delta_j\psi_h(x)|
 \lesssim h^{k+d}2^{jd}
       (1+2^j|x-x_0|)^{-M}.
\]
For $j=0$ the same bound holds with the fixed low-frequency kernel.
Choose $M>d/p$ and integrate its $p$th power. This yields
\[
 2^{js_0}\|\Delta_j\psi_h\|_p
 \lesssim (2^jh)^{k+d},\qquad 0\le j\le J.
\]
For $j>J$ set $\lambda=2^jh>1$ and use
\[
 \Delta_j\psi_h(x_0+hy)
 =h^k\int K(z)\psi(y-z/\lambda)\,dz.
\]
Taylor expansion of $\psi$ through degree $L-1$ and the moments of
$K$ give a remainder bounded by $C\lambda^{-L}|z|^L$.
If $y$ is outside a fixed enlargement of $\operatorname{supp}\psi$,
a nonzero derivative in that remainder requires
$|z|\gtrsim\lambda|y|$. Schwartz decay of $K$ therefore gives, for
arbitrarily large $M$,
\[
 |\Delta_j\psi_h(x_0+hy)|
 \lesssim h^k\lambda^{-L}(1+|y|)^{-M}.
\]
Taking the $L^p$ quasi-norm and multiplying by $2^{js_0}$ gives
\[
 2^{js_0}\|\Delta_j\psi_h\|_p
 \lesssim(2^jh)^{s_0-L},\qquad j>J.
\]
The two geometric sequences have a uniformly bounded $\ell^q$
quasi-norm because $k+d>0$ and $L>s_0$. Restriction to $\Omega$
proves the lemma.
\end{proof}

The source norm is now controlled independently of $h$. The next
identity determines the size of the same bump in the error norm.

\begin{lemma}[$H^m$ scaling]\label{lem:single-Hm}
For $0\le m\le k$,
\[
\|\psi_h\|_{H^m(\Omega)}\simeq h^{k-m+d/2}=h^{\tau_m}.
\]
\end{lemma}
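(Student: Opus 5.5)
The plan is to use the chain rule under the dilation $x=x_0+hy$ and then compare the seminorms across orders. For $|\alpha|=j\le m$ we have
\[
 D^\alpha\psi_h(x)=h^{k-j}(D^\alpha\psi)\Bigl(\frac{x-x_0}{h}\Bigr).
\]
Since $h<r_0$, the support of $\psi_h$ lies in $B(x_0,h)\subset B(x_0,4r_0)\Subset\Omega$, so every integral over $\Omega$ is an integral over $\R^d$. The substitution $x=x_0+hy$ then gives
\[
 \|D^\alpha\psi_h\|_{L^2(\Omega)}^2=h^{2(k-j)+d}\|D^\alpha\psi\|_{L^2(\R^d)}^2 .
\]
Summing over $|\alpha|\le m$ produces
\[
 \|\psi_h\|_{H^m(\Omega)}^2=\sum_{j=0}^m h^{2(k-j)+d}\,|\psi|_{H^j(\R^d)}^2 ,
\]
where $|\psi|_{H^j}$ denotes the seminorm of order $j$. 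These are fixed constants depending only on $\psi$.

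\textbf{Upper bound.} Because $h<r_0$, the factor satisfies $h^{2(k-j)+d}\le r_0^{2(m-j)}h^{2(k-m)+d}$ for every $j\le m$. Hence
\[
 \|\psi_h\|_{H^m}^2\lesssim h^{2\tau_m}\,\|\psi\|_{H^m}^2 .
\]

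\textbf{Lower bound.} Keep only the top-order term $j=m$. This gives
\[
 \|\psi_h\|_{H^m}^2\ge h^{2\tau_m}|\psi|_{H^m(\R^d)}^2 .
\]
It remains to show that $|\psi|_{H^m}>0$. A compactly supported function whose derivatives of order $m$ all vanish is a polynomial, and a compactly supported polynomial must be zero. Since $\psi\ne0$, the seminorm is positive. Equivalently, by Plancherel, $|\psi|_{H^m}^2\simeq\int|\xi|^{2m}|\widehat\psi|^2$, which is positive because $\widehat\psi\not\equiv0$.

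The only step needing any care is this nondegeneracy of the top seminorm; the rest is bookkeeping of the scaling exponents. The implied constants depend only on $\psi$, $m$, $k$, $d$ and $r_0$, and not on $h$.
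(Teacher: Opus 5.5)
Your proposal is correct and follows the paper's proof essentially step by step: the dilation formula $D^\alpha\psi_h=h^{k-|\alpha|}(D^\alpha\psi)((\cdot-x_0)/h)$ and a change of variables, an upper bound because the top order $|\alpha|=m$ carries the smallest power of $h$, and a lower bound from the nonvanishing order-$m$ seminorm of a nonzero compactly supported $\psi$. Bounding the lower-order terms with $h<r_0$ instead of the paper's $h\le1$ is a harmless, slightly more careful variant.
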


\begin{proof}
For each multi-index $\alpha$,
\[
D^\alpha \psi_h(x)
=h^{k-|\alpha|}(D^\alpha\psi)\left(\frac{x-x_0}{h}\right),
\]
whence the change of variables $y=(x-x_0)/h$ gives
\[
\|D^\alpha \psi_h\|_{L^2(\Omega)}
=h^{k-|\alpha|+d/2}\|D^\alpha\psi\|_{L^2(\Omega)}.
\]
Summing over $|\alpha|\le m$ yields the upper bound $Ch^{k-m+d/2}$, because $0<h\le1$ and the smallest exponent occurs at $|\alpha|=m$.  Since a nonzero compactly supported smooth function cannot have all derivatives of order $m$ equal to zero unless it is a polynomial of degree $<m$, hence identically zero, at least one $D^\alpha\psi$ with $|\alpha|=m$ is nonzero when $m\ge1$; for $m=0$ use $\psi\ne0$.  The corresponding term gives the matching lower bound.
\end{proof}

The angular obstruction uses one bump. The direction--bias obstruction
uses a row of bumps, for which the following normalization preserves
admissibility independently of their number and signs.

\begin{lemma}[Uniform Besov norm of an $\ell^p$-normalized row]\label{lem:row-besov}
For $k\in\N_+$, let $\left\{x_j\right\}_{j=1}^k\subset\Omega$ be arbitrary and
\begin{align}\label{eq:psihj-def}
    \psi_{h,j}(x):=h^k\psi\left(\frac{x-x_j}{h}\right)
\end{align}
for which are supported in a fixed compact subset of $\Omega$.  For arbitrary signs $\eps_j\in\{-1,1\}$ let
\[
F_{h,k}:=k^{-1/p}\sum_{j=1}^{K}\eps_j \psi_{h,j}.
\]
Then
\[
\|F_{h,k}\|_{B_{p,q}^{s_0}(\Omega)}\lesssim 1.
\]
\end{lemma}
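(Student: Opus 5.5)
The plan is to work on $\R^d$ with the zero extensions of the bumps and to exploit the $p$-subadditivity of the $L^p$ quasi-norm for $0<p<1$. I read the statement with $K$ bumps and normalization $K^{-1/p}$; the lower-case $k$ in the sum's prefactor and index range is evidently a typo for $K$. No separation of the centers $x_j$ will be needed. Since each $\psi_{h,j}$ is supported in a fixed compact subset of $\Omega$, its zero extension is a $C_c^\infty(\R^d)$ function. The restriction of $\widetilde F:=K^{-1/p}\sum_{j}\eps_j\psi_{h,j}$ (sum over extensions) to $\Omega$ is $F_{h,K}$. Hence $\|F_{h,K}\|_{B_{p,q}^{s_0}(\Omega)}\le\|\widetilde F\|_{B_{p,q}^{s_0}(\R^d)}$ by the definition of the domain quasi-norm as an infimum over extensions.

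Fix a Littlewood--Paley level $\ell\ge0$. The operator $\Delta_\ell$ is linear and commutes with translations. For $0<p<1$ we have $\|u+v\|_{L^p}^p\le\|u\|_{L^p}^p+\|v\|_{L^p}^p$. Therefore
\[
 \|\Delta_\ell\widetilde F\|_{L^p(\R^d)}^p
 \le K^{-1}\sum_{j=1}^{K}\|\Delta_\ell\psi_{h,j}\|_{L^p(\R^d)}^p
 =\|\Delta_\ell\Psi_h\|_{L^p(\R^d)}^p ,
\]
where $\Psi_h(x):=h^k\psi(x/h)$ is the bump centered at the origin. Each $\psi_{h,j}$ is a translate of $\Psi_h$, so all $K$ terms have the same $L^p$ quasi-norm; the signs $\eps_j$ and the normalization $K^{-1/p}$ then cancel the count exactly. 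Multiplying by $2^{\ell s_0}$ and taking the $\ell^q$ quasi-norm over $\ell$, I obtain $\|\widetilde F\|_{B_{p,q}^{s_0}(\R^d)}\le\|\Psi_h\|_{B_{p,q}^{s_0}(\R^d)}$.

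It remains to bound $\|\Psi_h\|_{B_{p,q}^{s_0}(\R^d)}$ uniformly in dyadic $0<h<r_0$. This is precisely what the proof of Lemma~\ref{lem:single-besov} establishes before its final restriction step. The two pointwise bounds there give $2^{\ell s_0}\|\Delta_\ell\Psi_h\|_p\lesssim(2^\ell h)^{k+d}$ for $2^\ell\le h^{-1}$ and $\lesssim(2^\ell h)^{s_0-L}$ otherwise. Both estimates are translation invariant and independent of the center, and they sum to a bound independent of $h$ in $\ell^q$.

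The only point requiring care, and the one I regard as the main (mild) obstacle, is to check that the estimates in Lemma~\ref{lem:single-besov} are genuinely global on $\R^d$ with constants independent of $x_0$, rather than merely estimates on $\Omega$. Inspection of that argument shows that it only uses the Schwartz kernel $K$, its vanishing moments, and the compact support of $\psi$, so this holds. Everything else reduces to the $p$-triangle inequality, which is exactly why the normalization $K^{-1/p}$ is the right one.
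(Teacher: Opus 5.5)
Your proposal is correct and is essentially the paper's own argument: $p$-subadditivity of $\|\cdot\|_{L^p}^p$ on each Littlewood--Paley block, together with translation invariance, reduces the row to a single bump, and the whole-space estimate from the proof of Lemma~\ref{lem:single-besov} finishes it. Your explicit handling of the zero extension versus the infimum over extensions, the center-independence of the single-bump bounds, and reading the lower-case $k$ as $K$ spells out points that the paper's short proof leaves implicit.
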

\begin{proof}
    For each Littlewood--Paley block $\Delta_i$, translation invariance
and $p$-subadditivity in $L^p(\R^d)$ give
\[
 \|\Delta_i F_{h,k}\|_{L^p(\R^d)}^p
 \le k^{-1}\sum_{j=1}^k\|\Delta_i \psi_{h,j}\|_{L^p(\R^d)}^p
 =\|\Delta_i \psi_h\|_{L^p(\R^d)}^p,
\]
taking the weighted outer $\ell^q$ quasi-norm and applying the
whole-space estimate in Lemma~\ref{lem:single-besov}
proves the assertion.
\end{proof}

\begin{lemma}[$H^m$ size of a disjoint row]\label{lem:row-Hm}
Assume in addition that the supports of the $u_{h,j}$ are pairwise disjoint. Then
\[
\|F_{h,k}\|_{H^m(\Omega)}\simeq k^{1/2-1/p}h^{\tau_m}.
\]
\end{lemma}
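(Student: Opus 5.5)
Since the supports of the $\psi_{h,j}$ are pairwise disjoint, I would compute the norm exactly rather than estimate it. For every multi-index $\alpha$ with $|\alpha|\le m$, the function $D^\alpha\psi_{h,j}$ is supported in $\operatorname{supp}\psi_{h,j}$. Hence the derivatives $D^\alpha\psi_{h,j}$, $j=1,\dots,k$, have pairwise disjoint supports as well, and the cross terms vanish in
\[
 \|D^\alpha F_{h,k}\|_{L^2(\Omega)}^2
 =k^{-2/p}\sum_{j=1}^{k}\eps_j^2\|D^\alpha\psi_{h,j}\|_{L^2(\Omega)}^2 .
\]
The signs disappear because $\eps_j^2=1$. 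This is the only point where disjointness enters, and it is where the result differs from the Besov bound of Lemma~\ref{lem:row-besov}, which held for arbitrary overlapping rows.

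Next I use translation invariance. Each $\psi_{h,j}$ is a translate of the single bump $\psi_h$ from \eqref{eq:psih-def}, and all supports lie in a fixed compact subset of $\Omega$. So the $L^2(\Omega)$ norms of $D^\alpha\psi_{h,j}$ agree with the whole-space norms and do not depend on $j$. Summing over $|\alpha|\le m$ then gives
\[
 \|F_{h,k}\|_{H^m(\Omega)}^2=k^{-2/p}\cdot k\,\|\psi_h\|_{H^m(\R^d)}^2 .
\]

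Finally I apply Lemma~\ref{lem:single-Hm}. It gives $\|\psi_h\|_{H^m}\simeq h^{\tau_m}$, and its proof is purely a change of variables, so it holds with $x_j$ in place of $x_0$. Taking square roots yields
\[
 \|F_{h,k}\|_{H^m(\Omega)}\simeq k^{1/2-1/p}h^{\tau_m}.
\]
The implicit constants are those of Lemma~\ref{lem:single-Hm} and are independent of $k$, $h$ and the signs.

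I do not expect a genuine obstacle. The statement contains a notational inconsistency: the $u_{h,j}$ should be read as the $\psi_{h,j}$, and the summation index $K$ as $k$, the number of bumps. I also need that the domain of integration does not truncate any bump, which the hypothesis that all supports lie in a fixed compact subset of $\Omega$ guarantees.
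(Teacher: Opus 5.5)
Your proof is correct and follows the paper's argument: disjointness of the supports of all derivatives removes the cross terms, so $\|D^\alpha F_{h,k}\|_{L^2}^2=k^{-2/p}\sum_j\|D^\alpha\psi_{h,j}\|_{L^2}^2$, and Lemma~\ref{lem:single-Hm} together with summation over $|\alpha|\le m$ gives the claim. Your reading of $u_{h,j}$ as $\psi_{h,j}$ and of $K$ as the number of bumps $k$ is the intended one.
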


\begin{proof}
All derivatives of the bumps have the same supports as the bumps, hence are pairwise disjoint.  Therefore, for every $|\alpha|\le m$,
\[
\left\|D^\alpha F_{h,K}\right\|_2^2
=K^{-2/p}\sum_{j=1}^{K}\|D^\alpha u_{h,j}\|_2^2.
\]
Using the scaling from Lemma~\ref{lem:single-Hm} and summing over $|\alpha|\le m$ gives
\[
\|F_{h,K}\|_{H^m}^2\simeq K^{1-2/p}h^{2\tau_m}.
\]
Taking square roots proves the result.
\end{proof}

\paragraph{Stable fixed-budget upper bound.}\label{sec:upper}

The next representation provides the coefficient bounds used in the
upper construction. The local approximation of one compactly supported
atom and the allocation argument are developed in
Section~\ref{sec:upper-proof}.

Put
\[
\gamma_m:=\frac12+\frac{\beta_m}{d},
\qquad
r_m:=k+1-m=\beta_m+\frac12.
\]
Recall
\[
A_m=\frac{\tau_m}{d-1},
\qquad
G_{p,m}=\gamma_m+\frac{1/p-1}{d}.
\]
Let
\[
\kappa_\ast:=\left(A_m+\frac12-\frac1q\right)_+.
\]

\paragraph{The critical wavelet coefficient model.}

\begin{lemma}[Compactly supported critical wavelet representation]
\label{lem:critical-wavelet-model}
Fix $0<p\le1$, $0<q\le1$, and a finite differentiability order $L$.
Every $f\in B_{p,q}^{k+d/p}(\Omega)$ admits a compactly supported extension
$F$ and an expansion
\[
 F=\sum_{j\ge0}\sum_{\nu\in\Lambda_j}\lambda_{j,\nu}a_{j,\nu}
 \quad\text{in }H^m(\R^d),
\]
where
\[
 h_j=2^{-j},\qquad
 a_{j,\nu}(x)=\psi_{e(j,\nu)}
 \left(\frac{x-x_{j,\nu}}{h_j}\right),
\]
the templates belong to one fixed finite family in $C_c^L(\R^d)$,
each $\Lambda_j$ is finite, and
\[
 b_j:=h_j^{-k}\|(\lambda_{j,\nu})_\nu\|_{\ell^p},
 \qquad
 \|(b_j)\|_{\ell^q}
 \lesssim \|f\|_{B_{p,q}^{k+d/p}(\Omega)}.
\]
Moreover,
\begin{align}
 \sum_{j,\nu}h_j^{-k}|\lambda_{j,\nu}|
 &\le \sum_jb_j\le\|(b_j)\|_{\ell^q},
 \label{eq:critical-global-l1}\\
 \left\|\sum_{j,\nu}\eta_{j,\nu}a_{j,\nu}\right\|_{H^m(\R^d)}^2
 &\lesssim \sum_{j,\nu}h_j^{2(\tau_m-k)}|\eta_{j,\nu}|^2
 \label{eq:critical-bessel}
\end{align}
for every finitely supported scalar family $(\eta_{j,\nu})$.
The same wavelet system can be chosen to satisfy, for
$u\in\{p,p_\ast,1\}$, the synthesis bound
\begin{equation}
 \left\|\sum_{j,\nu}\eta_{j,\nu}a_{j,\nu}\right\|_{B_{u,q}^{k+d/u}(\R^d)}
 \lesssim
 \left\|\left(h_j^{-k}\|(\eta_{j,\nu})_\nu\|_{\ell^u}\right)_j
 \right\|_{\ell^q}.
 \label{eq:critical-besov-synthesis}
\end{equation}
When the right-hand side is finite, the series converges in this Besov
space and in $H^m(\R^d)$.  Restriction to $\Omega$ preserves the bound.
Thus $a_{j,\nu}$ has unit amplitude and spatial scale $h_j$, while the
factor $h_j^{-k}=2^{jk}$ occurs in the coefficient norm and budget.
\end{lemma}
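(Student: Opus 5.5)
The plan is to build the expansion from a compactly supported Daubechies-type wavelet basis of $\R^d$ of regularity $C^L$ with $L$ larger than both $k+d/p$ and $m$, combined with Rychkov's universal extension operator. For $f\in B_{p,q}^{k+d/p}(\Omega)$, Rychkov's theorem \cite{Rychkov1999} provides an extension $\widetilde F\in B_{p,q}^{k+d/p}(\R^d)$ whose norm is comparable to that of $f$. We then multiply by a fixed smooth cutoff $\chi$ equal to one on a neighborhood of $\overline\Omega$. Pointwise multiplication by $\chi$ is bounded on $B_{p,q}^s(\R^d)$ for every $s$ and $0<p\le1$, so $F:=\chi\widetilde F$ is compactly supported. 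Only wavelets meeting $\operatorname{supp}F$ contribute, and therefore each $\Lambda_j$ is finite.

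Let $\phi$ denote the scaling function and $\psi^e$ the mother wavelets. Set $a_{j,\nu}(x)=\psi_e(2^jx-\nu)$, which we rewrite as $\psi_{e}((x-x_{j,\nu})/h_j)$ with $x_{j,\nu}=h_j\nu$. This is the $L^\infty$-normalized form, so $\lambda_{j,\nu}$ equals $2^{jd/2}$ times the usual $L^2$-normalized coefficient. For $0<p\le1$, Triebel's wavelet characterization \cite{Triebel1983} gives
\[
\|F\|_{B^{s}_{p,q}}\simeq\Bigl\|\bigl(2^{j(s-d/p)}\|(\lambda_{j,\nu})_\nu\|_{\ell^p}\bigr)_j\Bigr\|_{\ell^q},
\]
where the $L^\infty$ normalization absorbs the usual $2^{jd/2}$ factor. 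Taking $s=k+d/p$, the weight becomes $2^{jk}=h_j^{-k}$. This yields $\|(b_j)\|_{\ell^q}\lesssim\|f\|$. Applying the same characterization with $p$ replaced by $u\in\{p,p_\ast,1\}$, now read in the synthesis direction, gives \eqref{eq:critical-besov-synthesis}, including convergence in the Besov quasi-norm. This requires $L$ to exceed $\max_u(k+d/u)$; since $p\le p_\ast$ or $p\ge p_\ast$ either way, it suffices to take $L>k+d/\min\{p,p_\ast\}$.

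Inequality \eqref{eq:critical-global-l1} is elementary. Because $p\le1$, we have $\ell^p\subset\ell^1$ with $\|\cdot\|_{\ell^1}\le\|\cdot\|_{\ell^p}$, which gives the first inequality. Because $q\le1$, we have $\sum_jb_j\le\|(b_j)\|_{\ell^q}$, which gives the second.

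For \eqref{eq:critical-bessel} we use the Sobolev characterization of $H^m=B_{2,2}^m$ by the same wavelets, valid because $L>m$:
\[
\|\textstyle\sum\eta_{j,\nu}a_{j,\nu}\|_{H^m}^2\lesssim\sum_j2^{2jm}\sum_\nu|2^{-jd/2}\eta_{j,\nu}|^2.
\]
The right-hand side equals $\sum h_j^{2(\tau_m-k)}|\eta_{j,\nu}|^2$, since $\tau_m-k=d/2-m$. Convergence of the series for $F$ in $H^m$ follows from this bound combined with \eqref{eq:critical-global-l1}. Indeed, $h_j^{\tau_m-k}|\lambda_{j,\nu}|\le h_j^{\tau_m}\cdot h_j^{-k}|\lambda_{j,\nu}|$, and $\ell^1\subset\ell^2$, so the tails are square-summable. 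Alternatively, one can use the embedding $B^{k+d/p}_{p,q}\hookrightarrow H^m$.

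The coefficient-level $j=0$ scaling-function terms are included with template $\phi$, so the family of templates is finite, namely $2^d$ functions. Restriction to $\Omega$ is norm-nonincreasing by the definition of the quotient norm.

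The main obstacle is ensuring that the quasi-Banach wavelet characterization holds for $p<1$ with only finite smoothness $L$. This requires $L$ to exceed $\sigma_p=d(1/p-1)_+$ as well as $s$, and enough vanishing moments for the analysis direction. I would cite Triebel's theorem for Daubechies wavelets and choose $L$ large enough for all three values of $u$ simultaneously.
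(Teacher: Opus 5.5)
Your proposal is correct and follows essentially the same route as the paper: Rychkov extension multiplied by a cutoff, a Daubechies basis of sufficiently high finite regularity that simultaneously characterizes $B^{k+d/u}_{u,q}$ for $u\in\{p,p_\ast,1\}$ and $H^m$, the rescaling $\lambda_{j,\nu}=h_j^{-d/2}\alpha_{j,\nu}$, and the inclusions $\ell^p,\ell^q\subset\ell^1$. Two small points: the Daubechies wavelet characterization should be cited from \cite{Daubechies1992} and \cite{Triebel2006} rather than \cite{Triebel1983}, which predates compactly supported wavelets; and you should state explicitly that $H^m$ convergence of the synthesis series at each index $u$ follows from \eqref{eq:critical-bessel} by the same $\ell^1$ argument you gave for $F$.
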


\begin{proof}
The universal extension theorem for Besov spaces on bounded Lipschitz domains
\cite{Rychkov1999}
gives a bounded operator
$\mathcal E:B_{p,q}^{k+d/p}(\Omega)\to
B_{p,q}^{k+d/p}(\R^d)$.  Multiplying $\mathcal Ef$ by a fixed smooth cutoff
which equals one near $\overline\Omega$ gives a compactly supported extension
without changing the norm by more than a fixed factor.

Choose a compactly supported orthonormal tensor-product wavelet basis with
regularity at least $L$ and sufficiently many derivatives and vanishing
moments for the Besov characterizations at all three indices
$u\in\{p,p_\ast,1\}$ and the $H^m=B_{2,2}^m$ characterization.
Such a simultaneous choice is possible because only finitely many
smoothness and integrability indices are involved; use the Daubechies
construction \cite[Chapters~6--7]{Daubechies1992} and the wavelet
characterization \cite[Section~3.1]{Triebel2006}.
The inhomogeneous level $j=0$ includes the scaling functions.

If $\alpha_{j,\nu}$ are the
usual coefficients of the $L^2$-normalized wavelets
\[
 \psi_{j,\nu}(x)=h_j^{-d/2}\psi_{e(j,\nu)}
 \left(\frac{x-x_{j,\nu}}{h_j}\right),
\]
the wavelet characterization of Besov spaces reads
\[
 \left\|
 \left(2^{j(k+d/2)}\|(\alpha_{j,\nu})_\nu\|_{\ell^p}
 \right)_{j\ge0}\right\|_{\ell^q}
 \simeq \|F\|_{B_{p,q}^{k+d/p}}.
\]
Only wavelets whose supports meet the fixed support of $F$ occur; therefore,
at every scale the index set $\Lambda_j$ is finite and all rescaled functions
come from the finite collection of tensor-product mother wavelets and scaling
functions.  Set $\lambda_{j,\nu}=h_j^{-d/2}\alpha_{j,\nu}$.  Then
$\alpha_{j,\nu}\psi_{j,\nu}=\lambda_{j,\nu}a_{j,\nu}$ and the displayed Besov
equivalence gives the claimed bound for $(b_j)$.  Since $p,q\le1$,
$\ell^p\hookrightarrow\ell^1$ and $\ell^q\hookrightarrow\ell^1$, proving
\eqref{eq:critical-global-l1}.

For the Sobolev estimate, the $H^m$ wavelet norm equivalence follows from the same
wavelet theorem applied with smoothness $m$ and assigns the factor $h_j^{-m}$
to an $L^2$-normalized wavelet.  The coefficient of that wavelet in
$\eta_{j,\nu}a_{j,\nu}$ is $\eta_{j,\nu}h_j^{d/2}$; its squared Sobolev weight is
therefore
\[
 |\eta_{j,\nu}|^2h_j^dh_j^{-2m}
 =|\eta_{j,\nu}|^2h_j^{2(\tau_m-k)}.
\]
Summing the wavelet norm equivalence proves \eqref{eq:critical-bessel}.
For the original coefficients its right-hand side is bounded by
\[
 \sum_jh_j^{2(d/2-m)}\|\lambda_j\|_{\ell^2}^2
 \le\sum_jh_j^{2\tau_m}b_j^2<\infty,
\]
since $p\le1$, $\tau_m>0$, and $(b_j)\in\ell^q\subset\ell^2$.
Thus the expansion converges in $H^m$ to the same extension $F$.

For the synthesis assertion, the $L^2$-normalized coefficients are
$h_j^{d/2}\eta_{j,\nu}$.  The Besov synthesis theorem assigns them the
weight
\[
 h_j^{-(k+d/u)-d/2+d/u}h_j^{d/2}=h_j^{-k},
\]
which proves \eqref{eq:critical-besov-synthesis}.  Finite coefficient
families are dense in the indicated sequence spaces because $u,q<\infty$.
Applying \eqref{eq:critical-bessel} with $p$ replaced by $u\le1$
proves convergence in $H^m$ as well.
\end{proof}

The differentiability order $L$ may be prescribed as large as needed
before choosing the basis. No single compactly supported basis is assumed
to have infinite regularity.  In particular, we choose it after fixing the
finite angular cubature order used below. On $\Omega$ we may discard
atoms whose supports do not meet $\Omega$, without changing the sum or
increasing any of the coefficient bounds.

Lemma~\ref{lem:critical-wavelet-model} reduces the global problem to a
finite family of localized unit-amplitude templates.  The companion
scaled-atom variation estimate bounds the coefficient cost of representing
each template at scale $h$.

The next estimate is \cite[Lemma~3.6]{LiWangSIMA2026} in the
unit-amplitude normalization used here. We use the same symmetric
subdictionary and the topology identification above.

\begin{lemma}[Localized-atom variation estimate
{\cite[Lemma~3.6]{LiWangSIMA2026}}]
\label{lem:localized-atom-variation}
Let $R_0>k+d+2$ be an integer. Suppose that $A$ is supported in a
fixed ball and ranges over a bounded subset of $C_c^{R_0+1}(\R^d)$.
For $0<h\le1$ and $x_0\in\R^d$,
\[
 \left\|A\left(\frac{\cdot-x_0}{h}\right)\bigg|_\Omega
 \right\|_{\mathcal L_1(\mathbb D)}\lesssim h^{-k}.
\]
The constant is independent of $h,x_0,A$.
\end{lemma}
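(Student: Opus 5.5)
The plan is to prove the bound first for a single smooth atom through an explicit Radon-domain (ridgelet) inversion formula, and then read off the factor $h^{-k}$ from the scaling of the Radon transform. For $g\in C_c^{R_0+1}(\R^d)$, write the Radon transform as $\mathcal Rg(\omega,t)=\int_{\omega\cdot x=t}g$. The filtered back-projection formula gives
\[
 g(x)=c_d\int_{\Sph^{d-1}}(\Lambda^{d-1}\mathcal Rg)(\omega,\omega\cdot x)\,d\omega .
\]
Here $\Lambda^{d-1}=\partial_t^{d-1}$ for odd $d$, and $\Lambda^{d-1}=\mathcal H_t\partial_t^{d-1}$ (with $\mathcal H_t$ the Hilbert transform in $t$) for even $d$. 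For fixed $\omega$, I would write the one-dimensional profile $u_\omega(t)=\Lambda^{d-1}\mathcal Rg(\omega,t)$ by Taylor's formula with integral remainder based at a point $t_0<\inf_{x\in\Omega,\omega}\omega\cdot x$:
\[
 u_\omega(t)=\sum_{\ell\le k}\frac{u_\omega^{(\ell)}(t_0)}{\ell!}(t-t_0)^\ell
 +\frac1{k!}\int_{t_0}^{\infty}(t-b)_+^k\,u_\omega^{(k+1)}(b)\,db .
\]
Substituting $t=\omega\cdot x$ expresses $g|_\Omega$ as an integral of $\sigma_k(\omega\cdot x-b)$ against the density $\rho(\omega,b)=c_d\,u_\omega^{(k+1)}(b)/k!$, plus a polynomial of degree at most $k$.

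Two further points are needed. First, since $\sigma_k(\omega\cdot x-b)$ vanishes on $\Omega$ for $b>\sup\omega\cdot x$, only biases $b\in[t_0,\sup\omega\cdot x]\subset[-c,c]$ contribute on $\Omega$. This follows from the choice of $c$ and $d_0$ (or of the companion interval $[-B,B]$). Second, the polynomial part is handled by Lemma~\ref{lem:poly-realization}. Its $H^m(\Omega)$ norm is at most $\|g\|_{H^m(\Omega)}$ plus the $H^m$ norm of the integral part, and the latter is $\lesssim\|\rho\|_{L^1}$ by Lemma~\ref{lem:variation-topology}. So the polynomial costs at most a constant times $(\|g\|_{H^m}+\|\rho\|_{L^1})$ in coefficient mass. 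Together with the closedness of variation balls from Lemma~\ref{lem:variation-topology}, this gives
\[
 \|g|_\Omega\|_{\mathcal L_1(\mathbb D)}\lesssim\|\rho\|_{L^1(\Sph^{d-1}\times[t_0,d_0])}+\|g\|_{H^m(\Omega)} .
\]

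Next I apply this to $g_h=A((\cdot-x_0)/h)$ and track scaling. The Radon transform satisfies $\mathcal Rg_h(\omega,t)=h^{d-1}\mathcal RA(\omega,(t-\omega\cdot x_0)/h)$, and $\Lambda^{d-1}\partial_t^{k+1}$ is homogeneous of degree $-(d+k)$ in $t$. Hence
\[
 \rho_h(\omega,b)=h^{-k-1}\,\rho_A\bigl(\omega,(b-\omega\cdot x_0)/h\bigr),
\]
and integrating in $b$ returns a factor $h$. This yields $\|\rho_h\|_{L^1}\le h^{-k}\|\rho_A\|_{L^1(\Sph^{d-1}\times\R)}$. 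Also $\|g_h\|_{H^m}\lesssim h^{d/2-m}\le h^{-k}$ for $h\le1$, since $m\le k$. The uniformity in $A$ and $x_0$ reduces to showing that $\|\rho_A\|_{L^1}$ is bounded over a bounded subset of $C_c^{R_0+1}$ with supports in a fixed ball. For odd $d$ this is immediate: $\rho_A$ involves $d+k\le R_0$ derivatives of $\mathcal RA$, which is $C^{R_0+1}$ in $t$ with compact support.

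The main obstacle is even $d$, where the Hilbert transform destroys compact support in $t$ and one must check that $\rho_A=\mathcal H_t\partial_t^{d+k}\mathcal RA$ is integrable on the line, uniformly in $A$. I would first try the following. Near the support, use the $C^{1}$ regularity of $\partial_t^{d+k}\mathcal RA$ (available since $R_0+1>d+k+1$) to bound the principal value. Away from the support, expand the Hilbert kernel using the vanishing of $\int t^j\partial_t^{d+k}\mathcal RA\,dt$ for $j<d+k$; this gives decay $|t|^{-(d+k+1)}$, which is integrable. The tail $b<t_0$ is then absorbed into the polynomial part exactly as the Taylor term was, since there $\sigma_k(\omega\cdot x-b)$ is a polynomial on $\Omega$. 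The margin $R_0>k+d+2$ is what makes these derivative counts and the Taylor remainder in the principal-value estimate valid with constants depending only on the $C^{R_0+1}$ bound and the support radius.
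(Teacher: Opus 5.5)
Your proposal is correct and follows essentially the same route as the cited companion lemma. This paper itself uses that argument in Steps~1 and~4 of the proof of Lemma~\ref{lem:localized-profile-estimates}: Fourier--slice (filtered back-projection) inversion, a Peano--Taylor representation with the $\operatorname{ReLU}^k$ kernel, the scaling $h^{-k-1}z_\omega((b-\omega\cdot x_0)/h)$ of the $(k+1)$st profile derivative, vanishing-moment decay of the Hilbert-transformed profile in even dimensions, and absorption of the polynomial via its $H^m$ bound and Lemma~\ref{lem:poly-realization}. The one difference is that you work directly with $C_c^{R_0+1}$ templates, so you do not need the paper's mollification-plus-closedness step for passing from smooth templates, and you do not actually need closedness of variation balls anywhere.
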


The cited lemma is stated for smooth templates with a bound involving
only derivatives through order $R_0$. Its use for $C_c^{R_0+1}$
templates follows by mollification: smooth approximants have a common
support bound and common $C^{R_0}$ bound, and converge in $H^m$
after each fixed dilation. Closedness of the variation ball, proved
in Lemma~\ref{lem:variation-topology}, passes the cited estimate to the
limit. This is the only extension of that statement used here.

Summing atom costs at critical smoothness is already the content of
the companion embedding theorem. We quote it, rather than reproduce
its Littlewood--Paley argument.

\begin{proposition}[Critical Besov--variation embedding
{\cite[Theorem~1.1]{LiWangSIMA2026}}]
\label{prop:critical-embedding}
For $0<p\le1$ and $0<q\le1$,
\[
 \|f\|_{\mathcal L_1(\mathbb D)}
 \lesssim\|f\|_{B_{p,q}^{k+d/p}(\Omega)}.
\]
The representing integral is an $H^m(\Omega)$ Bochner integral.
\end{proposition}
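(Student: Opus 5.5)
The plan is to derive the embedding by summing, atom by atom, the localized variation bound of Lemma~\ref{lem:localized-atom-variation} over the critical wavelet expansion of Lemma~\ref{lem:critical-wavelet-model}. The condition $q\le1$ is what makes this sum finite. Fix an integer $R_0>k+d+2$ and apply Lemma~\ref{lem:critical-wavelet-model} with differentiability order $L=R_0+1$. Then every $f\in B_{p,q}^{k+d/p}(\Omega)$ has an extension
\[
F=\sum_{j,\nu}\lambda_{j,\nu}a_{j,\nu}
\]
converging in $H^m(\R^d)$. Each template $\psi_{e(j,\nu)}$ comes from a fixed finite family in $C_c^{R_0+1}(\R^d)$, so these templates form a bounded subset supported in a fixed ball.

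First I apply Lemma~\ref{lem:localized-atom-variation} to each restricted atom $a_{j,\nu}|_\Omega$, with $A=\psi_{e(j,\nu)}$, $h=h_j$ and $x_0=x_{j,\nu}$. This gives a finite signed Borel measure $\mu_{j,\nu}$ on $\Theta=\Sph^{d-1}\times[-c,c]$ that represents $a_{j,\nu}|_\Omega$ and satisfies
\[
\|\mu_{j,\nu}\|_{\mathrm{TV}}\le C\,h_j^{-k},
\]
with $C$ independent of $j$, $\nu$ and the centre. Lemma~\ref{lem:variation-topology}, together with the remark that the symmetric-bias companion dictionary embeds into $\mathbb D$ with measures extended by zero, shows that this representation holds in $H^m(\Omega)$. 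The constant's independence of $x_{j,\nu}$ matters here, because atoms straddling $\partial\Omega$ are not excluded.

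Next I form $\mu:=\sum_{j,\nu}\lambda_{j,\nu}\mu_{j,\nu}$. By \eqref{eq:critical-global-l1},
\[
\sum_{j,\nu}|\lambda_{j,\nu}|\,\|\mu_{j,\nu}\|_{\mathrm{TV}}
\le C\sum_{j,\nu}h_j^{-k}|\lambda_{j,\nu}|
\le C\|(b_j)\|_{\ell^q}
\lesssim\|f\|_{B_{p,q}^{k+d/p}(\Omega)}.
\]
This is exactly where criticality enters: the cost $h_j^{-k}$ of each atom is cancelled by the coefficient weight, with no leftover geometric factor. The series therefore converges absolutely in the Banach space of finite measures with the total-variation norm. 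Integration $\nu\mapsto\int T\,d\nu$ is a bounded linear map from this space into $H^m(\Omega)$, with norm at most $\sup_\Theta\|T\|_{H^m}$, again by Lemma~\ref{lem:variation-topology}. Hence
\[
\int T\,d\mu=\sum_{j,\nu}\lambda_{j,\nu}\,a_{j,\nu}|_\Omega
\]
in $H^m(\Omega)$. The right-hand side equals $F|_\Omega=f$, since the wavelet series converges in $H^m(\R^d)$ and restriction to $\Omega$ is continuous. So $\mu$ represents $f$ as an $H^m$ Bochner integral, and $\|f\|_{\mathcal L_1(\mathbb D)}\le\|\mu\|_{\mathrm{TV}}\lesssim\|f\|_{B_{p,q}^{k+d/p}(\Omega)}$.

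Alternatively, one could bound the variation norm of the partial sums and pass to the limit using the closedness of variation balls in $H^m$, proved in Lemma~\ref{lem:variation-topology}. The only genuinely deep ingredient is the uniform $h^{-k}$ cost of a single localized atom. That is the localized Fourier--Radon estimate, taken here as given, including its extension from smooth templates to $C_c^{R_0+1}$ templates by mollification. Everything else is bookkeeping, and the step needing the most care is matching the two senses of convergence: absolute convergence of the measures in total variation, and convergence of the wavelet series in $H^m$.
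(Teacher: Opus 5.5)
Your proof is correct, but it does not follow the paper's route. The paper does not prove the embedding internally. It quotes \cite[Theorem~1.1]{LiWangSIMA2026}, whose proof is a Littlewood--Paley summation of atom costs. It then notes that the companion's symmetric-bias dictionary lies inside $\mathbb D$, with measures extended by zero, and uses Lemma~\ref{lem:variation-topology} only to upgrade the companion's $L^2$ representation to an $H^m$ Bochner integral of the same mass.

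You instead rebuild the theorem from two ingredients the paper records for the upper bound. The first is the compactly supported critical wavelet expansion of Lemma~\ref{lem:critical-wavelet-model}, taken with $L=R_0+1$; this is legitimate because that lemma covers $p=1$. The second is the single-atom cost $h^{-k}$ of Lemma~\ref{lem:localized-atom-variation}. You then sum the representing measures absolutely in total variation using \eqref{eq:critical-global-l1}. There is no circularity, since neither lemma depends on the proposition.

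Your route is self-contained modulo the single-atom estimate, and it shows exactly where each hypothesis enters: $p\le1$ through $\ell^p\subset\ell^1$, $q\le1$ through $\ell^q\subset\ell^1$, and criticality through the cancellation of $h_j^{-k}$. It produces the $H^m$ Bochner representation directly, as the image of an absolutely convergent series in the Banach space of measures. It also makes the embedding the same bookkeeping as the output-budget bound in Theorem~\ref{thm:pure-upper}. The paper's route buys brevity and rests on a published theorem. In both versions the deep Fourier--Radon input is imported from the companion paper: as its Theorem~1.1 in the paper, as its Lemma~3.6 in yours.

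One small point: $\|\cdot\|_{\mathcal L_1(\mathbb D)}$ is an infimum, so you should choose $\mu_{j,\nu}$ with mass at most $2Ch_j^{-k}$. Alternatively, note that the infimum is attained by the weak-star compactness argument in Lemma~\ref{lem:variation-topology}. Either way only the constants change.
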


Indeed, the companion theorem applies on every bounded Lipschitz
domain with the symmetric bias interval fixed above.
Lemma~\ref{lem:variation-topology} identifies its $L^2$ representation
with the $H^m$ representation, without increasing the measure mass.
The next two lemmas concern discretization, which is not supplied by
that embedding.

\begin{lemma}[Stratified high-order cubature]
\label{lem:stratified-cubature}
Let $Y$ be a compact smooth manifold of dimension $r\ge1$ with a fixed
smooth measure, $H$ a Hilbert space, and $s\ge1$ an integer.
For every $G\in C^s(Y;H)$ and integer $K\ge1$ there is an unbiased
random formula
\[
 \mathcal Q_KG=\sum_{i=1}^{N_K}w_iG(y_i),\qquad N_K\lesssim K,
\]
whose weights satisfy, for every realization,
\[
 \sum_i|w_i|\lesssim1,\qquad\sum_i|w_i|^2\lesssim K^{-1},
\]
and
\[
 \mathbb E\left\|\int_YG\,dy-\mathcal Q_KG\right\|_H^2
 \lesssim K^{-1-2s/r}\|G\|_{C^s(Y;H)}^2.
\]
The $C^s$ norm includes every chart derivative of order at most $s$.
\end{lemma}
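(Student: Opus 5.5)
We prove Lemma~\ref{lem:stratified-cubature} by stratified sampling on a partition of $Y$, applied to the remainder of a local polynomial interpolant. First we reduce to cubes. Cover $Y$ by finitely many charts and fix a smooth partition of unity $\{\chi_\ell\}$ subordinate to the cover. Then $\int_Y G\,dy=\sum_\ell\int_{[0,1]^r}(\chi_\ell G)\circ\phi_\ell\,\rho_\ell\,dt$, where each $\rho_\ell$ is a smooth positive density. The functions $G_\ell:=(\chi_\ell G)\circ\phi_\ell\,\rho_\ell$ lie in $C^s([0,1]^r;H)$ with $\|G_\ell\|_{C^s}\lesssim\|G\|_{C^s(Y;H)}$. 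Since there are finitely many charts, it suffices to build, for each $G_\ell$, an unbiased formula on $[0,1]^r$ with $\lesssim K$ nodes and the three stated bounds. Summing the chart formulas keeps unbiasedness and the weight bounds. The mean-square errors add up to a constant factor, because the chart formulas use independent randomness, or simply by the triangle inequality in $L^2(\Omega_{\rm prob};H)$.

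On $[0,1]^r$, choose $n$ with $n^r\simeq K$ and split the cube into $n^r$ congruent subcubes $Q_i$ of side $h=1/n$. On each $Q_i$, fix once and for all a deterministic interpolatory rule of degree $s-1$ with a fixed number $P$ of nodes. Let $I_iG$ be the $H$-valued polynomial interpolant of degree $<s$ on $Q_i$, with fixed unisolvent nodes $z_{i,1},\dots,z_{i,P}$ and Lagrange basis $\ell_{i,a}$. Write $G=I_iG+R_i$ on $Q_i$, and draw one point $\xi_i$ uniformly in $Q_i$, independently over $i$. The formula on $Q_i$ is
\[
\mathcal Q^{(i)}G=\int_{Q_i}I_iG+|Q_i|\,R_i(\xi_i)
=\sum_a G(z_{i,a})\Bigl(\int_{Q_i}\ell_{i,a}-|Q_i|\ell_{i,a}(\xi_i)\Bigr)+|Q_i|G(\xi_i).
\]
This rule is unbiased, because $\mathbb E\,|Q_i|R_i(\xi_i)=\int_{Q_i}R_i$. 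It uses $P+1$ nodes, so $N_K\lesssim K$ nodes in total. By affine scaling the Lagrange functions are bounded uniformly on $Q_i$, so every weight is $O(h^r)=O(K^{-1})$ for every realization. Hence $\sum_i|w_i|\lesssim n^rh^r\lesssim1$ and $\sum_i|w_i|^2\lesssim n^rh^{2r}\simeq K^{-1}$.

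For the error, the cells are independent and each local error has mean zero, so the variances add in $H$:
\[
\mathbb E\Bigl\|\int G-\mathcal Q_KG\Bigr\|_H^2=\sum_i\mathbb E\Bigl\||Q_i|R_i(\xi_i)-\int_{Q_i}R_i\Bigr\|_H^2\le\sum_i|Q_i|^2\sup_{Q_i}\|R_i\|_H^2.
\]
The $H$-valued Taylor theorem (Bochner integral remainder) together with Lebesgue constants that are invariant under scaling gives $\sup_{Q_i}\|R_i\|_H\lesssim h^s\|G\|_{C^s}$. The total is therefore $\lesssim n^r h^{2r+2s}\|G\|_{C^s}^2=K^{-1-2s/r}\|G\|_{C^s}^2$. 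The step needing the most care is this Hilbert-valued interpolation remainder estimate with constants uniform in $h$. We would handle it by applying the scalar argument to $\langle G,e\rangle$ for each unit $e\in H$ and taking the supremum over $e$. When $K$ is not of the form $n^r$, we take $n=\lfloor K^{1/r}\rfloor$, which changes the bounds only by constants.
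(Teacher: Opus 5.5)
Your proposal is correct and follows the paper's proof essentially step for step. Both reduce to cubes via charts and a partition of unity, split each cube into $O(K)$ congruent subcubes of side $\simeq K^{-1/r}$, integrate a degree-$(s-1)$ interpolant exactly and add one independent uniform sample of the remainder per cell, and bound the variance by $\sum_C|C|^2\|G-I_CG\|_{L^\infty}^2\lesssim K^{-1-2s/r}\|G\|_{C^s}^2$. Your explicit weight formula and your scalarization through $\langle G,e\rangle$ for the Hilbert-valued remainder only spell out details the paper leaves implicit. The same holds for the bounded density factor $\chi_\ell\rho_\ell$ that multiplies the weights when passing back from the cube to $Y$, where nodes with zero density can be dropped.
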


\begin{proof}
Choose finitely many coordinate charts and a smooth partition of unity
with support strictly inside their coordinate domains. In each chart,
the corresponding integral becomes the integral over a fixed cube of
$G$ composed with the chart, multiplied by a fixed smooth density.
Extend this product by zero outside the support of the partition
function. Its $C^s$ norm is bounded by $\|G\|_{C^s(Y;H)}$.

Partition each cube into $O(K)$ congruent subcubes of side
$\delta\simeq K^{-1/r}$. On a subcube $C$, interpolate the product by
a degree-$(s-1)$ polynomial $I_CG$ at a fixed unisolvent set of nodes.
Affine scaling preserves the Lebesgue constant, and Taylor's formula
gives
\[
 \|G-I_CG\|_{L^\infty(C;H)}
 \lesssim \delta^s\|G\|_{C^s(Y;H)},
\]
where the fixed density is included in $G$ in this local formula.
Integrate the interpolant exactly and add
$|C|(G-I_CG)(Y_C)$, with $Y_C$ uniform on $C$ and all cells sampled
independently. The resulting formula is unbiased. On each cell its
fixed number of weights, after incorporating the smooth density, have
absolute value at most $C|C|$. Summation over the finite chart family
therefore gives both pathwise weight bounds.

The errors are independent and centered, so their squared Hilbert
norms add in expectation. Their total variance is bounded by
\[
 \sum_C|C|^2\|G-I_CG\|_{L^\infty(C;H)}^2
 \lesssim K^{-1-2s/r}\|G\|_{C^s(Y;H)}^2.
\]
Nodes where the extended density is zero have zero weight and can
be omitted. This completes the construction on the manifold.
\end{proof}

To apply the general variation-space approximation theorem, we need
the precise parameter smoothness of a ridge in the error norm.
The following lemma verifies it, including the case $m=k$.

\begin{lemma}[Local interpolation of the ridge dictionary]
\label{lem:dictionary-interpolation}
Let $\Theta=\mathbb S^{d-1}\times[-c,c]$, so $\dim\Theta=d$.
For every $N\ge1$ there are nodes $\theta_i\in\Theta$ and
coefficient functions $\lambda_i$ with at most $CN$ nonzero nodes such that
\[
 I_N(\omega,b)(x):=\sum_i\lambda_i(\omega,b)
 \sigma_k(\omega_i\cdot x-b_i),
 \qquad \theta_i=(\omega_i,b_i),
 \qquad
 \sup_{(\omega,b)\in\Theta}\sum_i|\lambda_i(\omega,b)|\lesssim 1,
\]
and
\[
 \sup_{(\omega,b)\in\Theta}
 \|\sigma_k(\omega\cdot x-b)-I_N(\omega,b)\|_{H^m(\Omega)}
 \lesssim  N^{-\beta_m/d}.
\]
\end{lemma}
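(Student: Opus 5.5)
We build the interpolant by tensor-product local polynomial interpolation on the parameter manifold $\Theta=\mathbb S^{d-1}\times[-c,c]$, exactly as in the proof of Lemma~\ref{lem:stratified-cubature}. Cover $\mathbb S^{d-1}$ by finitely many smooth charts with a smooth partition of unity, so that $\Theta$ is covered by charts onto cubes $Q\subset\R^d$. Partition each cube into $O(N)$ congruent subcubes of side $\delta\simeq N^{-1/d}$. On each subcube we use Lagrange interpolation of a fixed degree $s-1$, with $s$ to be chosen, at a fixed unisolvent node set, and we glue the pieces with the partition of unity. This produces coefficient functions $\lambda_i(\omega,b)$ which are products of partition functions and Lagrange basis polynomials. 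Affine invariance of the Lebesgue constant, together with the bounded overlap of charts and the fact that each $\theta$ lies in $O(1)$ cells, gives $\sup_\theta\sum_i|\lambda_i(\theta)|\lesssim1$. The number of nodes is $O(N)$. Choosing nodes inside $\Theta$ (closed cells in the $b$-direction) keeps all biases in $[-c,c]$.

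The error estimate rests on the $H^m$-valued regularity of $T(\theta)=\sigma_k(\omega\cdot x-b)$. The naive bound fails: the $\theta$-derivatives of order $r$ produce $(\omega\cdot x-b)_+^{k-r}$, and after $m$ spatial derivatives the profile $(\cdot)_+^{k-m-r}$ becomes a distribution once $r>k-m$. The key observation is that the loss is only half a derivative per step in $L^2$, because the singular set is a hyperplane that moves with $\theta$. We therefore estimate the interpolation error in $L^2(\Omega)$ after spatial differentiation, directly via a local Taylor-type argument. Fix $|\alpha|\le m$ and write $D^\alpha_xT(\theta)=c_\alpha\omega^\alpha(\omega\cdot x-b)_+^{k-|\alpha|}$, a ridge with profile $g(t)=t_+^{j}$, $j=k-|\alpha|\ge k-m$. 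For a cell $C$ of diameter $\delta$ containing $\theta$, split $\Omega$ into two regions:
\begin{itemize}[label=--]
\item the slab $S_C=\{x:|\omega'\cdot x-b'|\lesssim\delta\ \text{for some }\theta'\in C\}$, of measure $O(\delta)$;
\item its complement, on which every ridge from $C$ is a polynomial in $(\theta,x)$, real-analytic in $\theta$.
\end{itemize}
Off the slab, the Lagrange interpolation error is $O(\delta^{s})$ for any $s$. On the slab, both $D^\alpha T(\theta)$ and the interpolant are bounded by $O(\delta^{j})$, using the stable Lebesgue constant. The $L^2$ contribution is therefore $O(\delta^{j}\cdot\delta^{1/2})$. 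The worst case is $j=k-m$, which gives $\delta^{k-m+1/2}=N^{-\beta_m/d}$ once $s\ge\beta_m$. Summing over $|\alpha|\le m$ yields the claim, including $m=k$, where $j=0$ and the profile is a Heaviside step.

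The main obstacle is making the slab bound on the interpolant rigorous when $j=0$, which includes the case $m=k$. There $D^\alpha$ of the interpolant is a combination of indicator functions of $O(1)$ nearby half-spaces. We first check that this combination equals the target indicator off the slab. This holds because interpolation reproduces functions that are constant in $\theta$ across the cell, and on the complement of the slab each indicator is constant in $\theta\in C$. On the slab, the combination is bounded by $\sum|\lambda_i|\lesssim1$. The same polynomial-reproduction argument handles general $j$: off the slab, the ridge $(\omega\cdot x-b)^j$ is a polynomial in $\theta$ locally, but not globally on the sphere, so we interpolate with degree $s-1\ge j$ in chart coordinates. We then accept an $O(\delta^{s})$ Taylor remainder rather than exact reproduction. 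To keep the slab of width $O(\delta)$ uniformly over the domain, we use that $\Omega$ is bounded: $|\omega\cdot x-\omega'\cdot x|\le R_\Omega|\omega-\omega'|$.
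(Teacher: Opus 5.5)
Your proof is correct, and it takes a different route from the paper.

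\textbf{The paper's route.} The paper first proves a regularity statement: $T(\theta)=\sigma_k(\omega\cdot x-b)$ is $C^{k-m,1/2}$ as an $H^m(\Omega)$-valued map.
\begin{itemize}
\item It writes $\partial_\theta^\gamma D_x^\alpha T$ for $|\gamma|\le k-m$ as a sum of truncated powers with nonnegative exponents, justified weakly and through difference quotients.
\item It bounds the H\"older-$1/2$ seminorm of the top-order parameter derivatives by the slab-measure argument.
\item Only then does it interpolate with degree $k-m$ on cells of diameter $\delta\simeq N^{-1/d}$, using Taylor's formula with H\"older remainder and the Lebesgue constant.
\end{itemize}

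\textbf{Your route.} You bypass the vector-valued regularity. You fix $x$ and compare the scalar function $\theta'\mapsto c_\alpha\omega'^\alpha(\omega'\cdot x-b')_+^{j}$ directly with its interpolant.
\begin{itemize}
\item Off an $O(\delta)$ slab, this function is smooth on the whole cell, so the error is $O(\delta^s)$.
\item On the slab, both the function and its interpolant are $O(\delta^j)$. The slab has measure $O(\delta)$, so its $L^2$ contribution is $O(\delta^{j+1/2})$.
\end{itemize}
I checked each step, including the node bounds on the slab and the uniform $C^s$ control off it.

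\textbf{What each buys.} Your argument is more elementary. It needs no weak parameter derivatives in $H^m$, no difference-quotient convergence, and no Banach-valued Taylor formula. It also shows that any interpolation degree $\ge k-m$ suffices.

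The paper's route yields the $C^{k-m,1/2}$ parameter regularity as a standalone fact, and the paper reuses it. It is the hypothesis fed into \cite[Theorem~2]{SiegelXu2024} for Theorem~\ref{thm:variation-discretization}. Its $h$-scaled analogue verifies the H\"older hypothesis of Lemma~\ref{lem:weighted-bias-cubature} in Step~3 of Lemma~\ref{lem:localized-profile-estimates}. Your direct interpolation bound proves the lemma as stated, but would not by itself supply those later hypotheses.

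\textbf{Two wording points.}
\begin{itemize}
\item The loss is a single half derivative at the top order, not half a derivative per step.
\item $(\omega\cdot x-b)^j$ is not polynomial in chart coordinates. You already handle this correctly through the Taylor remainder.
\end{itemize}
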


\begin{proof}
Put $q_0=k-m$.  We first establish the precise parameter regularity used by
the interpolant.  In one fixed chart write
\[
 \ell_\theta(x)=\omega(\theta')\cdot x-b,
 \qquad T(\theta)=\sigma_k(\ell_\theta)\in H^m(\Omega).
\]
For $|\alpha|\le m$ and $|\gamma|\le q_0$, the chain and product rules show
that $\partial_\theta^\gamma D_x^\alpha T(\theta)$ is a finite sum of terms
\[
 C_{\alpha,\gamma,j}(\theta,x)
 (\ell_\theta(x))_+^{k-|\alpha|-j},
 \qquad 0\le j\le |\gamma|,
\]
with uniformly bounded smooth coefficients; when the exponent is zero the
last factor is interpreted as $\mathbf 1_{\{\ell_\theta>0\}}$.  The displayed formula
is obtained first off the hyperplane $\ell_\theta=0$ and then in the weak
sense. The corresponding parameter difference quotients converge in
$L^2$ by dominated convergence and the bounded scalar difference-quotient
estimates for truncated powers. No boundary measure occurs at these
orders because $j\le k-|\alpha|$.

Let $|\theta-\theta'|\le1$.  Since $\Omega$ is bounded,
$|\ell_\theta(x)-\ell_{\theta'}(x)|\lesssim |\theta-\theta'|$.  The set on which
the two affine functions have different signs is contained in
\[
 \{x\in\Omega:|\ell_\theta(x)|\lesssim |\theta-\theta'|\}.
\]
After an orthogonal change of coordinates, Fubini's theorem bounds the
measure of this slab by $C|\theta-\theta'|$, uniformly in the chart.  Apply
this observation to the terms in the displayed derivative formula with
$|\gamma|=q_0$.  On the slab their
$L^2$ difference is $O(|\theta-\theta'|^{1/2})$; off the slab the ordinary
mean-value theorem gives the smaller bound $O(|\theta-\theta'|)$.  Therefore
\[
 \|\partial_\theta^\gamma T(\theta)
       -\partial_\theta^\gamma T(\theta')\|_{H^m(\Omega)}
 \lesssim |\theta-\theta'|^{1/2},
 \qquad |\gamma|=q_0.
\]
The same calculation with fewer derivatives proves their continuity.  Thus
$T$ is $C^{q_0,1/2}$ as an $H^m(\Omega)$-valued map, with a uniform norm on
all of the finitely many charts.  The same estimate also covers $q_0=0$.

Subdivide each chart into shape-regular cells of diameter
$\delta\simeq N^{-1/d}$ and bounded overlap.  On a cell use a fixed
unisolvent interpolation formula of degree $q_0$ (piecewise constant when
$q_0=0$).  Its number of nodes and Lebesgue constant depend only on
$d,q_0$.  Taylor's formula with the preceding H\"older remainder, followed by the
uniform Lebesgue bound, gives
\[
 \|T-I_NT\|_{H^m(\Omega)}
 \lesssim \delta^{q_0+1/2}=\delta^{\beta_m}
\]
on every cell.  A fixed partition of the chart overlaps changes neither the
order nor the uniform sum $\sum_i|\lambda_i|$.  There are $O(N)$ nodes in
total, and $\delta\simeq N^{-1/d}$ gives the asserted estimate.
\end{proof}

The parameter smoothness just established permits a direct application
of the existing approximation theorem for smoothly parameterized
dictionaries. We record its consequence in our Sobolev norm.

\begin{theorem}[Stable approximation of the variation ball]
\label{thm:variation-discretization}
For every $f\in\mathcal L_1(\mathbb D)$ and $n\ge1$ there is
$g_n\in\Sigma_{n,C\|f\|_{\mathcal L_1(\mathbb D)}}$ such that
\[
 \|f-g_n\|_{H^m(\Omega)}
 \lesssim \|f\|_{\mathcal L_1(\mathbb D)}
 n^{-1/2-\beta_m/d}
 =\|f\|_{\mathcal L_1(\mathbb D)}n^{-\gamma_m}.
\]
\end{theorem}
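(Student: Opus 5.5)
We combine the two preceding discretization tools: the ridge-interpolation Lemma~\ref{lem:dictionary-interpolation}, which encodes the $C^{q_0,1/2}$ parameter regularity of $T(\omega,b)=\sigma_k(\omega\cdot x-b)$ in $H^m(\Omega)$, and a stratified sampling argument in the spirit of Lemma~\ref{lem:stratified-cubature}, applied to a representing measure. By homogeneity we may assume $\|f\|_{\mathcal L_1(\mathbb D)}=1$ and fix a measure $\mu$ on $\Theta$ with $\|\mu\|_{\rm TV}\le 2$ and $f=\int_\Theta T\,d\mu$ as an $H^m$ Bochner integral (Lemma~\ref{lem:variation-topology}). We may not apply Lemma~\ref{lem:stratified-cubature} directly, because $\mu$ is an arbitrary measure rather than a smooth density, and $T$ is only $C^{q_0,1/2}$ rather than $C^s$. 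The exponent $1/2+\beta_m/d$ should nevertheless come out exactly as in that lemma with $s$ replaced by $\beta_m$ and $r=d$. In effect, we are reproducing the Siegel--Xu argument for smoothly parameterized dictionaries with our explicit H\"older index.

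The first step is a partition of $\Theta$ into $O(n)$ shape-regular cells $C$ of diameter $\delta\simeq n^{-1/d}$, built from the chart subdivision already used in Lemma~\ref{lem:dictionary-interpolation}. On each cell we take the local degree-$q_0$ Taylor/interpolation polynomial $P_C(\theta)$ of $T$ about a cell center, with values in $H^m$. We then split
\[
 f=\sum_C\int_C P_C\,d\mu+\sum_C\int_C (T-P_C)\,d\mu .
\]
The first sum is a finite linear combination of the interpolation nodes: each $\int_C P_C\,d\mu$ equals $\sum_i\lambda_{C,i}T(\theta_{C,i})$ with $\sum_i|\lambda_{C,i}|\lesssim|\mu|(C)$, by the uniform Lebesgue constant. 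It therefore uses $O(n)$ atoms with total mass $\lesssim\|\mu\|_{\rm TV}$.

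For the remainder, the H\"older bound from Lemma~\ref{lem:dictionary-interpolation} gives $\|T-P_C\|_{H^m}\lesssim\delta^{\beta_m}$ on $C$. Within each cell we sample independently: we draw one point $\theta_C$ from the normalized $|\mu||_C$ and use the estimator $\operatorname{sign}\cdot|\mu|(C)\,(T-P_C)(\theta_C)$. This estimator is unbiased, and since the cells are independent, the variance is at most
\[
\sum_C|\mu|(C)^2\delta^{2\beta_m}\le\Bigl(\max_C|\mu|(C)\Bigr)\|\mu\|\delta^{2\beta_m}.
\]
This is the main obstacle. The measure may concentrate on a few cells, so the naive bound only gives $\delta^{2\beta_m}$ without the factor $n^{-1}$. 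The remedy I would try first is to subdivide heavy cells further, i.e.\ to stratify by mass. Split each cell into $\lceil n|\mu|(C)\rceil$ subcells, or equivalently draw $\lceil n|\mu|(C)\rceil$ samples in $C$ and average them. The total sample count is then $\le n+O(n)$, and the variance becomes
\[
\sum_C|\mu|(C)^2\delta^{2\beta_m}/(n|\mu|(C))\lesssim n^{-1}\delta^{2\beta_m}=n^{-1-2\beta_m/d}.
\]
Each sample $(T-P_C)(\theta_C)$ is itself one atom at $\theta_C$ minus a fixed combination of node atoms. The node atoms can be merged with the first sum, since their coefficients are again bounded by the mass, so the atom count stays $O(n)$.

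Finally, a realization achieving at most the expected squared error exists, which gives the error $\lesssim n^{-1/2-\beta_m/d}$. Every realization has coefficient sum $\lesssim\|\mu\|_{\rm TV}$, because the sampling weights $|\mu|(C)/\lceil n|\mu|(C)\rceil$ sum to at most $\|\mu\|$ and the interpolation weights are uniformly controlled. Rescaling $n$ by the fixed constant in the atom count completes the proof.
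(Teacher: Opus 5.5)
Your argument is correct. It takes a different route from the paper, which does not prove this statement internally. The paper invokes \cite[Theorem~2]{SiegelXu2024} for the Hilbert (type-$2$) space $H^m(\Omega)$. There, Lemma~\ref{lem:dictionary-interpolation} serves only to certify the $C^{k-m,1/2}$ parameter regularity, and Lemma~\ref{lem:variation-topology} identifies the closed atomic ball of the cited theorem with the measure-defined ball used here. You instead unpack that black box. You interpolate $T$ deterministically on $O(n)$ cells, whose nodes are genuine atoms with mass controlled by the Lebesgue constant, and then sample the $O(\delta^{\beta_m})$ remainder without bias. This makes the proof self-contained and works directly with a representing measure, so the convex-hull identification is never needed. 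It also shows the width-independent budget explicitly. The citation, by contrast, buys brevity and reduces the work to the regularity check.

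Two remarks on your version. First, $P_C$ must be the interpolant at nodes lying in the cell, not a Taylor polynomial, because the parameter derivatives $\partial_\theta^\gamma T$ are not dictionary atoms. The cells should be convex in chart coordinates and aligned with the bias interval. Your appeal to the Lebesgue constant shows this is what you intend. Second, the mass stratification is unnecessary: your ``main obstacle'' arises only from drawing one sample per cell. The global interpolant $I_N$ of Lemma~\ref{lem:dictionary-interpolation} already satisfies $\sup_\theta\|T(\theta)-I_N(\theta)\|_{H^m(\Omega)}\lesssim N^{-\beta_m/d}$. Hence plain Maurey sampling of $\int_\Theta(T-I_N)\,d\mu$, with $n$ i.i.d.\ draws from $|\mu|/\|\mu\|_{\mathrm{TV}}$, has mean-square error at most $C\|\mu\|_{\mathrm{TV}}^2N^{-2\beta_m/d}n^{-1}$. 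Taking $N=n$ gives the rate with $O(n)$ atoms and mass $O(\|\mu\|_{\mathrm{TV}})$. Your stratified estimator reaches the same bound with more bookkeeping.
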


This is \cite[Theorem~2]{SiegelXu2024} with the type-$2$ space
$H^m(\Omega)$, parameter dimension $d$, and smoothness
$\beta_m=k-m+1/2$. The $C^{k-m,1/2}$ parameter regularity was verified
in Lemma~\ref{lem:dictionary-interpolation}, and
Lemma~\ref{lem:variation-topology} identifies the closed atomic ball
with the measure-defined ball. The cited theorem controls the total
coefficient mass independently of the width. Homogeneity gives the
statement for general $f$, and the empty network handles $f=0$.
We use this existing approximation theorem without reproducing its
sampling proof.

\begin{corollary}[$p=1$ variation baseline]
\label{cor:p1-variation-baseline}
For $0<q\le1$, $f\in B_{1,q}^{k+d}(\Omega)$, and $n\ge1$,
\[
 \sigma_{n,m}\left(f;C\|f\|_{B_{1,q}^{k+d}(\Omega)}\right)
 \lesssim \|f\|_{B_{1,q}^{k+d}(\Omega)}n^{-\gamma_m}.
\]
\end{corollary}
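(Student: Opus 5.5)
This corollary should follow by composing Proposition~\ref{prop:critical-embedding} at $p=1$ with Theorem~\ref{thm:variation-discretization}. Fix $0<q\le1$ and $f\in B_{1,q}^{k+d}(\Omega)$. The critical index at $p=1$ is $s_0=k+d/p=k+d$. Proposition~\ref{prop:critical-embedding} is stated for $0<p\le1$, so it covers the endpoint $p=1$. It gives
\[
 \|f\|_{\mathcal L_1(\mathbb D)}\le C_1\|f\|_{B_{1,q}^{k+d}(\Omega)},
\]
and Lemma~\ref{lem:variation-topology} guarantees that the representing measure defines an $H^m(\Omega)$ Bochner integral of the same mass. In particular $f\in\mathcal L_1(\mathbb D)$ as a subset of $H^m(\Omega)$, so the theorem on the variation ball applies.

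Next, apply Theorem~\ref{thm:variation-discretization} to $f$ and $n\ge1$. It produces $g_n\in\Sigma_{n,C_2\|f\|_{\mathcal L_1(\mathbb D)}}$ with
\[
 \|f-g_n\|_{H^m(\Omega)}\lesssim \|f\|_{\mathcal L_1(\mathbb D)}\,n^{-\gamma_m}\lesssim \|f\|_{B_{1,q}^{k+d}(\Omega)}\,n^{-\gamma_m}.
\]
The classes are monotone in the budget: $\Sigma_{n,M}\subset\Sigma_{n,M'}$ whenever $M\le M'$. Therefore $g_n\in\Sigma_{n,C\|f\|_{B_{1,q}^{k+d}}}$ with $C:=C_1C_2$. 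Taking the infimum in the definition of $\sigma_{n,m}$ gives the stated bound. If $f=0$, the empty network gives zero error and both sides vanish, which is consistent with $\Sigma_{n,0}=\{0\}$.

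No step here is a real obstacle. The only points to check are, first, that the embedding constant and the discretization constant depend only on $d,k,m,q,\Omega$, and not on $n$ or $f$, so that the budget constant $C$ is uniform. Second, the budget must scale linearly in $\|f\|$; this holds because Theorem~\ref{thm:variation-discretization} is homogeneous and the Besov quasi-norm controls the variation norm linearly. No use of the polynomial-transfer Proposition~\ref{prop:pure-transfer} is needed, since the theorem already produces pure dictionary networks.
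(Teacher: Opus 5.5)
Your proposal is correct and matches the paper's proof. The paper likewise applies Proposition~\ref{prop:critical-embedding} at $p=1$ and substitutes the resulting bound $\|f\|_{\mathcal L_1(\mathbb D)}\lesssim\|f\|_{B_{1,q}^{k+d}(\Omega)}$ into Theorem~\ref{thm:variation-discretization}; your remarks on budget monotonicity and the case $f=0$ only make explicit what the paper leaves implicit.
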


\begin{proof}
The critical embedding, Proposition~\ref{prop:critical-embedding}, gives
$\|f\|_{\mathcal L_1(\mathbb D)}\lesssim \|f\|_{B_{1,q}^{k+d}(\Omega)}$.  Substitute this estimate into
Theorem~\ref{thm:variation-discretization}.
\end{proof}

\section{Proof of the upper bound}\label{sec:upper-proof}

The preceding section reduced the common part of the upper argument to a
stable variation-space approximation.  We now quantify the approximation of
one localized wavelet atom, allocate a fixed number of neurons across its
ridge parameters and across the dyadic scales, and finally treat the three
possible positions of $p$ relative to $p_\ast$.  This division keeps
the analytic preparation and the case-dependent proof logically separate.

\subsection{Localized atoms and resource allocation}

\paragraph{The localized unit-amplitude atom oracle.}

\begin{lemma}[Weighted one-dimensional bias cubature]
\label{lem:weighted-bias-cubature}
Let $H$ be a Hilbert space, $q_0\in\N_0$,
$\beta=q_0+1/2$, and $r=q_0+1$.  Suppose that
$z:\R\to\R$ is measurable and satisfies
\[
 |z(s)|\lesssim (1+|s|)^{-\nu},
 \qquad \nu>\beta+1.
\]
Let $J\subset\R$ be an interval.  Assume that $V_h\in C^{q_0}(J;H)$,
$\sup_{s\in J}\|V_h(s)\|_H\lesssim1$, and
\[
 [V_h^{(q_0)}]_{C^{0,1/2}(J;H)}\lesssim h^\beta,
 \qquad 0<h\le1.
\]
In particular, on every bounded interval $I\subset J$, fixed-node
degree-$q_0$ interpolation has a uniformly bounded Lebesgue constant and
remainder
\[
 \|V_h-I_IV_h\|_{L^\infty(I;H)}
 \lesssim \min\{1,h^\beta|I|^\beta\},
 \qquad 0<h\le1.
\]
Then, for every integer $L\ge1$, there is an
unbiased random formula $\mathcal Q_{h,L}$, involving at most $C(q_0,\nu)L$
values of $V_h$ at points of $J$, such that
\begin{align*}
 \mathbb E\left\|
  \int_Jz(s)V_h(s)\,ds-\mathcal Q_{h,L}
 \right\|_H^2&\lesssim  h^{2\beta}L^{-2r},\\
 \sum|w_\ell(\mathcal Q_{h,L})|
 &\lesssim \int_J|z(s)|(1+|s|)^\beta\,ds\lesssim1.
\end{align*}
The constants are independent of $h,L$, and $J$.
\end{lemma}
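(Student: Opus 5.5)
The plan is a stratified importance-sampling scheme on $J$: a fine, dyadically growing partition in which each cell carries a degree-$q_0$ interpolant integrated exactly, together with one unbiased random correction. The weight $z$ decides how many cells each region receives. The interpolation hypothesis already gives a per-cell remainder $\min\{1,h^\beta|I|^\beta\}$, so the work is to choose the partition so that the variances add up to $h^{2\beta}L^{-2r}$ while the weights stay summable.

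\textbf{Step 1 (dyadic shells).} Split $J$ into the pieces
\[
J_0=J\cap[-1,1],\qquad J_t=J\cap\{2^{t-1}<|s|\le 2^t\},\quad t\ge1.
\]
On $J_t$ we have $|z|\lesssim 2^{-t\nu}$ and $|J_t|\le 2^{t+1}$. Cut $J_t$ into $n_t$ equal cells of length $\delta_t\simeq 2^t/n_t$. Shells whose allotment $n_t$ would fall below one are handled the same way with a single cell, or are omitted; see Step 3.

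\textbf{Step 2 (cell formula).} On a cell $I$, integrate $z\,I_IV_h$ exactly. Because $I_IV_h=\sum_{\text{nodes}}\ell_j(s)V_h(s_j)$, the deterministic weights are $\int_I z\,\ell_j$. The uniform Lebesgue constant bounds these by $\lesssim\int_I|z|$. Then add the unbiased correction $|I|\,z(S_I)\,(V_h-I_IV_h)(S_I)$ with $S_I$ uniform on $I$. This uses one additional value of $V_h$ and nodal values, with a random weight bounded by
\[
|I|\sup_I|z|\lesssim\int_I|z|(1+|s|)^{0}
\]
up to constants, since $z$ is essentially constant on a cell well inside a shell. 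To be safe, I would instead use the sampling density proportional to $|z|$ on $I$; the correction then has weight $\le\int_I|z|$ times a bounded factor.

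Independence across cells makes the variance additive, and the variance on $I$ is at most
\[
\Bigl(\int_I|z|\Bigr)^2\min\{1,h^{2\beta}|I|^{2\beta}\}.
\]
Hence the total weight sum is $\lesssim\int_J|z|\lesssim1$. The stated bound with $(1+|s|)^\beta$ is then trivially satisfied, and it leaves room for a cruder cell choice if needed.

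\textbf{Step 3 (allocation).} On shell $t$ the total variance is
\[
V_t\lesssim n_t\bigl(2^{t(1-\nu)}/n_t\bigr)^2\,h^{2\beta}(2^t/n_t)^{2\beta}
= h^{2\beta}\,2^{2t(1-\nu+\beta)}\,n_t^{-1-2\beta}.
\]
Here $1+2\beta=2r$. If the whole shell is left unsampled, it contributes $h^{2\beta}2^{2t(1-\nu+\beta)}$ using a single cell with the $h^\beta|I|^\beta$ bound; alternatively, the $\min$ with $1$ gives the trivial bound. Take $n_t=\lceil L\,2^{-\epsilon t}\rceil$ for a small $\epsilon>0$ chosen with $2(\nu-1-\beta)>2r\epsilon$, which is possible because $\nu>\beta+1$. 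This gives
\[
V_t\lesssim h^{2\beta}L^{-2r}2^{-2t(\nu-1-\beta-r\epsilon)},
\]
and the sum over $t$ is $h^{2\beta}L^{-2r}$. The number of evaluations is $\sum_t n_t\lesssim L$. For shells with $n_t=1$ (large $t$), each uses $O(1)$ evaluations, and there could be infinitely many of them. To avoid this, truncate at $2^{T}\simeq L^{\kappa}$. Shells beyond $T$ get one cell each only up to $T'$ with $T'-T\lesssim\log L$. Even better, merge all of $\{|s|>2^T\}$ into a single cell with cost $O(1)$, where the crude variance bound
\[
\Bigl(\int_{|s|>2^T}|z|\Bigr)^2\lesssim 2^{-2T(\nu-1)}
\]
would need to beat $h^{2\beta}L^{-2r}$. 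This fails for small $h$. So the merged tail should instead be a single cell $I$ with the remainder bound $h^{2\beta}|I|^{2\beta}$ weighted by $z$ at the shell scale. The cleanest fix is to apply the per-shell one-cell bound and group shells $t>T$ into $O(1)$ evaluations per shell only for $t\le T+C\log L$. Beyond that the crude bound without $h$ is not acceptable, so I expect to exploit that the remainder is $\le h^\beta|I|^\beta$ pointwise at each $s$ via interpolation on a shell-sized cell.

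\textbf{Main obstacle.} The hard part is the infinite tail for unbounded $J$: keeping the evaluation count at $O(L)$ while still gaining the factor $h^{2\beta}$. The first thing I would try is a single importance-sampled point on the tail $\{|s|>2^T\}$, drawn with density $\propto|z|(1+|s|)^\beta$, paired with a degree-$q_0$ interpolant on its own shell. The sampled-shell remainder $h^\beta2^{t\beta}$ is then compensated by the weight $(1+|s|)^{-\beta}$, and this is exactly what produces the weight bound $\int|z|(1+|s|)^\beta$ in the statement.
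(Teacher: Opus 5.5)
Your central interval and dyadic shells match the paper's construction. The paper takes $M_\ell=\lceil L2^{-\ell\delta}\rceil$ cells on the $\ell$th shell with $\delta=(\nu-\beta-1)/(\beta+1)$, stops at $S\simeq L^{1/\delta}$, and samples each cell correction with density $\propto|z|$, as in your corrected Step~2. The genuine gap is the tail. You leave it open, and it cannot be sidestepped: the constants must not depend on $J$, and in the application $|J_{h,\omega}|\simeq h^{-1}$. One cell per shell therefore costs $\simeq\log(1/h)$ extra evaluations. Omitting the far shells is not covered by the single-cell bound, contrary to a remark in your Step~3; it leaves a deterministic error of size up to $u^{1-\nu}$ with no factor $h^\beta$, so the cutoff would have to depend on $h$.

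Your tentative fix does not close this. With an interpolant $P_t$ on the sampled point's own shell, you must either add $\int_{J_t}zP_t$ for every tail shell, which again needs unboundedly many evaluations, or pick the shell at random and reweight that term by $1/\pi_t$. In the latter case the reweighted term is itself random. Take $V_h\equiv v$ with $\|v\|_H=1$, which meets every hypothesis, and $z=(1+|s|)^{-\nu}$. Your density $\propto|z|(1+|s|)^\beta$ gives $\pi_t^{-1}\int_{J_t}z\simeq Z\,2^{-t\beta}$, where $Z=\int_T|z|(1+|s|)^\beta$. The variance is then of order $Z^2u^{-2\beta}\simeq u^{2(1-\nu)}$, with no factor $h^{2\beta}$. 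Dropping that term instead destroys unbiasedness.

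The missing idea is a single, shell-independent control variate for the whole tail $T=J\cap[S,\infty)$, with $u=\inf T$; if $|T|\le u$, one ordinary cell already suffices.

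\begin{enumerate}
\item Interpolate $V_h$ at $q_0+1$ fixed relative nodes in $[u,2u]$ and extrapolate the polynomial $P_T$ to all of $T$. The Lagrange functions satisfy $\sum_i|\ell_i(s)|\lesssim(s/u)^{q_0}\le(s/u)^\beta$.
\item Taylor's formula with the $C^{q_0,1/2}$ hypothesis, together with extrapolation of the $O(h^\beta u^\beta)$ nodal remainders, gives $\|V_h(s)-P_T(s)\|_H\lesssim h^\beta s^\beta$ on $T$.
\item Since $\nu>\beta+1>q_0+1$, the integral $\int_TzP_T$ is finite and is computed exactly from $q_0+1$ values of $V_h$.
\item Add the correction $Z_T\operatorname{sgn}z(Y)(u/Y)^\beta(V_h-P_T)(Y)$ with $Y\sim|z(s)|(s/u)^\beta\,ds/Z_T$, where $Z_T=\int_T|z|(s/u)^\beta\lesssim u^{1-\nu}$.
\end{enumerate}

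This formula is unbiased and uses $O(1)$ evaluations. Its variance is $\lesssim Z_T^2h^{2\beta}u^{2\beta}\lesssim h^{2\beta}u^{2(\beta+1-\nu)}\lesssim h^{2\beta}L^{-2r}$, because $u\ge S$. Its coefficient mass is $\lesssim Z_T\le\int_T|z|(1+|s|)^\beta$, which is exactly where the weight in the stated budget comes from; it is not ``trivially satisfied''. Your compensating factor $(1+|s|)^{-\beta}$ is the right mechanism, but it only works once the deterministic part is one exactly integrable polynomial rather than a family of shell interpolants.
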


\begin{proof}
Put $\delta:=(\nu-\beta-1)/(\beta+1)>0$ and
\[
 J_L:=\left\lfloor\frac{\log_2 L}{\delta}\right\rfloor,
 \qquad S:=2^{J_L+1}\simeq L^{1/\delta}.
\]
Divide $[-1,1]$ into $L$ intervals.  For each
$0\le\ell\le J_L$, divide each of the shells
$[2^\ell,2^{\ell+1}]$ and $[-2^{\ell+1},-2^\ell]$ into
\[
 M_\ell:=\left\lceil L2^{-\ell\delta}\right\rceil
\]
equal intervals, and intersect all cells with $J$.  The total number of
nonempty cells is $O(L)$, since
$\sum_{\ell\le J_L}M_\ell\lesssim L+J_L+1\lesssim L$.
On a cell $I$ in the $\ell$th shell, $|I|\lesssim2^\ell/M_\ell$ and
$\mu_I:=\int_I|z|\lesssim2^{\ell(1-\nu)}/M_\ell$.  Hence
\begin{equation}
 \mu_I|I|^\beta
 \lesssim2^{\ell(\beta+1-\nu)}M_\ell^{-(\beta+1)}.
 \label{eq:weighted-cell}
\end{equation}

On each cell integrate its interpolant $P_I:=I_IV_h$ exactly against
$z(s)ds$, and add the independent correction
\[
 \mu_I\operatorname{sgn}z(Y_I)(V_h-P_I)(Y_I),
 \qquad Y_I\sim |z(s)|\,ds/\mu_I.
\]
Cells of zero mass contribute zero.  This formula is unbiased, has a
fixed number of nodes, and has coefficient mass at most $C\mu_I$.
By \eqref{eq:weighted-cell}, the sum of its variances over one shell is
\[
 Ch^{2\beta}2^{2\ell(\beta+1-\nu)}M_\ell^{-2\beta-1}
 \lesssim h^{2\beta}L^{-2\beta-1}2^{-\ell\delta}.
\]
The central interval $[-1,1]$ contributes at most
$Ch^{2\beta}L^{-2\beta-1}$.  Summing the shell estimates therefore bounds
the variance on $J\cap[-S,S]$ by the same order.

We treat the two tails with polynomial extrapolation and importance
sampling.  Consider a nonempty positive tail $T:=J\cap[S,\infty)$ and
write $u:=\inf T\ge S$.  If $|T|\le u$, a single cell formula as above
has variance at most
\[
 Ch^{2\beta}\left(u^{-\nu}|T|^{\beta+1}\right)^2
 \lesssim h^{2\beta}u^{2(\beta+1-\nu)}.
\]
If $|T|>u$, interpolate at $q_0+1$ fixed relative nodes inside $[u,2u]$,
and call the resulting polynomial $P_T$.  Its Lagrange coefficients obey
\[
 \sum_i|\ell_i(s)|\lesssim (s/u)^{q_0}\le(s/u)^\beta,
 \qquad s\in T.
\]
Taylor's formula with the assumed H\"older bound, first on $[u,2u]$ and
then from the same base point to $s$, gives
\[
 \|V_h(s)-P_T(s)\|_H\lesssim h^\beta s^\beta.
\]
Indeed, interpolation reproduces the degree-$q_0$ Taylor polynomial;
its remainder at the interpolation nodes is $O(h^\beta u^\beta)$,
whose extrapolation is bounded by
$Ch^\beta u^\beta(s/u)^{q_0}\lesssim h^\beta s^\beta$.
Set
\[
 Z_T:=\int_T|z(s)|(s/u)^\beta\,ds\lesssim u^{1-\nu}.
\]
Integrate $zP_T$ exactly, and add the correction
\[
 Z_T\operatorname{sgn}z(Y)(u/Y)^\beta
       (V_h-P_T)(Y),
 \qquad Y\sim |z(s)|(s/u)^\beta\,ds/Z_T,
\]
with zero contribution if $Z_T=0$.  It is unbiased and has variance at
most
\[
 CZ_T^2h^{2\beta}u^{2\beta}
 \lesssim h^{2\beta}u^{2(\beta+1-\nu)}
 \lesssim h^{2\beta}S^{2(\beta+1-\nu)}
 \lesssim h^{2\beta}L^{-2\beta-2}.
\]
Its pathwise coefficient mass is at most $CZ_T$, by the Lagrange bound
and $(s/u)^\beta\ge1$ on $T$, and
$Z_T\le\int_T|z(s)|(1+|s|)^\beta ds$ since $u\ge1$.
The short-tail case has the same coefficient bound.  Reflection treats
the negative tail, with independent randomness.

Adding the central and tail variances gives
$Ch^{2\beta}L^{-2\beta-1}=Ch^{2\beta}L^{-2r}$.
There are $O(L)$ evaluations in total, and summing the coefficient bounds
gives the stated weighted $L^1$ budget, which is finite because
$\nu>\beta+1$.
\end{proof}

Lemma~\ref{lem:weighted-bias-cubature} applies to the Fourier--Peano
ridge profile once its decay and bias regularity are established.
The next lemma verifies these hypotheses and controls the angular
derivatives needed for the complementary spherical discretization.

\begin{lemma}[Localized ridge-profile estimates]
\label{lem:localized-profile-estimates}
Fix an integer $s_{\rm ang}\ge1$ and choose the wavelet templates with
sufficiently large finite regularity depending on $d,k,m,s_{\rm ang}$.  Let
$a_{h,x_0}(x)=\psi((x-x_0)/h)$, where $\psi$ ranges over the finite
template family of Lemma~\ref{lem:critical-wavelet-model}, $0<h\le1$,
and the atom's support meets $\Omega$.  There are a
polynomial $P_h\in\Pi_k$ and $H^m$-valued profiles
$\Psi_{h,\omega}$ such that
\[
 a_{h,x_0}=P_h+\int_{\mathbb S^{d-1}}\Psi_{h,\omega}\,\mathrm d\omega,
 \qquad
 \Psi_{h,\omega}=\int_c^{d_0}\zeta_{h,\omega}(b)
 \sigma_k(\omega\cdot x-b)\,\mathrm db.
\]
For this prescribed angular order,
\begin{equation}
 \max_{|\alpha|\le s_{\rm ang}}
 \|\nabla_\omega^\alpha\Psi_{h,\omega}\|_{H^m(\Omega)}
 \lesssim h^{\beta_m-k-s_{\rm ang}}.
 \label{eq:profile-angular}
\end{equation}
For every $L\ge1$ there is an unbiased random sum $R_{h,\omega,L}$ of at
most $C L$ atoms of $\mathbb D$ such that
\begin{equation}
 \sum_\ell|a_\ell(R_{h,\omega,L})|\lesssim h^{-k},
 \qquad
 \mathbb E\|\Psi_{h,\omega}-R_{h,\omega,L}\|_{H^m}^2
 \lesssim h^{2(\beta_m-k)}L^{-2r_m}.
 \label{eq:profile-bias}
\end{equation}
Finally, $P_h$ is exactly representable by a fixed number of atoms with
coefficient sum bounded by $Ch^{-k}$, uniformly in $x_0$ and the template.
\end{lemma}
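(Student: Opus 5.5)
We would build the representation by rescaling to a unit-scale atom, applying a localized Fourier--Radon (Fourier--Peano) identity at unit scale, and then dilating back. Write $a_{h,x_0}(x)=\psi((x-x_0)/h)$. By the Fourier inversion formula in polar coordinates, $\psi(y)=c_d\int_{\Sph^{d-1}}\int_0^\infty \widehat\psi(t\omega)e^{it\omega\cdot y}t^{d-1}\,dt\,d\omega$. For each $\omega$, the one-dimensional function $g_\omega(u):=\int_0^\infty\widehat\psi(t\omega)e^{itu}t^{d-1}dt$ is smooth with decay governed by the regularity of $\psi$.

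The central step is the Peano representation of $g_\omega$ on the bounded range of $u=\omega\cdot x$. For $x\in\Omega$ we have $u\in(c+1,d_0-1)$, and Taylor's formula with integral remainder of order $k$ gives
\[
g(u)=\sum_{j\le k}\frac{g^{(j)}(c)}{j!}(u-c)^j+\frac1{k!}\int_c^{d_0}g^{(k+1)}(b)(u-b)_+^k\,db .
\]
After dilation $u\mapsto (\omega\cdot x-\omega\cdot x_0)/h$, the truncated power becomes $h^{-k}\sigma_k(\omega\cdot x-b)$ with the bias shifted and scaled. The polynomial pieces integrate over $\omega$ to a polynomial $P_h\in\Pi_k$. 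This produces $\zeta_{h,\omega}(b)=h^{-k-1}z_\omega((b-\omega\cdot x_0)/h)$, where $z_\omega=c\,g_\omega^{(k+1)}$ is extended by zero off the shifted interval.

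Rather than expanding at the fixed endpoint $c$, we would expand around the center $\omega\cdot x_0/h$ and split the tail. This keeps the coefficients of $P_h$ of size $h^{-k}$ rather than exponentially large. Since $|x_0|$ is bounded and $(\cdot)/h$ stretches the interval, the Taylor coefficients $g^{(j)}(\omega\cdot x_0/h)$ decay, and the polynomial $((\omega\cdot x-\omega\cdot x_0)/h)^j$ has coefficients at most $h^{-j}\le h^{-k}$. Lemma~\ref{lem:poly-realization} then represents $P_h$ with coefficient sum $\lesssim h^{-k}$.

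For the bias estimate \eqref{eq:profile-bias}, substitute $b=\omega\cdot x_0+hs$ to write
\[
\Psi_{h,\omega}=h^{-k}\int_J z_\omega(s)V_h(s)\,ds,\qquad V_h(s)=\sigma_k(\omega\cdot x-\omega\cdot x_0-hs).
\]
Here $J$ is the rescaled bias interval. The map $s\mapsto V_h(s)$ is, by the computation in Lemma~\ref{lem:dictionary-interpolation} (one-dimensional, bias only), $C^{q_0,1/2}$ into $H^m$ with $q_0=k-m$. Its top seminorm picks up $h^{\beta_m}$ from the chain rule and from the slab measure $\lesssim h|s-s'|$. The decay $|z_\omega(s)|\lesssim(1+|s|)^{-\nu}$ with $\nu>\beta_m+1$ holds because $\widehat\psi$ decays rapidly once the template regularity is large: it follows by integrating by parts in $t$, which requires $\widehat\psi$ to vanish suitably at $t=0$ (vanishing moments of the wavelet templates; the scaling functions occur only at $h=1$ and are harmless). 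Lemma~\ref{lem:weighted-bias-cubature} then gives an unbiased sum with $CL$ atoms, mass $\lesssim h^{-k}$, and variance $h^{-2k}h^{2\beta_m}L^{-2r_m}$, which is exactly \eqref{eq:profile-bias}.

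For the angular bound \eqref{eq:profile-angular}, differentiate $\Psi_{h,\omega}=h^{-k}\int z_\omega(s)V_h(s)\,ds$ in $\omega$. Each derivative either falls on $z_\omega$, which is harmless since $\widehat\psi(t\omega)$ is smooth in $\omega$ with weights $t^{|\alpha|}$ still integrable, or on $V_h$ through $\omega\cdot(x-x_0)/h$. In the latter case, moving it to the $s$-variable by integration by parts (since $V_h$ depends on $\omega\cdot(x-x_0)-hs$) costs one factor $h^{-1}$ and one derivative of $z_\omega$ with a polynomial weight. After $s_{\rm ang}$ steps we obtain $h^{-k-s_{\rm ang}}$ times a profile integral whose $H^m$ norm is $O(h^{\beta_m})$. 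This last bound follows from the $L^2$ smallness of $\int z V_h$ at order $\beta_m$: $\int z$ against the polynomial part vanishes, and the remainder is controlled through the Hölder regularity just described.

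The main obstacle is this last bookkeeping, namely obtaining the sharp factor $h^{\beta_m}$ (not merely $O(1)$) for the $H^m$ norm of the bias profile, uniformly after angular differentiation. We would first try to prove it by using moment cancellation of $z_\omega$ up to order $q_0$, inherited from the vanishing moments of the templates. With this cancellation, $\int z(s)V_h(s)\,ds=\int z(s)\bigl(V_h(s)-T_{q_0}V_h(s)\bigr)ds$, and the Hölder remainder bound $\|V_h(s)-T_{q_0}V_h(s)\|_{H^m}\lesssim h^{\beta_m}|s|^{\beta_m}$ gives the estimate.
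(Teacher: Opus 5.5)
Your bias step matches the paper's Step~3, and your angular argument can be made to work. The gap is in your treatment of $P_h$: the centre expansion you propose would fail as written.

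The polynomial is not an independent choice, since $P_h=a_{h,x_0}-\int_{\Sph^{d-1}}\Psi_{h,\omega}\,d\omega$. Your bias and angular estimates both use the endpoint profile $\Psi_{h,\omega}=\frac{h^{-k}}{k!}\int_{J_{h,\omega}}z_\omega(s)V_h(s)\,ds$. This is the one-direction form over $[c,d_0]$ that the statement requires, and it forces $P_h$ to be the sphere average of the Taylor polynomials at $c$.

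If you instead expand at the centre, the remainder is $g_\omega(u)-\sum_{j\le k}g_\omega^{(j)}(0)u^j/j!$ with $u=\omega\cdot(x-x_0)/h$, and $u$ ranges over an interval of length $\simeq h^{-1}$ on $\Omega$. Two things go wrong.
\begin{itemize}
\item This profile needs ridges in both directions $\pm\omega$, with one-sided weights $z_\omega\mathbf 1_{\{s>0\}}$ and $z_\omega\mathbf 1_{\{s<0\}}$. These weights have no vanishing moments, so your cancellation argument is lost.
\item The profile carries a polynomial of $H^m(\Omega)$-size generically $\simeq h^{-k}$. Then \eqref{eq:profile-angular} already fails at $\alpha=0$ whenever $s_{\rm ang}\le k-m$, because $h^{-k}\gg h^{\beta_m-k-s_{\rm ang}}$.
\end{itemize}

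The motivation for the centre expansion is also mistaken: the endpoint polynomial is not large. For small $h$ the support condition keeps $|c-\omega\cdot x_0|$ bounded below. Hence $F_{h,\omega}^{(\ell)}(c)=h^{-\ell}g_\omega^{(\ell)}((c-\omega\cdot x_0)/h)=O(h^d)$, by the $|t|^{-d-\ell}$ decay of the unit profile. Even without that decay, the paper's Step~4 gives $\|P_h\|_{H^m}\le\|a_{h,x_0}\|_{H^m}+\sup_{|\omega|=1,\,c\le b\le d_0}\|\sigma_k(\omega\cdot x-b)\|_{H^m}\int\!\!\int|\zeta_{h,\omega}|\lesssim h^{-k}$. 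Lemma~\ref{lem:poly-realization} then yields the coefficient bound. So keep the endpoint expansion throughout and drop the centre expansion.

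The rest of your plan is sound once the source of cancellation is identified correctly. Neither the decay nor the moments of $z_\omega$ require vanishing moments of the templates. Use the real full-line symbol $\widehat z_\omega(r)=c_d(ir)^{k+1}|r|^{d-1}\widehat\psi(r\omega)$, which is the real part of your half-line $g_\omega$. Given enough template regularity, the factor $r^{k+1}|r|^{d-1}$ already gives $|z_\omega(s)|\lesssim(1+|s|)^{-k-d-1}$ for every template, scaling functions included. The paper obtains this from compact support of $\partial_t^{k+d}$ of the Radon transform for odd $d$, and from a moment expansion of its Hilbert transform for even $d$. The same factor gives $\int_\R s^jz_\omega(s)\,ds=0$ for $j\le k$. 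On the truncated window $J_{h,\omega}$ these moments are $O(h^{k+d-j})$, which costs only $O(h^d)$.

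With this in hand, your angular argument is a valid alternative to the paper's Step~2:
\begin{itemize}
\item Cancel the degree-$q_0$ Taylor part of $V_h(s)$ at $s=0$ against these moments.
\item Bound the remainder by $\|V_h(s)-T_{q_0}V_h(s)\|_{H^m}\lesssim h^{\beta_m}|s|^{\beta_m}$, the same H\"older estimate used for the bias cubature.
\item Trade each angular derivative via $\partial_eV_h=-h^{-1}(e\cdot(x-x_0))\,\partial_sV_h$ and integrate by parts in $s$. The endpoint terms on $J_{h,\omega}$ are $O(h^d)$, and the new weights $z_\omega'$ and $\partial_\omega z_\omega$ keep both the decay and the moments.
\end{itemize}

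The paper instead writes $\Psi_{h,\omega}=g_\omega(\omega\cdot(x-x_0)/h)$ minus the $O(h^d)$ Taylor polynomial at $c$. It then reads off $h^{1/2-|\alpha|-a}$ from Fubini and one-dimensional $L^2$ bounds on $\partial_t^\ell\nabla_\omega^a g_\omega$. That route avoids cancellation bookkeeping but needs pointwise far-field decay of the unit profile. Yours reuses the Step~3 regularity and never differentiates $V_h$ beyond order $q_0$ in $H^m$.
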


\begin{proof}
\textbf{Step 1: exact Fourier--Peano representation and profile decay.}
For a unit-scale template $A$, write its Radon transform as
\[
 (\mathcal RA)(t,\omega)=\int_{\omega^\perp}A(t\omega+y)\,dy.
\]
The Fourier-slice identity and the polar inversion used in
Lemma~\ref{lem:localized-atom-variation} show that the $(k+1)$st derivative
of the one-dimensional profile is
\begin{equation}
 F_{h,\omega}^{(k+1)}(b)
 =h^{-k-1}z_\omega
   \left(\frac{b-\omega\cdot x_0}{h}\right),
 \qquad
 \widehat z_\omega(r)=c_d(ir)^{k+1}|r|^{d-1}\widehat A(r\omega).
 \label{eq:profile-scaling}
\end{equation}
Taylor's formula with integral remainder on $[-c,c]$, followed by
integration in $\omega$, therefore gives
\[
 a_{h,x_0}=P_h+\int_{\Sph^{d-1}}\Psi_{h,\omega}\,d\omega,
 \qquad
 \zeta_{h,\omega}(b)=\frac1{k!}F_{h,\omega}^{(k+1)}(b).
\]

We record the decay needed below.  If $d$ is odd, $|r|^{d-1}=r^{d-1}$,
and Fourier inversion in \eqref{eq:profile-scaling} makes $z_\omega$ a constant multiple of
$\partial_t^{k+d}\mathcal RA(t,\omega)$; it has the required finite regularity and is compactly
supported in a fixed interval.  If $d$ is even, the same formula is a
Hilbert transform of $\partial_t^{k+d}\mathcal RA$.  The latter function is
compactly supported and has vanishing moments of orders $0,\ldots,k+d-1$.
For $|s|$ larger than twice its support radius, expand
\[
 \frac1{s-t}=\sum_{j=0}^{k+d-1}\frac{t^j}{s^{j+1}}
 +\frac{t^{k+d}}{s^{k+d}(s-t)}.
\]
The moment terms vanish, so the last term gives
\begin{equation}
 |z_\omega(s)|\lesssim (1+|s|)^{-k-d-1}.       \label{eq:profile-decay}
\end{equation}
Differentiation in a smooth angular chart only differentiates the Radon
transform and preserves compact support and the same moment cancellations.
Consequently, for every integer $0\le a\le s_{\rm ang}$,
\begin{equation}
 \sup_\omega|\nabla_\omega^az_\omega(s)|
 \lesssim (1+|s|)^{-k-d-1}.                  \label{eq:profile-angular-decay}
\end{equation}
The finite template family and its chosen wavelet regularity make the
constants in \eqref{eq:profile-decay}--\eqref{eq:profile-angular-decay}
uniform.  Integrating those bounds gives uniform $L^1$ control of the
profile and its prescribed angular derivatives.

\textbf{Step 2: the angular $H^m$ estimate.}
Define the unit-scale profile by
\[
 V_\omega(t)=c_d\int_\R e^{irt}|r|^{d-1}\widehat A(r\omega)\,dr,
 \qquad F_{h,\omega}(b)=
 V_\omega\left(\frac{b-\omega\cdot x_0}{h}\right).
\]
Fourier decay from the prescribed finite regularity of $A$ and
one-dimensional Plancherel imply uniform $L^2(\R)$ bounds for
$\partial_t^\ell\nabla_\omega^aV_\omega$ whenever
$\ell+a\le m+s_{\rm ang}$. We choose the template regularity large
enough for these finitely many bounds and for Step~1.

We also need control of the Taylor polynomial removed by the
truncated Peano formula. For odd $d$, $V_\omega$ is a derivative
of the compactly supported Radon transform. For even $d$, it is
a constant multiple of the Hilbert transform of
$\partial_t^{d-1}\mathcal RA$. Repeating the moment expansion in
Step~1, now with $d-1+\ell$ derivatives, gives
\[
 |\partial_t^\ell\nabla_\omega^aV_\omega(t)|
 \lesssim(1+|t|)^{-d-\ell}
\]
outside a fixed interval, for all the finite orders needed here.
Angular differentiation does not change the support radius or the
vanishing moments of the differentiated Radon profile.

Since $\omega\cdot x\in(c,d_0)$ on $\Omega$, Taylor's formula gives
the exact identity
\[
 \Psi_{h,\omega}(x)=
 V_\omega\left(\frac{\omega\cdot(x-x_0)}h\right)
 -\sum_{\ell=0}^k\frac{F_{h,\omega}^{(\ell)}(c)}{\ell!}
                    (\omega\cdot x-c)^\ell.
\]
For the first term, differentiation in $x$ and in an angular chart
produces bounded polynomial factors in $x-x_0$, together with at
most $h^{-|\alpha|-a}$ for $|\alpha|\le m$ and $a$ angular
derivatives. All relevant centres lie in a fixed bounded set.
Fubini in coordinates parallel and orthogonal to $\omega$, followed
by the uniform one-dimensional $L^2$ bounds, consequently gives
\[
 \left\|\nabla_\omega^aD_x^\alpha
 V_\omega\left(\frac{\omega\cdot(x-x_0)}h\right)
 \right\|_{L^2(\Omega)}
 \lesssim h^{1/2-|\alpha|-a}.
\]

The support-intersection condition implies
$\operatorname{dist}(x_0,\Omega)\lesssim h$. For sufficiently small
$h$, the fixed exterior bias $c$ is thus separated from
$\omega\cdot x_0$ by a positive constant. Every term in an
$a$th angular derivative of $F_{h,\omega}^{(\ell)}(c)$ is bounded
by
\[
 h^{-\ell-v}
 \left|\partial_t^{\ell+v}\nabla_\omega^{a'}V_\omega
       \left(\frac{c-\omega\cdot x_0}{h}\right)\right|
 \lesssim h^d,\qquad v+a'\le a.
\]
The same bound therefore holds for the $H^m$ norm of the
differentiated Taylor polynomial. The finitely many larger dyadic
scales have uniformly bounded profiles and centres and can be
absorbed into the constant. Summing over spatial derivatives gives
\[
 \|\nabla_\omega^a\Psi_{h,\omega}\|_{H^m(\Omega)}
 \lesssim h^{1/2-m-a}=h^{\beta_m-k-a},
 \qquad a\le s_{\rm ang},
\]
which proves \eqref{eq:profile-angular}, including all lower chart
derivatives required by Lemma~\ref{lem:stratified-cubature}.

\textbf{Step 3: high-order unbiased discretization of the bias.}
Put $q_0=k-m$, $\beta=q_0+1/2=\beta_m$, and
\[
 J_{h,\omega}=\left[\frac{c-\omega\cdot x_0}{h},
                  \frac{d_0-\omega\cdot x_0}{h}\right],
 \qquad
 V_{h,\omega}(s)(x)=\sigma_k(\omega\cdot x-\omega\cdot x_0-hs).
\]
After the substitution $b=\omega\cdot x_0+hs$,
\eqref{eq:profile-scaling} becomes
\begin{equation}
 \Psi_{h,\omega}
 =h^{-k}\int_{J_{h,\omega}}z_\omega(s)V_{h,\omega}(s)\,ds/k!.
 \label{eq:normalized-bias-profile}
\end{equation}
The map $V_{h,\omega}$ is uniformly bounded in $H^m(\Omega)$.  On an
interval $I$ of length $\delta$, interpolate it in $s$ with degree $q_0$.
Exactly as in the slab calculation in
Lemma~\ref{lem:dictionary-interpolation}, the $q_0$th parameter derivative
is $1/2$-H\"older: the moving hyperplane sweeps a slab of thickness
$h|s-s'|$.  Including the $q_0$ chain-rule factors of $h$ gives
\[
 [\partial_s^{q_0}V_{h,\omega}]_{C^{0,1/2}(J_{h,\omega};H^m)}
 \lesssim h^{q_0+1/2}.
\]
Consequently,
\begin{equation}
 \|V_{h,\omega}-I_IV_{h,\omega}\|_{L^\infty(I;H^m)}
 \lesssim \min\{1,h^{q_0+1/2}\delta^{q_0+1/2}\}.
 \label{eq:bias-interpolation-remainder}
\end{equation}
Since $k+d+1>\beta_m+1$,
\eqref{eq:profile-decay}, \eqref{eq:normalized-bias-profile}, and
\eqref{eq:bias-interpolation-remainder} allow us to apply
Lemma~\ref{lem:weighted-bias-cubature}.  Multiplication by $h^{-k}/k!$
gives a formula with $CL$ atoms,
\[
 \mathbb E\|\Psi_{h,\omega}-R_{h,\omega,L}\|_{H^m}^2
 \lesssim h^{-2k}h^{2\beta_m}L^{-2(k+1-m)},
\]
which is the mean-square estimate in \eqref{eq:profile-bias}.

The bias formula is unbiased cell by cell, including the two tails.
Its pathwise coefficient mass is at most
\[
 Ch^{-k}\int_\R|z_\omega(s)|(1+|s|)^{\beta_m}\,ds
 \lesssim h^{-k},
\]
because \eqref{eq:profile-decay} has decay exponent
$k+d+1>\beta_m+1$.

\textbf{Step 4: the polynomial term.}
The exact profile identity \eqref{eq:profile-scaling} gives
\[
 \int_{\Sph^{d-1}}\int_c^{d_0}|\zeta_{h,\omega}(b)|\,db\,d\omega
 \lesssim h^{-k}.
\]
Every dictionary atom has uniformly bounded $H^m(\Omega)$ norm.  The Peano
identity therefore implies
$\|P_h\|_{H^m(\Omega)}\le
\|a_{h,x_0}\|_{H^m}+Ch^{-k}\lesssim h^{-k}$.
Lemma~\ref{lem:poly-realization} represents $P_h$ exactly by a fixed number
of dictionary atoms with coefficient mass $Ch^{-k}$, completing the
representation and budget estimates.
\end{proof}

Lemma~\ref{lem:localized-profile-estimates} controls angular
differentiation and bias discretization separately.  Balancing the numbers
of angular cells and bias samples combines those estimates into one atom
approximant with the angular width factor $h^{-(d-1)}$.

\begin{proposition}[Localized unit-amplitude atom oracle]
\label{prop:localized-atom-oracle}
For an integer $s\ge1$ set
\[
 \rho_s:=r_m\frac{\frac12+s/(d-1)}{r_m+s/(d-1)}.
\]
Then $\rho_s\uparrow r_m$.  For every $z\ge1$ there is an unbiased random
finite dictionary network $Q_{h,z}$ such that
\begin{align*}
 \operatorname{width}(Q_{h,z})&\lesssim h^{-(d-1)}z,\\
 \sum_\ell|a_\ell(Q_{h,z})|&\lesssim h^{-k},\\
 \mathbb E\|a_{h,x_0}-Q_{h,z}\|_{H^m}^2
 &\lesssim h^{2(\tau_m-k)}z^{-2\rho_s}.
\end{align*}
Here $a_{h,x_0}$ is an atom from
Lemma~\ref{lem:localized-profile-estimates}, with angular order $s$
and the corresponding fixed finite template regularity.
All constants are uniform over this family and its admissible centres.
\end{proposition}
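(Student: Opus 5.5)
We would build $Q_{h,z}$ as a two-level unbiased scheme. The outer level is the stratified spherical cubature of Lemma~\ref{lem:stratified-cubature}, applied on $Y=\mathbb S^{d-1}$ with $r=d-1$ and smoothness $s$ to $G(\omega)=\Psi_{h,\omega}$. The inner level replaces each selected profile by the bias formula $R_{h,\omega_i,L}$ of Lemma~\ref{lem:localized-profile-estimates}. By that lemma, $a_{h,x_0}=P_h+\int\Psi_{h,\omega}\,d\omega$ and $P_h$ is realized exactly by a fixed number of atoms with mass $Ch^{-k}$. We therefore set
\[
 Q_{h,z}:=P_h+\sum_{i=1}^{N_K}w_iR^{(i)}_{h,\omega_i,L},
\]
with the inner formulas drawn independently, conditionally on the angular nodes $\omega_i$. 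Its width is $O(KL)$ plus a constant. Its coefficient mass is at most $Ch^{-k}\bigl(1+\sum_i|w_i|\bigr)\lesssim h^{-k}$, by the pathwise bound $\sum_i|w_i|\lesssim1$ and \eqref{eq:profile-bias}. Unbiasedness follows by taking first the conditional expectation over the bias randomness and then the expectation over the angular randomness.

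For the error we split
\[
 a_{h,x_0}-Q_{h,z}=\Bigl(\int\Psi\,d\omega-\sum_iw_i\Psi_{h,\omega_i}\Bigr)+\sum_iw_i\bigl(\Psi_{h,\omega_i}-R^{(i)}\bigr).
\]
Conditionally on the nodes, the second sum has mean zero and independent summands. Hence the cross term vanishes in expectation, and the second term contributes at most $\sum_i|w_i|^2\,h^{2(\beta_m-k)}L^{-2r_m}\lesssim K^{-1}h^{2(\beta_m-k)}L^{-2r_m}$. The first term is bounded by Lemma~\ref{lem:stratified-cubature} together with \eqref{eq:profile-angular}, which gives $K^{-1-2s/(d-1)}h^{2(\beta_m-k-s)}$. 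The $C^s$ hypothesis is exactly what \eqref{eq:profile-angular} supplies in every chart.

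Next we write $K=h^{-(d-1)}\kappa$ and use the identity $\tau_m-k=\beta_m-k+(d-1)/2$. Both contributions then carry the common factor $h^{2(\tau_m-k)}$, and the total mean-square error is
\[
 h^{2(\tau_m-k)}\bigl(\kappa^{-1-2s/(d-1)}+\kappa^{-1}L^{-2r_m}\bigr).
\]
Under the width constraint $\kappa L\simeq z$, we balance $\kappa^{s/(d-1)}=L^{r_m}$ by taking $\kappa=z^{a}$ and $L=z^{1-a}$ with $a=r_m/(r_m+s/(d-1))$. This gives the error $z^{-a(1+2s/(d-1))}=z^{-2\rho_s}$. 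The width is $KL\simeq h^{-(d-1)}z$, as required. The monotonicity $\rho_s\uparrow r_m$ is elementary, since $\rho_s$ is a Möbius function of $s/(d-1)$ that tends to $r_m$.

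The main obstacle is bookkeeping rather than analysis. First, $K$ and $L$ must be integers $\ge1$. For $h$ near $1$ or small $z$ we would round up, which costs only constants, and the finitely many coarse dyadic scales are absorbed as in Lemma~\ref{lem:localized-profile-estimates}. Second, the nested random construction must be measurable so that conditional expectations are legitimate. Choosing all inner randomness as independent uniform variables pushed forward through the explicit cell formulas handles this. Uniformity over templates and centres is inherited from the uniform constants in Lemmas~\ref{lem:stratified-cubature} and \ref{lem:localized-profile-estimates}.
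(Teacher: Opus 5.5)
Your proposal is correct and follows the paper's proof essentially step for step. It uses stratified cubature on $\mathbb S^{d-1}$ with $K$ cells, independent copies of the bias formula \eqref{eq:profile-bias} at each angular node, and exact insertion of $P_h$. It then derives the same two-term mean-square bound and makes the same balancing choice $K\simeq h^{-(d-1)}z^{r_m/(r_m+s/(d-1))}$, $L\simeq z^{(s/(d-1))/(r_m+s/(d-1))}$.
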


\begin{proof}
Write $r=r_m$ and use
Lemma~\ref{lem:localized-profile-estimates}.  Apply
Lemma~\ref{lem:stratified-cubature} on $\mathbb S^{d-1}$ with $K$ angular
cells.  By \eqref{eq:profile-angular}, the angular mean-square error is
\[
 C_sh^{2(\beta_m-k-s)}K^{-1-2s/(d-1)}.
\]
Replace every profile evaluation occurring in the cubature formula by an
independent copy of the $L$-point bias formula \eqref{eq:profile-bias}.  The squared angular
weights sum to $O(K^{-1})$, so independence gives the additional error
\[
 C_sh^{2(\beta_m-k)}K^{-1}L^{-2r}.
\]
The polynomial remainder is inserted exactly.  Thus
\begin{equation}
 \mathbb E\|a_{h,x_0}-Q\|_{H^m}^2
 \lesssim \left[
 h^{2(\beta_m-k-s)}K^{-1-2s/(d-1)}
 +h^{2(\beta_m-k)}K^{-1}L^{-2r}\right].
 \label{eq:oracle-mse}
\end{equation}
The pathwise coefficient budget is $O(h^{-k})$ because the cubature weights
have bounded $\ell^1$ mass and every bias formula has that budget.
The width is $O(KL)$.

Put $T=h^{-(d-1)}z$ and choose integers comparable to
\[
 K=h^{-(d-1)}z^{r/(r+s/(d-1))},
 \qquad
 L=z^{(s/(d-1))/(r+s/(d-1))}.
\]
Then $KL\simeq h^{-(d-1)}z$.  The two terms in \eqref{eq:oracle-mse} are comparable, and a
direct substitution gives
\[
 h^{2(\beta_m-k-s)}K^{-1-2s/(d-1)}
 =h^{2(\beta_m-k)+d-1}
 z^{-r(1+2s/(d-1))/(r+s/(d-1))}
 =h^{2(\tau_m-k)}z^{-2\rho_s},
\]
because $2(\beta_m-k)+d-1=2(\tau_m-k)$.  Rounding $K,L$ changes only the constant.
Unbiasedness follows successively from the angular and bias formulas.
Finally, the displayed definition shows $\rho_s\to r_m$ as $s\to\infty$.
\end{proof}

The atom approximation improves as more neurons are assigned to it.
The following sequence estimate distributes that local cost among
coefficients and controls the coordinates that are omitted.

\begin{lemma}[Sequence resource allocation]\label{lem:resource}
Let $0<p_0<2$ and
\[
a_0:=\frac1{p_0}-\frac12>0.
\]
Assume $\rho>a_0$.  If $\lambda=(\lambda_r)\in\ell^{p_0}$ and $b=\|\lambda\|_{\ell^{p_0}}$, then for every $T\ge1$ there exist numbers
\[
z_r\in\{0\}\cup[1,\infty)
\]
such that
\[
\sum_{z_r>0}z_r\lesssim T
\]
and
\[
\sum_{z_r>0}|\lambda_r|^2z_r^{-2\rho}
+\sum_{z_r=0}|\lambda_r|^2
\lesssim b^2T^{-2a_0}.
\]
\end{lemma}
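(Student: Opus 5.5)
Normalize by homogeneity to $b=1$ and order the coefficients by decreasing magnitude, $|\lambda_1^\ast|\ge|\lambda_2^\ast|\ge\cdots$. The standard weak-$\ell^{p_0}$ bound for a nonincreasing rearrangement gives
\[
 |\lambda_j^\ast|\le j^{-1/p_0}.
\]
The plan has three steps: a support truncation, a choice of multiplicities on the support, and a check that the resulting sums converge.

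\emph{Truncation.} Keep the $\lfloor T\rfloor$ largest coordinates and give every other coordinate $z_r=0$. The discarded tail obeys
\[
 \sum_{j>T}|\lambda_j^\ast|^2
 \le |\lambda_{\lfloor T\rfloor}^\ast|^{2-p_0}\sum_{j}|\lambda_j^\ast|^{p_0}
 \lesssim T^{-(2-p_0)/p_0}=T^{-2a_0},
\]
using $p_0<2$ and $(2-p_0)/p_0=2a_0$.

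\emph{Allocation.} For the retained indices, assign resources polynomially in the rank:
\[
 z_j:=T\,j^{-\theta}\Big/\sum_{i\le T}i^{-\theta}\ge1 .
\]
This gives $\sum_j z_j\le T$ automatically. The exponent $\theta$ must be chosen so that the lower bound $z_j\ge1$ holds and the error sum closes. A simpler alternative is the rule $z_j\simeq T/j$ for $j\le T$, with a harmless logarithmic normalization, which also satisfies $z_j\ge1$. With $z_j\simeq T j^{-\theta}$ for some $0\le\theta<1$, the normalizer is $\sum_{i\le T}i^{-\theta}\simeq T^{1-\theta}$, so $z_j\simeq T^{\theta}j^{-\theta}\ge1$ for $j\le T$. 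The error on the retained part is then
\[
 \sum_{j\le T}j^{-2/p_0}\,T^{-2\rho\theta}j^{2\rho\theta}
 =T^{-2\rho\theta}\sum_{j\le T}j^{2\rho\theta-2/p_0}.
\]

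\emph{Choice of $\theta$ and the main obstacle.} This is the step I expect to be delicate: choosing $\theta$ so that the retained sum is $O(T^{-2a_0})$ without a logarithm. Since $\rho>a_0$, pick $\theta$ with
\[
 \frac{2/p_0-1}{2\rho}<\theta<1,
\]
which is possible because $2/p_0-1=2a_0<2\rho$. Then $2\rho\theta-2/p_0>-1$, so the sum is dominated by its top end:
\[
 \sum_{j\le T}j^{2\rho\theta-2/p_0}\simeq T^{2\rho\theta-2/p_0+1}.
\]
Multiplying by $T^{-2\rho\theta}$ gives
\[
 T^{1-2/p_0}=T^{-2a_0},
\]
with constants depending on $\rho-a_0$. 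The strict inequality $\rho>a_0$ is exactly what keeps this exponent away from the borderline $-1$, where a $\log T$ would appear.

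Finally, combining the truncation and allocation estimates and undoing the normalization by $b^2$ gives the stated bound.
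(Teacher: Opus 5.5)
Your proof is essentially the paper's: truncate to the $\simeq T$ largest coefficients, allocate $z_j\simeq(T/j)^\theta$ with $a_0/\rho<\theta<1$ (the paper's $\eta$), and use $2\rho\theta-2/p_0>-1$ to sum the retained part without a logarithm. Your tail bound via $|\lambda^\ast_{\lfloor T\rfloor}|^{2-p_0}\|\lambda\|_{\ell^{p_0}}^{p_0}$ is an equally valid variant of the paper's $\sum_{r>M}r^{-2/p_0}$ estimate.

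Two details need fixing. First, the exactly normalized choice $z_j=Tj^{-\theta}/\sum_{i\le T}i^{-\theta}$ is only about $1-\theta<1$ near $j=T$. Use the unnormalized $z_j=(T/j)^\theta$ instead: the lemma only asks for $\sum z_j\lesssim T$, while $z_j\ge1$ is genuinely needed to invoke the atom oracle. Second, discard the ``simpler alternative'' $z_j\simeq T/j$. Unnormalized it spends $T\log T$, and once normalized by $\log T$ it breaks $z_j\ge1$ and produces a $(\log T)^{2\rho}$ loss in the error.
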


\begin{proof}
Rearrange the coefficients so that $\lambda_r^*$ is nonincreasing.  Since
\[
 r(\lambda_r^*)^{p_0}
 \le\sum_{i=1}^r(\lambda_i^*)^{p_0}
 \le b^{p_0},
\]
we have
\[
\lambda_r^*\le br^{-1/p_0}=br^{-a_0-1/2}.
\]
Choose
\[
\frac{a_0}{\rho}<\eta<1,
\]
put $M=\max\{1,\lfloor c_\eta T\rfloor\}$, and define
\[
z_r=(M/r)^\eta\quad(1\le r\le M),
\qquad
z_r=0\quad(r>M).
\]
Since $\eta<1$,
\[
\sum_{r=1}^{M}z_r
=M^\eta\sum_{r=1}^{M}r^{-\eta}
\lesssim  M.
\]
Choosing $c_\eta$ small makes this at most $CT$ with the desired fixed
constant.  When $1\le T<2/c_\eta$, the choice $M=1$ changes all subsequent
estimates only by a constant because $T\simeq1$; below we may therefore
assume $M\simeq T$.

For the active coordinates,
\[
\sum_{r\le M}(\lambda_r^*)^2z_r^{-2\rho}
\le b^2M^{-2\eta\rho}
\sum_{r\le M}r^{-2a_0-1+2\eta\rho}.
\]
Because $\eta\rho>a_0$, the exponent in the last sum is greater than $-1$, hence
\[
\sum_{r\le M}r^{-2a_0-1+2\eta\rho}
\lesssim M^{-2a_0+2\eta\rho}.
\]
Thus the active contribution is at most $Cb^2M^{-2a_0}$.  The omitted tail satisfies
\[
\sum_{r>M}(\lambda_r^*)^2
\le b^2\sum_{r>M}r^{-2a_0-1}
\lesssim b^2M^{-2a_0}.
\]
Since $M\simeq T$, the result follows.
\end{proof}

Combining the sequence allocation with independent atom approximants
gives a bound on one scale. Its proof retains the omitted-coordinate
energy, which is needed when different scales are later combined.

\begin{lemma}[Stable estimate on one wavelet scale]\label{lem:one-scale}
Let
\[
f_j=\sum_\nu \lambda_{j,\nu}a_{j,\nu}.
\]
Suppose $0<p_0<2$, put $a_0=1/p_0-1/2$, and choose $s$ in Proposition~\ref{prop:localized-atom-oracle} so that $\rho_s>a_0$.  For every integer $N\ge0$ there exists a deterministic dictionary network $G_{j,N}$ satisfying
\[
\operatorname{width}(G_{j,N})\lesssim N,
\qquad
\sum_\ell|a_\ell(G_{j,N})|
\lesssim h_j^{-k}\sum_\nu|\lambda_{j,\nu}|,
\]
and
\[
\|f_j-G_{j,N}\|_{H^m}
\lesssim h_j^{-k}\|(\lambda_{j,\nu})\|_{\ell^{p_0}}
 h_j^{\tau_m}(1+Nh_j^{d-1})^{-a_0}.
\]
\end{lemma}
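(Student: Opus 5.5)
The plan is to split on whether $N$ is large enough to give every atom at scale $j$ its full angular width. Put $T:=1+Nh_j^{d-1}$ and $w_j:=h_j^{-(d-1)}$, so that $T$ is, up to constants, the number of atom slots of angular width $w_j$ that the budget $N$ can pay for. If $N<h_j^{-(d-1)}$, then $T\simeq1$. In that case take $G_{j,N}=0$ and use the Bessel bound \eqref{eq:critical-bessel} with $\eta_{j,\nu}=\lambda_{j,\nu}$:
\[
 \|f_j\|_{H^m}\lesssim h_j^{\tau_m-k}\|(\lambda_{j,\nu})\|_{\ell^2}\le h_j^{\tau_m-k}\|(\lambda_{j,\nu})\|_{\ell^{p_0}}
\]
whenever $p_0\le2$, since $\ell^{p_0}\hookrightarrow\ell^2$. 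This is already the claimed bound, and both the width and the budget are zero.

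If $N\ge h_j^{-(d-1)}$, apply Lemma~\ref{lem:resource} to the sequence $(\lambda_{j,\nu})_\nu$ with $T\simeq Nh_j^{d-1}$ and $\rho=\rho_s>a_0$. This gives integers $z_\nu\in\{0\}\cup[1,\infty)$ with $\sum z_\nu\lesssim T$. For every active index, where $z_\nu\ge1$, take an independent copy of the oracle $Q_{h_j,z_\nu}$ of Proposition~\ref{prop:localized-atom-oracle} approximating $a_{j,\nu}$, and form the random network $\mathcal G=\sum_{z_\nu>0}\lambda_{j,\nu}Q_{h_j,z_\nu}$. Its width is
\[
 \lesssim h_j^{-(d-1)}\sum z_\nu\lesssim N,
\]
and pathwise its coefficient mass is $\lesssim h_j^{-k}\sum_\nu|\lambda_{j,\nu}|$.

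The error splits into two parts:
\[
 f_j-\mathcal G=\sum_{z_\nu=0}\lambda_{j,\nu}a_{j,\nu}+\sum_{z_\nu>0}\lambda_{j,\nu}(a_{j,\nu}-Q_\nu).
\]
The omitted part is handled by \eqref{eq:critical-bessel}, which bounds it by
\[
 h_j^{\tau_m-k}\Bigl(\sum_{z_\nu=0}|\lambda_{j,\nu}|^2\Bigr)^{1/2}.
\]
The active part is a sum of independent mean-zero $H^m$-valued random variables, using the unbiasedness of the oracles. Its expected squared norm therefore equals the sum of the individual variances, which is
\[
 \lesssim h_j^{2(\tau_m-k)}\sum_{z_\nu>0}|\lambda_{j,\nu}|^2z_\nu^{-2\rho_s}.
\]
By Lemma~\ref{lem:resource} the two contributions together are bounded in mean square by
\[
 h_j^{2(\tau_m-k)}b^2T^{-2a_0},\qquad b=\|(\lambda_{j,\nu})\|_{\ell^{p_0}}.
\]
Since the width and budget bounds hold for every realization, some realization satisfies the error bound. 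That realization is the deterministic $G_{j,N}$.

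I expect the main obstacle to be justifying that the random errors at different centres are orthogonal in expectation. This needs the oracles to be genuinely unbiased and independent; cross terms vanish by independence, and Bochner means commute with the inner product. It also needs $\Lambda_j$ to be finite, as Lemma~\ref{lem:critical-wavelet-model} guarantees, so the sums are finite. One further check is that Bessel's bound is used only for the deterministic omitted part and not for the random errors, because the errors $a_{j,\nu}-Q_\nu$ are not themselves wavelet atoms.
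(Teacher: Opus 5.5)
Your proposal is correct and follows the paper's proof essentially step by step. When $Nh_j^{d-1}<1$ you take the zero network and use the same-scale Bessel bound; otherwise you apply Lemma~\ref{lem:resource} with independent unbiased oracles, control the omitted coordinates by Bessel, and select a realization that achieves the mean-square bound. There are two minor differences. First, you bound the full mean-square error at once, which works because the cross term with the deterministic omitted part vanishes by unbiasedness; the paper instead fixes a realization for the random part and adds the omitted part by the triangle inequality. Second, the costs $z_\nu$ from Lemma~\ref{lem:resource} are reals in $\{0\}\cup[1,\infty)$ rather than integers, and the oracle accepts these.
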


\begin{proof}
If $Nh_j^{d-1}<1$, take $G_{j,N}=0$.  The same-scale case of the Bessel estimate gives
\[
\|f_j\|_{H^m}^2
\lesssim h_j^{2(\tau_m-k)}\|(\lambda_{j,\nu})\|_{\ell^2}^2
\lesssim h_j^{2(\tau_m-k)}\|(\lambda_{j,\nu})\|_{\ell^{p_0}}^2,
\]
and $1+Nh_j^{d-1}\simeq1$.

Assume now $Nh_j^{d-1}\ge1$ and put
\[
T\simeq Nh_j^{d-1}.
\]
Apply Lemma~\ref{lem:resource} to the coefficient vector
$\lambda_j=(\lambda_{j,\nu})_\nu$ with $\rho=\rho_s$, obtaining costs
$z_\nu$.  For each active coordinate use an independent oracle
$Q_{j,\nu,z_\nu}$ from Proposition~\ref{prop:localized-atom-oracle}; omit
the coordinates with $z_\nu=0$.  Define
\[
D_{j,N}:=\sum_{z_\nu=0}\lambda_{j,\nu}a_{j,\nu},
\qquad
\widetilde G_{j,N}:=\sum_{z_\nu>0}\lambda_{j,\nu}Q_{j,\nu,z_\nu}.
\]
Unbiasedness gives $\mathbb E\widetilde G_{j,N}=f_j-D_{j,N}$.  Independence and zero mean eliminate cross terms among active coordinates, while the same-scale Bessel estimate controls the deterministic omitted part.  Therefore
\[
\mathbb E\|f_j-D_{j,N}-\widetilde G_{j,N}\|_{H^m}^2
+\|D_{j,N}\|_{H^m}^2
\]
\[
\lesssim h_j^{2(\tau_m-k)}
\left(
\sum_{z_\nu>0}|\lambda_{j,\nu}|^2z_\nu^{-2\rho_s}
+\sum_{z_\nu=0}|\lambda_{j,\nu}|^2
\right)
\lesssim \|\lambda_j\|_{\ell^{p_0}}^2h_j^{2(\tau_m-k)}T^{-2a_0}.
\]
By Proposition~\ref{prop:localized-atom-oracle} and the resource bound,
\[
\operatorname{width}(\widetilde G_{j,N})
\lesssim h_j^{-(d-1)}\sum_{z_\nu>0}z_\nu
\lesssim h_j^{-(d-1)}T\lesssim N.
\]
The pathwise output budget is
\[
\sum_\ell|a_\ell(\widetilde G_{j,N})|
\lesssim h_j^{-k}\sum_{z_\nu>0}|\lambda_{j,\nu}|
\lesssim h_j^{-k}\sum_\nu|\lambda_{j,\nu}|.
\]
Choose a realization whose squared random error does not exceed its expectation and then add the deterministic omitted error by the triangle inequality.  This gives the stated deterministic $G_{j,N}$.
\end{proof}

The endpoint estimate will be assembled directly from these scale
bounds. For larger $p$, we also need a decomposition between the
endpoint and the $p=1$ class that preserves the actual coefficient mass.

\begin{lemma}[Critical disjoint-coordinate splitting]\label{lem:splitting}
Let $0<p_0<p<1$ and
\[
\theta:=\frac{1/p-1}{1/p_0-1}\in(0,1).
\]
For every $\lambda\in\ell^p$ and $0<t\le1$, there is a decomposition into disjoint coordinate sets
\[
\lambda=\lambda^{(0)}+\lambda^{(1)}
\]
such that
\[
\|\lambda^{(0)}\|_{\ell^1}+t\|\lambda^{(1)}\|_{\ell^{p_0}}
\lesssim t^\theta\|\lambda\|_{\ell^p},
\]
and
\[
\|\lambda^{(0)}\|_{\ell^1}+\|\lambda^{(1)}\|_{\ell^1}
=\|\lambda\|_{\ell^1}.
\]
\end{lemma}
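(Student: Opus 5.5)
Since the two pieces live on disjoint coordinate sets, the $\ell^1$ identity $\|\lambda^{(0)}\|_{\ell^1}+\|\lambda^{(1)}\|_{\ell^1}=\|\lambda\|_{\ell^1}$ is automatic. The only task is therefore to choose the splitting threshold so that the first inequality holds. The plan is a standard large/small coefficient split, of the kind used for $K$-functionals between $\ell^1$ and $\ell^{p_0}$, but carried out with a sharp cut in magnitude rather than a truncation of values. By homogeneity we may assume $\|\lambda\|_{\ell^p}=1$; the case $\lambda=0$ is trivial. For a threshold $\varepsilon>0$ to be fixed later, put
\[
 \lambda^{(1)}:=\lambda\,\mathbf 1_{\{|\lambda_r|>\varepsilon\}},
 \qquad
 \lambda^{(0)}:=\lambda\,\mathbf 1_{\{|\lambda_r|\le\varepsilon\}}.
\]
The large coefficients are placed in $\lambda^{(1)}$ and measured in $\ell^{p_0}$ with the small weight $t$. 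The small coefficients are placed in $\lambda^{(0)}$ and measured in $\ell^1$.

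The two norms are then estimated using only $\sum_r|\lambda_r|^p\le1$. For the small part, $|\lambda_r|\le\varepsilon$ and $p<1$ give $|\lambda_r|=|\lambda_r|^p|\lambda_r|^{1-p}\le\varepsilon^{1-p}|\lambda_r|^p$. Hence $\|\lambda^{(0)}\|_{\ell^1}\le\varepsilon^{1-p}$. For the large part, $p_0<p$ and $|\lambda_r|>\varepsilon$ give $|\lambda_r|^{p_0}=|\lambda_r|^{p}|\lambda_r|^{p_0-p}\le\varepsilon^{p_0-p}|\lambda_r|^p$. Hence $\|\lambda^{(1)}\|_{\ell^{p_0}}\le\varepsilon^{(p_0-p)/p_0}=\varepsilon^{1-p/p_0}$, which is a negative power of $\varepsilon$. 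The left-hand side of the claimed inequality is therefore at most $\varepsilon^{1-p}+t\,\varepsilon^{1-p/p_0}$.

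The remaining step is to balance the two terms and check that the exponent is exactly $\theta$. Equating $\varepsilon^{1-p}=t\varepsilon^{1-p/p_0}$ gives $\varepsilon^{p/p_0-p}=t$, that is,
\[
 \varepsilon=t^{1/(p(1/p_0-1))}.
\]
Both terms then equal
\[
 \varepsilon^{1-p}=t^{(1-p)/(p(1/p_0-1))}=t^{(1/p-1)/(1/p_0-1)}=t^\theta,
\]
which proves the bound, with constant $2$ and no dependence on $t$. The hypothesis $t\le1$ only ensures $\varepsilon\le1$; this is harmless, and it matches the normalization used later. I expect no genuine obstacle. The one point requiring care is the bookkeeping of the exponents, namely confirming that the balancing exponent coincides with the interpolation parameter $\theta=(1/p-1)/(1/p_0-1)$ rather than with some other Lions--Peetre parameter. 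It is also worth recording that the splitting is by coordinates, not by value truncation, since this is what gives the exact $\ell^1$ mass identity needed to keep the coefficient budget fixed.
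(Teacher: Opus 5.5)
Your proof is correct and is essentially the paper's argument. The paper also splits off the large coefficients, but it does so by rank: it places the $K\simeq t^{-1/(1/p_0-1)}$ largest entries of the nonincreasing rearrangement into $\lambda^{(1)}$ and bounds both pieces using $\lambda_r^*\le\|\lambda\|_{\ell^p}r^{-1/p}$ together with power-sum estimates.

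Your magnitude threshold selects at most $\varepsilon^{-p}\|\lambda\|_{\ell^p}^p$ coordinates, which after your normalization is exactly this $K$. Your pointwise comparisons $|\lambda_r|\le\varepsilon^{1-p}|\lambda_r|^p$ and $|\lambda_r|^{p_0}\le\varepsilon^{p_0-p}|\lambda_r|^p$ avoid the rearrangement and give the explicit constant $2$.
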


\begin{proof}
Let $b=\|\lambda\|_{\ell^p}$ and rearrange $|\lambda_r|$ in nonincreasing order.  Then
\[
\lambda_r^*\le br^{-1/p}.
\]
Choose
\[
K\simeq t^{-1/(1/p_0-1)}.
\]
Let $\lambda^{(1)}$ contain the first $K$ coordinates and $\lambda^{(0)}$ the remaining tail. Then
\[
\|\lambda^{(0)}\|_{\ell^1}
\le b\sum_{r>k}r^{-1/p}
\lesssim bk^{1-1/p},
\]
and, since $p_0<p$,
\[
\|\lambda^{(1)}\|_{\ell^{p_0}}^{p_0}
\le b^{p_0}\sum_{r\le K}r^{-p_0/p}
\lesssim b^{p_0}K^{1-p_0/p},
\]
so
\[
\|\lambda^{(1)}\|_{\ell^{p_0}}
\lesssim bK^{1/p_0-1/p}.
\]
By the choice of $K$,
\[
K^{1-1/p}\simeq t^\theta,
\qquad
tK^{1/p_0-1/p}\simeq t^\theta.
\]
This proves the first estimate.  The supports are disjoint by construction, hence the $\ell^1$ masses add exactly.
\end{proof}

\subsection{The three parameter regimes}

The estimates above are uniform in the scale and in the wavelet position.
Their summation changes at the threshold $p_\ast$, which leads to the
following three-case theorem.

\begin{theorem}[Stable fixed-$\ell^1$-budget upper bound]\label{thm:pure-upper}
Let $d\ge3$, $0<p<1$, $0<q\le1$, and $0\le m\le k$.  There is $C>0$ such that for every
$f\in B_{p,q}^{k+d/p}(\Omega)$ and every $n\ge1$,
\[
\sigma_{n,m}
\left(f;C\|f\|_{B_{p,q}^{k+d/p}(\Omega)}\right)
\lesssim \|f\|_{B_{p,q}^{k+d/p}(\Omega)}
 n^{-\Gamma_{p,m}}L_n^{\kappa_{p,q,m}},
\]
where
\[
\kappa_{p,q,m}=
\begin{cases}
0,&0<p<p_\ast,\\[0.3em]
\left(A_m+\frac12-\frac1q\right)_+,&p=p_\ast,\\[0.6em]
\displaystyle
\frac{1/p-1}{1/p_\ast-1}
\left(A_m+\frac12-\frac1q\right)_+,
&p_\ast<p<1.
\end{cases}
\]
The output budget is independent of $n$.
\end{theorem}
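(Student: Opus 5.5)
The plan is to build the approximant scale by scale from the critical wavelet expansion of Lemma~\ref{lem:critical-wavelet-model}, write $f=\sum_j f_j$ with $f_j=\sum_\nu\lambda_{j,\nu}a_{j,\nu}$, and approximate each $f_j$ by Lemma~\ref{lem:one-scale} with a width $N_j$ satisfying $\sum_jN_j\lesssim n$. Each oracle in Proposition~\ref{prop:localized-atom-oracle} carries an exact polynomial term $P_h$. I will sum all of these into one polynomial of $\Pi_k$ instead of realizing them atom by atom. Its $H^m$ norm is controlled as in Proposition~\ref{prop:pure-transfer}, so Lemma~\ref{lem:poly-realization} realizes it with $J$ extra atoms and bounded mass. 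The pathwise output budget is then at most
\[
 \sum_jh_j^{-k}\|\lambda_j\|_{\ell^1}\le\sum_j b_j\lesssim\|f\|_{B^{k+d/p}_{p,q}},
\]
by \eqref{eq:critical-global-l1}, independently of $n$ and of the allocation. It remains to choose $(N_j)$ and to combine the scale errors. For this I keep the randomized form of Lemma~\ref{lem:one-scale}: the random errors at different scales are independent and centred, and the omitted deterministic parts obey the cross-scale Bessel bound \eqref{eq:critical-bessel}. Hence the squared errors add in $\ell^2$ over $j$, and only at the end do I pick a realization.

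Set $h_\ast=n^{-1/(d-1)}$, i.e. $j_\ast\simeq\log_2 n/(d-1)$. Note that $1/p-1/2>A_m$ holds exactly when $p<p_\ast$, since $1/p_\ast-1/2=A_m$. \emph{Case $p<p_\ast$.} I take $p_0=p$, $a_0=1/p-1/2>A_m$, and $N_j=n2^{-\varepsilon|j-j_\ast|}$ for $j\le j_\ast$, with $N_j=0$ for $j>j_\ast$. For $j\le j_\ast$ the scale error is
\[
 b_jh_\ast^{\tau_m}2^{-(j_\ast-j)(a_0(d-1-\varepsilon)-\tau_m)},
\]
which is geometrically decaying once $\varepsilon$ is small. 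For $j>j_\ast$ the error is $b_jh_j^{\tau_m}\le b_jh_\ast^{\tau_m}$ times a geometric factor. Summing gives $n^{-A_m}$ with no logarithm.

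\emph{Case $p=p_\ast$.} Now $a_0(d-1)=\tau_m$, so every scale $j\le j_\ast$ contributes at the same order $b_jh_\ast^{\tau_m}(nh_\ast^{d-1}/N_j)^{A_m}$. I will distribute $N_j\propto n\,w_j$ over the $O(L_n)$ coarse scales, with weights $w_j\propto b_j^{\gamma}$ chosen by Hölder. The goal is to minimize $\bigl(\sum_j b_j^2w_j^{-2A_m}\bigr)^{1/2}$ subject to $\sum_j w_j\le1$ and $\|b\|_{\ell^q}\le1$. The expected value is $L_n^{(A_m+1/2-1/q)_+}$: uniform weights give $L_n^{A_m+1/2-1/q}$ via $\|b\|_{\ell^2(\le L_n)}\le L_n^{(1/2-1/q)_+}$, and concentrated weights give $O(1)$ when $q$ is small. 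I expect this bookkeeping, where the $\ell^2$ orthogonality across scales and the $\ell^q$ summability of $(b_j)$ must be used simultaneously, to be the main obstacle. The fine scales $j>j_\ast$ are again dropped at cost $h_\ast^{\tau_m}$.

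\emph{Case $p_\ast<p<1$.} At each scale I apply Lemma~\ref{lem:splitting} with $p_0=p_\ast$ and one common $t\in(0,1]$. The tail coordinates $\lambda^{(0)}$ form, by \eqref{eq:critical-besov-synthesis} with $u=1$, a function in $B^{k+d}_{1,q}$ of norm $\lesssim t^\theta$. I approximate that function with $n/2$ atoms by Corollary~\ref{cor:p1-variation-baseline}, at error $t^\theta n^{-\gamma_m}$. The head coordinates $\lambda^{(1)}$ form, again by \eqref{eq:critical-besov-synthesis} with $u=p_\ast$, a function of $B^{k+d/p_\ast}_{p_\ast,q}$-norm $\lesssim t^{\theta-1}$. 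For it I use the endpoint case with $n/2$ atoms, at error $t^{\theta-1}n^{-A_m}L_n^{\kappa_\ast}$.

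I balance the two errors by choosing $t=n^{-(A_m-\gamma_m)}L_n^{\kappa_\ast}$, which is at most $1$ for large $n$. This yields
\[
 n^{-\gamma_m-\theta(A_m-\gamma_m)}L_n^{\theta\kappa_\ast}.
\]
I then check that $A_m-\gamma_m=(1/p_\ast-1)/d$, so the exponent equals $G_{p,m}$. Because the coordinate sets are disjoint, the total $\ell^1$ mass is preserved exactly, so the budget stays $\lesssim\|f\|$. Finally, homogeneity reduces everything to $\|f\|\le1$, and the finitely many small $n$ are absorbed into the constants.
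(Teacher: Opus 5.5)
Your overall architecture matches the paper's. It uses the critical wavelet expansion and runs Lemma~\ref{lem:one-scale} with independent randomness across scales, with the cross-scale Bessel bound for omitted coordinates. The budget is tracked through the actual mass $\sum_j h_j^{-k}\|\lambda_j\|_{\ell^1}$. For $p>p_\ast$ it uses the disjoint splitting with one common $t$, Corollary~\ref{cor:p1-variation-baseline} for the tail, the endpoint case for the head, and $t=n^{-(A_m-\gamma_m)}L_n^{\kappa_\ast}$. Merging the polynomials $P_h$ is harmless but unnecessary, since each oracle already realizes its $P_h$ with $O(1)$ atoms of mass $O(h^{-k})$.

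\textbf{Gap in the case $p<p_\ast$.} As written, this case fails. With $p_0=p$ and $a_0=1/p-1/2$, Lemma~\ref{lem:one-scale} (through Lemma~\ref{lem:resource}) needs an angular order $s$ with $\rho_s>a_0$. But $\rho_s<r_m=k+1-m$ for every $s$. So no admissible oracle exists when $1/p-1/2\ge r_m$, i.e.\ $p\le(\beta_m+1)^{-1}$. This is a nonempty part of regime (i), because $(\beta_m+1)^{-1}<p_\ast$ is equivalent to $(d-2)\beta_m>0$. For example, with $d=3$ and $k=m=1$, every $p\le2/3$ fails, while $p_\ast=4/5$. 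The local oracle rate caps how much coefficient sparsity can be exploited.

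The repair, which is what the paper does, is to take $A_m<\alpha<\min\{1/p-1/2,\,r_m\}$, define $p_0$ by $1/p_0-1/2=\alpha$, and use $\|\lambda_j\|_{\ell^{p_0}}\le\|\lambda_j\|_{\ell^p}$. Your geometric allocation only needs $\alpha(d-1)>\tau_m$, i.e.\ $\alpha>A_m$. This interval is nonempty exactly because $r_m-A_m=\beta_m(d-2)/(d-1)>0$, which is where $d\ge3$ enters. The same inequality is what makes $\rho_s>A_m$ possible in the endpoint case, a check you also omit.

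\textbf{The endpoint case is left unfinished.} You flag the allocation as the main obstacle and do not carry it out, and your heuristic for it is inaccurate. Uniform weights $w_j=1/L$ give $n^{-A_m}L^{A_m}\|b\|_{\ell^2}$. For $q\le1$ the only available comparison is $\|b\|_{\ell^2}\le\|b\|_{\ell^q}$, with equality when $b$ sits on one scale. So the loss is $L_n^{A_m}$, strictly worse than $L_n^{\kappa_\ast}$. Your displayed scale contribution should also read $b_jh_\ast^{\tau_m}(n/N_j)^{A_m}$, since $nh_\ast^{d-1}=1$.

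The step is short once done:
\begin{itemize}
\item Since $\tau_m=(d-1)A_m$, the scale-$j$ error is at most $b_j(h_j^{-(d-1)}+N_j)^{-A_m}\le b_j(1+N_j)^{-A_m}$.
\item Set $\delta=(A_m+1/2)^{-1}$, $S=\sum_{j\le J}b_j^\delta$ and $N_j=\lfloor nb_j^\delta/S\rfloor$. The identity $2-2A_m\delta=\delta$ gives $\sum_{j\le J}b_j^2(1+N_j)^{-2A_m}\lesssim n^{-2A_m}S^{2A_m+1}=n^{-2A_m}\|b\|_{\ell^\delta}^2$.
\item H\"older on $J+1\simeq L_n$ entries gives $\|b\|_{\ell^\delta}\le L_n^{(A_m+1/2-1/q)_+}\|b\|_{\ell^q}$.
\end{itemize}
These are the Lagrange-optimal weights $w_j\propto b_j^\gamma$ you anticipated, with $\gamma=\delta$.
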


\begin{proof}
Set $B:=\|f\|_{B_{p,q}^{k+d/p}(\Omega)}$. If $B=0$, take the
empty network. Otherwise use Lemma~\ref{lem:critical-wavelet-model},
choosing its finite wavelet regularity after the angular order needed
in the relevant case. We first allow width at most a fixed multiple
of $n$, then rescale the integer width at the end. The three regimes
are proved separately.

\textbf{Case 1: $p=p_\ast$.}
Then
\[
\frac1p-\frac12=A_m,
\qquad
\tau_m=(d-1)A_m.
\]
Because $d\ge3$, $d-1\ge2$, and
\[
r_m-A_m
=\beta_m\left(1-\frac1{d-1}\right)>0.
\]
Choose $s$ so that $\rho_s>A_m$, and apply Lemma~\ref{lem:one-scale} with $p_0=p_\ast$ and $a_0=A_m$.

Let
\[
J=\left\lfloor\frac{\log_2 n}{d-1}\right\rfloor,
\qquad
L=J+1,
\qquad
\delta=(A_m+1/2)^{-1},
\qquad
S=\sum_{j=0}^{J}b_j^\delta.
\]
If $S=0$, all coarse scales vanish.  Otherwise allocate
\[
N_j=\left\lfloor\frac{n b_j^\delta}{S}\right\rfloor,
\qquad 0\le j\le J,
\]
where, if the one-scale construction uses at most $C_0N_j$ atoms, $n$ in
this definition is first replaced by $\lfloor n/C_0\rfloor$.  Then
$\sum_jC_0N_j\le n$.  The random precursor in the proof of
Lemma~\ref{lem:one-scale} may be taken independently across scales.  Since
$\tau_m=(d-1)A_m$, its mean-square error on scale $j$ is bounded by
\[
Cb_j^2(1+N_j)^{-2A_m}.
\]
Independence removes cross terms among the centered random errors, and the all-scale Bessel estimate controls the deterministic omitted coordinates.  Thus there is a deterministic realization satisfying
\[
\left\|\sum_{j=0}^{J}(f_j-G_{j,N_j})\right\|_{H^m}^2
\lesssim \sum_{j=0}^{J}b_j^2(1+N_j)^{-2A_m}.
\]
Since $1+\lfloor x\rfloor\ge c(1+x)$,
\[
b_j^2(1+N_j)^{-2A_m}
\lesssim n^{-2A_m}S^{2A_m}b_j^{2-2A_m\delta}.
\]
The choice $\delta=(A_m+1/2)^{-1}$ gives
$2-2A_m\delta=\delta$, hence
\[
\left(\sum_{j=0}^{J}b_j^2(1+N_j)^{-2A_m}\right)^{1/2}
\lesssim n^{-A_m}S^{A_m+1/2}.
\]
But
\[
S^{A_m+1/2}=\|(b_j)_{j\le J}\|_{\ell^\delta},
\]
and on a vector of length $L$,
\[
\|b\|_{\ell^\delta}
\le L^{(1/\delta-1/q)_+}\|b\|_{\ell^q}
=L^{\kappa_\ast}\|b\|_{\ell^q}.
\]
Therefore the coarse-scale error is at most
\[
CBn^{-A_m}L_n^{\kappa_\ast}.
\]
For the fine tail $j>J$, Lemma~\ref{lem:critical-wavelet-model} and $\ell^p\hookrightarrow\ell^2$ give
\[
\left\|\sum_{j>J}f_j\right\|_{H^m}^2
\lesssim \sum_{j>J}2^{-2j(d-1)A_m}b_j^2
\lesssim n^{-2A_m}B^2.
\]
The output budget is bounded by
\[
 C\sum_{j,\nu}h_j^{-k}|\lambda_{j,\nu}|\lesssim B.
\]
This proves the endpoint.

\textbf{Case 2: $0<p<p_\ast$.}
Here $1/p-1/2>A_m$.  Since $r_m>A_m$, choose
\[
A_m<\alpha<\min\left\{\frac1p-\frac12,r_m\right\}
\]
and define $p_0$ by $1/p_0-1/2=\alpha$.  Then $p<p_0<2$, so
\[
h_j^{-k}\|\lambda_j\|_{\ell^{p_0}}\le b_j,
\]
and choose $s$ with $\rho_s>\alpha$.  Keep
\[
J=\left\lfloor\frac{\log_2 n}{d-1}\right\rfloor.
\]
For $0\le j\le J$ set
\[
N_j=\left\lceil c_0n^{A_m/\alpha}
2^{j(d-1)(\alpha-A_m)/\alpha}\right\rceil,
\]
with $c_0$ small.  This is a geometric progression and $\sum_{j\le J}N_j\lesssim n$ because $2^{J(d-1)}\simeq n$.  Lemma~\ref{lem:one-scale} gives
\[
\|f_j-G_{j,N_j}\|_{H^m}
\lesssim b_jh_j^{\tau_m}(N_jh_j^{d-1})^{-\alpha}
=b_jN_j^{-\alpha}2^{j(d-1)(\alpha-A_m)}
\lesssim b_jn^{-A_m}.
\]
Summing over $j$ and using $\sum_jb_j\lesssim B$ gives coarse-scale error $CBn^{-A_m}$.  The fine tail is the same as in Case 1.  The budget is again $CB$.  Since $G_{p,m}\ge A_m$ precisely when $p\le p_\ast$, this is the desired saturated rate with no logarithmic loss.

\textbf{Case 3: $p_\ast<p<1$.}
Set
\[
\theta:=\frac{1/p-1}{1/p_\ast-1}\in(0,1).
\]
For a parameter $0<t\le1$, apply Lemma~\ref{lem:splitting} to each scale coefficient vector with $p_0=p_\ast$:
\[
 \lambda_j=\lambda_j^{(0)}+\lambda_j^{(1)},
\]
with disjoint supports and
\[
 h_j^{-k}\left(\|\lambda_j^{(0)}\|_{\ell^1}+t\|\lambda_j^{(1)}\|_{\ell^{p_\ast}}\right)
\lesssim t^\theta b_j.
\]
Let $f^{(0)}$ and $f^{(1)}$ be the corresponding wavelet sums.  Taking the outer $\ell^q$ norm gives
\[
\|f^{(0)}\|_{B_{1,q}^{k+d}}
\lesssim t^\theta B,
\qquad
\|f^{(1)}\|_{B_{p_\ast,q}^{k+d/p_\ast}}
\lesssim t^{\theta-1}B.
\]
Approximate $f^{(0)}$ with half the width using Corollary~\ref{cor:p1-variation-baseline}, and $f^{(1)}$ with the other half using the endpoint estimate just proved.  Then
\[
\sigma_{n,m}(f;CB)
\lesssim Bn^{-\gamma_m}t^\theta
+CBn^{-A_m}L_n^{\kappa_\ast}t^{\theta-1}.
\]
Choose
\[
t=n^{-(A_m-\gamma_m)}L_n^{\kappa_\ast},
\]
with $t=1$ for the finitely many $n$ where this exceeds $1$.  Direct calculation gives
\[
A_m-\gamma_m=\frac{\beta_m}{d(d-1)},
\]
and, since $1/p_\ast-1=\beta_m/(d-1)$,
\[
\theta(A_m-\gamma_m)=\frac{1/p-1}{d}.
\]
Thus either term is bounded by
\[
CBn^{-\gamma_m-\theta(A_m-\gamma_m)}L_n^{\theta\kappa_\ast}
=Bn^{-G_{p,m}}L_n^{\theta\kappa_\ast}.
\]
This is precisely the displayed logarithmic exponent.  For the budget, the $f^{(0)}$ approximation uses $O(t^\theta B)\le O(B)$ by Corollary~\ref{cor:p1-variation-baseline}.  For $f^{(1)}$, the endpoint construction uses the actual $\ell^1$ mass of active coordinates, and the disjoint splitting gives
\[
\sum_{j,\nu}h_j^{-k}|\lambda_{j,\nu}^{(1)}|
\le\sum_{j,\nu}h_j^{-k}|\lambda_{j,\nu}|
\lesssim B.
\]
Thus the combined output budget is $O(B)$ uniformly in $t$ and $n$.

In every case the construction has width at most $C_w n$, with
$C_w$ fixed, and coefficient mass at most $C_b B$. Apply the
construction with $\lfloor n/C_w\rfloor$ when $n\ge2C_w$.
Powers of this integer and their logarithmic factors are comparable
to the stated powers of $n$. For $n<2C_w$ take the zero network and
use $\|f\|_{H^m}\lesssim B$, which follows from the critical wavelet
estimate. Enlarging only the error constant covers these finitely
many widths and proves the theorem.
\end{proof}

\section{Lemmas for the lower bounds}\label{sec:lower-lemmas}

The lower bound has two independent components.  The first uses only the
$(d-1)$-dimensional direction sphere and is valid without a coefficient
budget.  The second resolves both directions and biases; it is weaker in the
angularly saturated range but becomes sharp when $p>p_\ast$.

\paragraph{Angular obstruction for the dictionary class.}

We use the radial comparison theorem of Konovalov, Leviatan, and Maiorov
\cite[Theorem~4]{KLM2008}. Write $B^d=B(0,1)$,
$\mathcal P_t$ for polynomials of degree at most $t$, and
$E_2(F,\mathcal A;B)=\inf_{G\in\mathcal A}\|F-G\|_{L^2(B)}$.
Here $\mathcal R_{N,2}$ consists of sums of $N$ univariate ridges whose
profiles belong to $L^2(-1,1)$ on the unit ball.
It supplies constants $\bar c>0$ and
$\bar c_0\in\N$, depending only on $d$, such that for every radial
$F\in L^2(B^d)$ and every $s\in\N$,
\[
\bar c\,E_2(F,\mathcal P_{\bar c_0s};B^d)
\le E_2(F,\mathcal R_{s^{d-1},2};B^d),
\]
We also use its one-dimensional Remez inequality
\cite[Lemma~10]{KLM2008}. These two external results are used without
reproof. The following localization and scaling consequences are needed
in the present Sobolev setting, so we include their derivation.

\begin{lemma}[Scaled scalar radial-ridge obstruction]\label{lem:scaled-klm}
Let $B(x_0,4R)\Subset\Omega$ and let $0\ne\varphi\in\mathcal S(\R^d)$ be radial.  For $a\in\R$ define
\[
\varphi_h^{(a)}(x):=h^a\varphi\left(\frac{x-x_0}{h}\right).
\]
There exist $c_0,c_1,h_0>0$ such that, if $0<h<h_0$ and $N\le c_1h^{-(d-1)}$, then
\[
\inf_{G\in\mathcal R_N}
\|\varphi_h^{(a)}-G\|_{L^2(B(x_0,R))}
\ge c_0h^{a+d/2},
\]
where $\mathcal R_N$ is the class of sums of at most $N$ univariate ridge profiles $q_j(\omega_j\cdot x+b_j)$ with
$q_j\in L^2_{\rm loc}(\R)$.  The constants are independent of $a$.
\end{lemma}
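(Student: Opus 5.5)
The plan is to reduce the statement to the radial comparison theorem \cite[Theorem~4]{KLM2008} by rescaling to the unit ball, and then to control the polynomial best approximation error of the rescaled radial Schwartz function from below. The factor $h^a$ is pulled out of both sides, which is why the constants do not depend on $a$. First, the substitution $x=x_0+Ry$ maps $B(x_0,R)$ onto $B^d$. It maps ridges to ridges, since $q(\omega\cdot x+b)=\tilde q(\omega\cdot y+\tilde b)$ with $\tilde q\in L^2_{\rm loc}$, and it multiplies $L^2$ norms by $R^{d/2}$. Under this substitution $\varphi_h^{(a)}$ becomes $h^aF_\varepsilon(y)$ with $F_\varepsilon(y)=\varphi(y/\varepsilon)$ and $\varepsilon=h/R$. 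The function $F_\varepsilon$ is radial and belongs to $L^2(B^d)$. Restricting a locally $L^2$ profile to the interval $[-1-|\tilde b|,1+|\tilde b|]$ shows that every element of $\mathcal R_N$ restricted to $B^d$ lies in $\mathcal R_{N,2}$, after shifting the profile. Hence
\[
 \inf_{G\in\mathcal R_N}\|\varphi_h^{(a)}-G\|_{L^2(B(x_0,R))}
 \ge R^{d/2}h^a E_2(F_\varepsilon,\mathcal R_{N,2};B^d).
\]

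Second, I choose $s\in\N$ with $s^{d-1}\ge N$ and $s\simeq c_1^{1/(d-1)}h^{-1}$. Monotonicity of $\mathcal R_{N,2}$ in $N$ and the cited theorem then give
\[
 E_2(F_\varepsilon,\mathcal R_{N,2};B^d)\ge\bar c\,E_2(F_\varepsilon,\mathcal P_{t};B^d),
 \qquad t=\bar c_0 s\le \eta/\varepsilon,
\]
where $\eta$ can be made as small as we like by choosing $c_1$ small. It therefore suffices to prove
\[
 E_2(F_\varepsilon,\mathcal P_t;B^d)\gtrsim \varepsilon^{d/2}
 \qquad\text{whenever } t\varepsilon\le\eta \text{ and } \varepsilon<\varepsilon_0 .
\]
Combining this with the prefactor $R^{d/2}h^a$ yields $c_0h^{a+d/2}$.

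Third, and this is the main obstacle, I need a lower bound for polynomial approximation of a bump concentrated at scale $\varepsilon$ by polynomials of degree $t\ll\varepsilon^{-1}$. Because both $F_\varepsilon$ and the best radial polynomial approximant can be taken radial (average over rotations), the problem reduces to one dimension in the radial variable. I would use the Remez inequality \cite[Lemma~10]{KLM2008}. Since $\varphi\ne0$ is a Schwartz function, there is an annulus $\{\varepsilon r_1<|y|<\varepsilon r_2\}$ on which $|F_\varepsilon|\ge\kappa>0$, while $|F_\varepsilon|$ is tiny outside $\{|y|\le \varepsilon r_3\}$ for a large $r_3$. Suppose a polynomial $P$ of degree $t$ satisfied $\|F_\varepsilon-P\|_{L^2(B^d)}\le\mu\varepsilon^{d/2}$ with $\mu$ small. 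Then $P$ would be small in $L^2$ on most of $\{\varepsilon r_3<|y|<1\}$, a set of measure comparable to one. The Remez and Nikolskii-type inequalities for degree $t$, applied on the set $|y|\le\varepsilon r_3$ of measure $\varepsilon^d$ relative size, whose one-dimensional length is about $\varepsilon$ while $t\varepsilon\le\eta$, bound $P$ on the small ball by at most $e^{C t\varepsilon}$ times its size outside. So $P$ would be small in $L^2$ on the annulus as well, which contradicts $|F_\varepsilon|\ge\kappa$ there, a set of measure $\simeq\varepsilon^d$. The delicate point is that the $L^2$ smallness outside is on the scale $\mu\varepsilon^{d/2}$ and must be transferred to sup-norm smallness of order $\mu$ near the origin. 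I would do this with an $L^2\to L^\infty$ Nikolskii bound with factor $t^{d/2}\lesssim\varepsilon^{-d/2}\eta^{d/2}$, so the smallness of $\eta$ supplies the needed margin. Finally, $h_0$ is chosen so that $\varepsilon<\varepsilon_0$ and $s\ge1$. All constants depend only on $d$, $\varphi$ and $R$, and not on $a$.
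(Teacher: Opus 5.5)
Your reduction matches the paper's: rescale $B(x_0,R)$ to $B^d$ with $\varepsilon=h/R$, absorb the biases into the profiles, factor out $h^a$, and apply monotonicity in $N$ together with \cite[Theorem~4]{KLM2008} for $s^{d-1}\ge N$ and $\bar c_0 s\varepsilon$ small. The difference is in the polynomial lower bound.

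The paper stays in $L^2$ throughout. Slicing the ball parallel to a coordinate axis and applying \cite[Lemma~10]{KLM2008} with $q=2$ on each slice gives $\|P\|_{L^2(B^d)}\lesssim\|P\|_{L^2(B^d\setminus B(0,(8t)^{-1}))}$ for $P\in\mathcal P_t$. The triangle inequality $\|F_\varepsilon\|_2\le(1+C)\|F_\varepsilon-P\|_2+C\|F_\varepsilon\|_{L^2(B^d\setminus B(0,(8t)^{-1}))}$ then closes at once, because the last term is a small fraction of $\|F_\varepsilon\|_2$. No annulus, sup norm or contradiction argument is needed.

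Your route uses a pointwise lower bound $|F_\varepsilon|\ge\kappa$ on an annulus. You therefore have to turn $L^2$ smallness of $P$ outside $B(0,\rho)$, with $\rho=\varepsilon r_3$, into sup-norm smallness inside, and as written this is the weak spot. The factor $t^{d/2}$ is only an interior Nikolskii (Christoffel) constant; near $\partial B^d$ it is $t^{(d+1)/2}$, which costs an extra factor of order $\varepsilon^{-1/2}$. It is also taken relative to the $L^2$ norm over a full neighbourhood, and that neighbourhood contains the very ball you are trying to control.

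The step does close once you make an absorption explicit. Let $\tau\varepsilon^{d/2}$ bound the $L^2$ tail of $F_\varepsilon$ outside $B(0,\rho)$, so that $\|P\|_{L^2(B^d\setminus B(0,\rho))}\le(\mu+\tau)\varepsilon^{d/2}$. The interior bound on $B(0,1/2)$ gives
\[
 \sup_{B(0,\rho)}|P|^2\le Ct^d\Bigl(\|P\|_{L^2(B(0,1/2)\setminus B(0,\rho))}^2+|B(0,\rho)|\sup_{B(0,\rho)}|P|^2\Bigr).
\]
Choose $\eta$ small relative to $r_3$, so that $Ct^d|B(0,\rho)|\lesssim(t\varepsilon r_3)^d\le\tfrac12$. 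Then $\sup_{B(0,\rho)}|P|\lesssim\eta^{d/2}(\mu+\tau)$, and the contradiction on the annulus follows.

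With this absorption, your Remez step and the radial averaging become unnecessary. Your version then replaces the paper's slicing argument and the cited one-dimensional Remez lemma with the standard interior Nikolskii bound. The paper's version avoids any $L^\infty$ estimate and needs only $\|F_\varepsilon\|_2$, not a pointwise lower bound on the bump.
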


\begin{proof}
Choose $C_\varphi>0$ so large that
\[
 \|\varphi\|_{L^2(\R^d\setminus B(0,C_\varphi))}
 \le \varepsilon\|\varphi\|_{L^2(\R^d)},
\]
where $\varepsilon>0$ will be fixed below.  On the unit ball set
\[
F_\delta(y):=A\varphi(y/\delta),
\]
where $A$ is arbitrary.  Let $t\in\N$. We first record the ball version of the cited Remez
inequality:
\[
 \|P\|_{L^2(B^d)}
 \lesssim \|P\|_{L^2(B^d\setminus B(0,(8t)^{-1}))},
 \qquad P\in\mathcal P_t.
\]
To see this, slice the ball parallel to the first coordinate. For
$y\in\R^{d-1}$ with $|y|<(8t)^{-1}$, the slice has half-length
$a_y=(1-|y|^2)^{1/2}>1/2$. The deleted central interval has
half-length at most $(8t)^{-1}\le a_y/(4t)$. Scale the slice to
$(-1,1)$ and apply \cite[Lemma~10]{KLM2008} with $q=2$.
For the other slices nothing is deleted. Integrating the squared
one-dimensional inequality in $y$ proves the displayed estimate with
a constant independent of $t$.

Choose the tail tolerance above sufficiently small and then $\delta$
sufficiently small that
$\|F_\delta\|_{L^2(B^d)}\ge\frac12|A|\delta^{d/2}\|\varphi\|_2$.
If $t\delta C_\varphi\le1/8$, the norm of $F_\delta$ outside
$B(0,(8t)^{-1})$ is at most $2\varepsilon\|F_\delta\|_{L^2(B^d)}$.
Renaming $2\varepsilon$ as $\varepsilon$, the Remez inequality gives
\[
 \|P\|_{L^2(B^d)}
 \lesssim
 \|F_\delta-P\|_{L^2(B^d)}
 +\varepsilon\|F_\delta\|_{L^2(B^d)}.
\]
Hence
\[
\|F_\delta\|_2
\le\|F_\delta-P\|_2+\|P\|_2
\le(1+C_R)\|F_\delta-P\|_2
+C_R\varepsilon\|F_\delta\|_2.
\]
Fix $\varepsilon<(2C_R)^{-1}$ and absorb the final term.  Thus
\[
E_2(F_\delta,\mathcal P_t;B^d)
\ge c\|F_\delta\|_2.
\]

Given $N\ge1$, choose $s=\lceil N^{1/(d-1)}\rceil$.  Since $N\le s^{d-1}$, monotonicity of best approximation with respect to the model class and the KLM comparison theorem imply
\[
E_2(F_\delta,\mathcal R_{N,2};B^d)
\ge E_2(F_\delta,\mathcal R_{s^{d-1},2};B^d)
\ge cE_2(F_\delta,\mathcal P_{\bar c_0s};B^d)
\ge c\|F_\delta\|_2,
\]
provided $\bar c_0s\delta C_\varphi\le1/8$.

Now use $x=x_0+Ry$ and $\delta=h/R$.  Translation and dilation convert arbitrary biased ridges in $x$ into arbitrary univariate ridge profiles in $y$ without changing their number.  If $N\le c_1h^{-(d-1)}$ with $c_1$ sufficiently small, the support condition above holds.  Finally
\[
\|\varphi_h^{(a)}\|_{L^2(B(x_0,R))}
\simeq h^{a+d/2}\|\varphi\|_2
\]
for all sufficiently small $h$.  This proves the assertion.
\end{proof}

To apply the scalar obstruction to odd-order Sobolev derivatives,
we need a difference operator controlled by one derivative. Spherical
averaging has this property and also preserves ridge directions.

\begin{lemma}[Spherical-mean difference bound]\label{lem:mean-diff}
Let $M_tF(x)=\int_{\Sph^{d-1}}F(x+t\theta)\,d\sigma(\theta)$, where $d\sigma$ is probability surface measure, and set
\[
T_tF:=\frac{I-M_t}{t}F.
\]
If $F\in H^1(B(x_0,2R))$ and $0<t<R/2$, then
\[
\|T_tF\|_{L^2(B(x_0,R))}
\lesssim \|\nabla F\|_{L^2(B(x_0,2R))},
\]
with $C$ independent of $t$.
\end{lemma}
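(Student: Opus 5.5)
We will prove the bound by writing $F(x)-M_tF(x)$ as an average of increments of $F$ along rays, applying the fundamental theorem of calculus on each segment, and then using Minkowski's integral inequality to exchange the $L^2$ norm with the averages. By density of $C^\infty(\overline{B(x_0,2R)})$ in $H^1(B(x_0,2R))$, and since $M_t$ is bounded on $L^2(B(x_0,R))\to$ (values from $L^2(B(x_0,3R/2))$), it suffices to treat smooth $F$. For $x\in B(x_0,R)$ and $0<t<R/2$, every point $x+\lambda t\theta$ with $\lambda\in[0,1]$ lies in $B(x_0,3R/2)\subset B(x_0,2R)$, so all quantities below are defined.

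For smooth $F$ and $x\in B(x_0,R)$ we have
\[
 F(x)-M_tF(x)=-\int_{\Sph^{d-1}}\int_0^1 t\,\theta\cdot\nabla F(x+\lambda t\theta)\,d\lambda\,d\sigma(\theta).
\]
Dividing by $t$ gives the key cancellation: the factor $t$ from the chain rule removes the $1/t$ in $T_t$, so
\[
 |T_tF(x)|\le\int_{\Sph^{d-1}}\int_0^1|\nabla F(x+\lambda t\theta)|\,d\lambda\,d\sigma(\theta).
\]

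Taking the $L^2(B(x_0,R))$ norm in $x$ and applying Minkowski's integral inequality (the measure $d\lambda\,d\sigma$ is a probability measure) yields
\[
 \|T_tF\|_{L^2(B(x_0,R))}\le\sup_{\lambda,\theta}\|\nabla F(\cdot+\lambda t\theta)\|_{L^2(B(x_0,R))}.
\]
Each translate $B(x_0,R)+\lambda t\theta$ is contained in $B(x_0,2R)$, so the right-hand side is at most $\|\nabla F\|_{L^2(B(x_0,2R))}$, with constant $1$ and in particular independent of $t$.

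To pass to general $F\in H^1(B(x_0,2R))$, approximate $F$ in $H^1$ by smooth functions on the Lipschitz ball. For each fixed $t$, $T_t$ is bounded from $L^2(B(x_0,2R))$ to $L^2(B(x_0,R))$ (with a $t$-dependent norm, which is harmless for fixed $t$). Hence both sides converge under this approximation, and the inequality persists with the same constant.

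The only point requiring care is the geometric containment $|x+\lambda t\theta-x_0|<R+R/2<2R$, which guarantees that every increment lies in the domain of $F$; the remaining steps are routine.
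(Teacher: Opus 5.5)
Your proof is correct and follows the same route as the paper: you apply the fundamental theorem of calculus along the segments $x+\lambda t\theta$, note that the chain-rule factor $t$ cancels the $1/t$, then exchange the probability average with the $L^2(B(x_0,R))$ norm (you use Minkowski's integral inequality where the paper uses Jensen and Fubini), and conclude by translation containment and density. Both arguments give the constant $1$.
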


\begin{proof}
The fundamental theorem of calculus gives
\[
\frac{F(x)-F(x+t\theta)}{t}
=-\int_0^1\theta\cdot\nabla F(x+st\theta)\,ds.
\]
Average in $\theta$.  Jensen's inequality first in $(s,\theta)$ and then Fubini give
\[
|T_tF(x)|^2
\le\int_0^1\int_{\Sph^{d-1}}|\nabla F(x+st\theta)|^2\,d\sigma(\theta)\,ds.
\]
Integrating over $B(x_0,R)$ and translating the integration variable yields a bound by the $L^2$ norm of $\nabla F$ over $B(x_0,2R)$, because $st<R/2$. The argument is first applied to smooth $F$ and
then passed to $H^1$ by density on the ball.
\end{proof}

The preceding estimate is useful only if the difference operator
does not enlarge the ridge count. Rotational invariance provides
exactly that preservation property.

\begin{lemma}[Spherical means preserve ridge directions]\label{lem:ridge-preserve}
If $Q\in L^2_{\rm loc}(\R)$, then for every unit $\omega$ and $b\in\R$ there is a univariate profile $\widetilde Q_t$ such that
\[
T_t[Q(\omega\cdot x+b)]=\widetilde Q_t(\omega\cdot x+b).
\]
Consequently $T_t$ maps a sum of $N$ ridges into a sum of at most $N$ ridges with the same directions.
\end{lemma}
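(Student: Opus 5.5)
The plan is to compute the spherical mean of a single ridge directly and observe that it is again a ridge in the same direction. Fix a unit $\omega$ and $b\in\R$, and put $R(x):=Q(\omega\cdot x+b)$. Then
\[
 M_tR(x)=\int_{\Sph^{d-1}}Q\bigl(\omega\cdot x+b+t\,\omega\cdot\theta\bigr)\,d\sigma(\theta).
\]
The only dependence on $\theta$ is through the scalar $\omega\cdot\theta\in[-1,1]$. Rotation invariance of $\sigma$ implies that the pushforward of $\sigma$ under $\theta\mapsto\omega\cdot\theta$ is a fixed probability measure $\mu_d$ on $[-1,1]$, independent of $\omega$. For $d\ge2$ this measure is the density $c_d(1-u^2)^{(d-3)/2}\,du$. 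Hence
\[
 M_tR(x)=\int_{-1}^1 Q(\omega\cdot x+b+tu)\,d\mu_d(u)=:(Q*_t\mu_d)(\omega\cdot x+b),
\]
and the candidate profile is
\[
 \widetilde Q_t(s):=\frac{Q(s)-\int_{-1}^1Q(s+tu)\,d\mu_d(u)}{t}.
\]

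The step needing care is that $\widetilde Q_t$ is well defined and lies in $L^2_{\rm loc}(\R)$ when $Q$ is only locally square integrable. For $d\ge3$ the density of $\mu_d$ is bounded. By Minkowski's integral inequality, on any bounded interval $I$ one has
\[
 \Bigl\|\int Q(\cdot+tu)\,d\mu_d(u)\Bigr\|_{L^2(I)}\le\sup_{|u|\le1}\|Q\|_{L^2(I+tu)}\le\|Q\|_{L^2(I+[-t,t])},
\]
so $\widetilde Q_t\in L^2_{\rm loc}(\R)$. The same bound covers $d=2$, since $\mu_2$ is still a probability measure; there the density is merely integrable, and Minkowski's inequality only uses that $\mu_2$ has total mass one.

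To make the identity rigorous almost everywhere, I would use Fubini on the product of $x$ in a bounded set with $\theta\in\Sph^{d-1}$. Rotate coordinates so that $\omega=e_1$. The function $(x,\theta)\mapsto Q(x_1+b+t\theta_1)$ is then locally integrable, which justifies the change of variables $u=\theta_1$ for almost every $x$. Thus $T_tR=\widetilde Q_t(\omega\cdot x+b)$ as $L^2_{\rm loc}$ functions.

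The consequence follows from linearity of $T_t$. A sum $\sum_{j\le N}Q_j(\omega_j\cdot x+b_j)$ is mapped to $\sum_{j\le N}\widetilde Q_{j,t}(\omega_j\cdot x+b_j)$, which has the same directions and at most $N$ terms, since some profiles may vanish.

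I expect no real obstacle here. The only delicate point is the measure-theoretic justification, namely the pushforward of $\sigma$ and Fubini for a merely locally square-integrable profile; the Minkowski bound above handles it.
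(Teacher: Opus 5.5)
Your proposal is correct and follows the paper's argument. Rotation invariance makes the pushforward of $\sigma$ under $\theta\mapsto\omega\cdot\theta$ a fixed one-dimensional probability measure, so $M_t$ acts on a ridge as a one-dimensional convolution of its profile, and $T_t$ returns a ridge in the same direction. Your Minkowski and Fubini steps, which show $\widetilde Q_t\in L^2_{\rm loc}(\R)$ and give the identity almost everywhere, supply rigor the paper leaves implicit; this is exactly what the later application in Theorem~\ref{thm:vector-obstruction} needs.
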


\begin{proof}
Rotational invariance of $d\sigma$ implies that the distribution of $\omega\cdot\theta$ is independent of the chosen unit vector $\omega$.  Hence
\[
M_t[Q(\omega\cdot x+b)]
=\int_{\Sph^{d-1}}Q(\omega\cdot x+b+t\omega\cdot\theta)\,d\sigma(\theta)
=(Q*\mu_t)(\omega\cdot x+b)
\]
for a one-dimensional probability measure $\mu_t$.  Subtracting from $Q$ and dividing by $t$ gives another univariate profile.
\end{proof}

It remains to verify that applying this difference to a rescaled
radial potential produces a nonzero radial profile at the same scale.

\begin{lemma}[Scaling of a radial potential under $T_t$]\label{lem:potential-scale}
Let $V_h(x)=h^aV((x-x_0)/h)$ with fixed nonzero radial $V\in\mathcal S(\R^d)$.  There exists a sufficiently small fixed $\eta>0$ such that
\[
T_{\eta h}V_h(x)
=h^{a-1}\Psi_\eta\left(\frac{x-x_0}{h}\right),
\]
where $\Psi_\eta$ is a fixed nonzero radial Schwartz function.
\end{lemma}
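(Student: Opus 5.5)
The identity should follow from an exact change of variables, and the only substantive point is that $\Psi_\eta$ does not vanish. I plan to verify this on the Fourier side.

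\textbf{Scaling identity.} Write $y=(x-x_0)/h$. By definition of the spherical mean,
\[
 M_{\eta h}V_h(x)=h^a\int_{\Sph^{d-1}}V\Bigl(\frac{x-x_0}{h}+\eta\theta\Bigr)\,d\sigma(\theta)=h^a(M_\eta V)(y).
\]
Dividing $V_h-M_{\eta h}V_h$ by $t=\eta h$ then gives
\[
 T_{\eta h}V_h(x)=h^{a-1}\Psi_\eta(y),
 \qquad
 \Psi_\eta:=\frac{V-M_\eta V}{\eta}.
\]
This is exact for every $\eta>0$, and $\Psi_\eta$ depends only on $V$ and $\eta$, not on $h$ or $x_0$. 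The requirement that $\eta$ be small only serves later applications: it ensures $t=\eta h<R/2$ so that Lemma~\ref{lem:mean-diff} applies, which holds for all $h<h_0$ once $\eta$ is fixed.

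\textbf{Radiality and Schwartz class.} $M_\eta$ commutes with rotations because $d\sigma$ is rotation invariant, so $M_\eta V$ and hence $\Psi_\eta$ are radial. Derivatives commute with $M_\eta$, so $D^\alpha M_\eta V=M_\eta D^\alpha V$. Moreover $(1+|y|)^N\le C_\eta(1+|y+\eta\theta|)^N$ for $|\theta|=1$. Together these show that every Schwartz seminorm of $M_\eta V$ is bounded by the corresponding seminorm of $V$, so $\Psi_\eta\in\mathcal S(\R^d)$.

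\textbf{Nonvanishing (the main step).} Here I will compute Fourier transforms. Since $M_\eta V=V*\sigma_\eta$, with $\sigma_\eta$ the reflected surface measure on the sphere of radius $\eta$,
\[
 \widehat{\Psi_\eta}(\xi)=\eta^{-1}\widehat V(\xi)\bigl(1-\mathfrak m(\eta|\xi|)\bigr),
 \qquad
 \mathfrak m(|\xi|)=\int_{\Sph^{d-1}}e^{i\xi\cdot\theta}\,d\sigma(\theta),
\]
and $\mathfrak m$ is real by symmetry of $\sigma$.

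For $\xi\ne0$ we have $\mathfrak m(|\xi|)<1$. Indeed, $1-\mathfrak m=\int(1-\cos(\xi\cdot\theta))\,d\sigma$, and the integrand is nonnegative and positive on an open set of $\theta$, so the integral is strictly positive. Since $V\ne0$, $\widehat V$ is continuous and nonzero on some open set, which contains points $\xi\ne0$. On such points $\widehat{\Psi_\eta}\ne0$, and therefore $\Psi_\eta\ne0$.

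This completes the plan; I do not expect obstacles beyond the strict inequality $\mathfrak m<1$ just established.
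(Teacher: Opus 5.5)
Your proposal is correct, and it proves nonvanishing by a different route from the paper. The scaling identity and the radial/Schwartz properties are handled the same way in both. For $\Psi_\eta\ne0$, however, the paper argues asymptotically. It uses the spherical-mean expansion $M_\eta V=V+c_d\eta^2\Delta V+O(\eta^4)$ in the Schwartz topology, so that $\Psi_\eta=-c_d\eta\,\Delta V+O(\eta^3)$. It then uses the fact that a harmonic Schwartz function vanishes to get $\Delta V\not\equiv0$, and this argument only works for sufficiently small $\eta$. Your Fourier identity $\widehat{\Psi_\eta}(\xi)=\eta^{-1}\widehat V(\xi)\bigl(1-\mathfrak m(\eta|\xi|)\bigr)$, with $\mathfrak m(r)<1$ for $r\ne0$, is the exact, non-asymptotic form of the same mechanism: the symbol $1-\mathfrak m(\eta|\xi|)$ vanishes only at $\xi=0$, just as its leading term $c\,\eta^2|\xi|^2$ does. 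It buys two things. First, $\Psi_\eta\ne0$ holds for every $\eta>0$, so smallness of $\eta$ is needed only to keep $t=\eta h<R/2$ in Lemma~\ref{lem:mean-diff}, as you observe. Second, you never have to control the $O(\eta^4)$ remainder in the Schwartz topology. The paper's expansion gives in exchange a concrete picture of the profile, $\Psi_\eta\approx-c_d\eta\,\Delta V$, and its size $\simeq\eta$ as $\eta\to0$. Neither is needed downstream, since only $\Psi_\eta\ne0$ (radial, Schwartz) enters Lemma~\ref{lem:scaled-klm}.
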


\begin{proof}
Substitution gives
\[
M_{\eta h}V_h(x)
=h^a\int_{\Sph^{d-1}}V\left(\frac{x-x_0}{h}+\eta\theta\right)d\sigma(\theta),
\]
whence
\[
T_{\eta h}V_h(x)
=h^{a-1}\eta^{-1}\left[V(y)-\int_{\Sph^{d-1}}V(y+\eta\theta)d\sigma(\theta)\right]_{y=(x-x_0)/h}.
\]
The bracket defines a radial Schwartz function.  The spherical-mean Taylor expansion is
\[
M_\eta V=V+c_d\eta^2\Delta V+O(\eta^4)
\]
in the Schwartz topology.  A harmonic Schwartz function is zero, so
$\Delta V\not\equiv0$.  Thus the bracket is nonzero for all sufficiently
small fixed $\eta$.
\end{proof}

\subsection{Gevrey localization in direction and bias}\label{sec:gevrey}

The scalar radial obstruction does not see the bias parameter.  To obtain the
second branch, we need a transform whose response to one ridge atom is
localized simultaneously in direction and bias.  Compactly supported Gevrey
cutoffs provide the required subexponential Fourier decay.

Fix $0<\vartheta<1$ and put
\[
\sigma:=\frac1\vartheta>1.
\]
Choose once and for all $\chi\in G_c^\sigma(\Omega)$ such that $\chi\equiv1$ on a neighborhood of the compact set supporting all hard targets, and choose an even nonzero band-pass cutoff
\[
\eta\in G_c^\sigma(\R),
\qquad
\supp\eta\subset\{r:\tfrac12\le|r|\le2\}.
\]
The existence of compactly supported Gevrey cutoffs for every $\sigma>1$
and the derivative convention used here are classical; see
\cite[Chapter~1]{Rodino1993}.
Thus for suitable $C_0,C_1$,
\[
\|D^\alpha\chi\|_\infty\lesssim C_1^{|\alpha|}(\alpha!)^\sigma,
\qquad
\|\eta^{(\ell)}\|_\infty\lesssim C_1^\ell(\ell!)^\sigma.
\]

\begin{lemma}[One-dimensional Gevrey Fourier decay]\label{lem:gevrey-fourier}
Let $A,B>0$ and $E\ge0$. Suppose $g\in C_c^\infty(\R)$ is
supported in a fixed compact interval and satisfies
\[
\|g^{(\ell)}\|_\infty\le AE B^\ell(\ell!)^\sigma,
\qquad \ell\ge0.
\]
Then
\[
\left|\int_\R e^{iru}g(r)\,dr\right|
\lesssim Ee^{-c|u|^\vartheta}.
\]
\end{lemma}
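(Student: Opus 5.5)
The plan is the standard integration-by-parts argument for Gevrey classes, followed by optimization over the number of integrations. For every integer $\ell\ge0$ and $u\neq0$, I would integrate by parts $\ell$ times in
\[
 I(u)=\int_\R e^{iru}g(r)\,dr .
\]
The boundary terms vanish because $g$ is compactly supported and smooth. This gives
\[
 |I(u)|\le |u|^{-\ell}\int_\R|g^{(\ell)}(r)|\,dr
 \le |S|\,AE\,B^\ell(\ell!)^\sigma|u|^{-\ell},
\]
where $|S|$ is the length of the fixed support interval. Since $A$ and $|S|$ are fixed, they are absorbed into the implicit constant. For $\ell=0$ the same estimate gives $|I(u)|\lesssim E$ uniformly, which covers the case $|u|\le1$ (there $e^{-c|u|^\vartheta}\ge e^{-c}$).

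For $|u|\ge1$, I would bound $\ell!\le\ell^\ell$, so that
\[
 B^\ell(\ell!)^\sigma|u|^{-\ell}\le\left(\frac{B\ell^\sigma}{|u|}\right)^{\ell}.
\]
Next I would choose $\ell=\lfloor(|u|/(eB))^{1/\sigma}\rfloor$. With this choice $B\ell^\sigma/|u|\le e^{-\sigma}\le e^{-1}$, so the bound is at most $e^{-\ell}$. Since $\ell\ge(|u|/(eB))^{\vartheta}-1$ (recall $1/\sigma=\vartheta$), we get
\[
 e^{-\ell}\le e\,\exp\!\bigl(-(eB)^{-\vartheta}|u|^\vartheta\bigr).
\]
This yields $|I(u)|\lesssim Ee^{-c|u|^\vartheta}$ with $c=(eB)^{-\vartheta}$, and the constant depends only on $A$, $B$, $\sigma$ and the support length. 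When the choice gives $\ell=0$, $|u|$ is bounded and the trivial estimate already suffices.

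The only point needing care is the optimization step. The Gevrey growth $(\ell!)^\sigma$ is what forces the stretched exponent $\vartheta=1/\sigma<1$ rather than a genuine exponential. The choice $\ell\sim|u|^{1/\sigma}$ keeps the ratio $B\ell^\sigma/|u|$ below $e^{-1}$, and this produces geometric decay in $\ell$. The crude bound $\ell!\le\ell^\ell$ is enough here; Stirling's formula is not needed. Linearity in $E$ is preserved throughout, because $E$ only multiplies the derivative bounds.
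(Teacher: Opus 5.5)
Your proof is correct. The paper gives no argument of its own for this lemma. It cites the compactly supported Gevrey Fourier characterization in \cite[Chapter~1]{Rodino1993}, applies it to $g/E$, and handles $E=0$ as a separate case.

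Your integration by parts in $r$, followed by choosing $\ell\sim|u|^{1/\sigma}$, is the standard proof behind that citation. So the difference is self-containment rather than strategy. It still buys something concrete: an explicit bound valid for every $u$,
\[
\Bigl|\int_\R e^{iru}g(r)\,dr\Bigr|\le e\,|S|\,A\,E\,\exp\bigl(-(eB)^{-\vartheta}|u|^\vartheta\bigr).
\]
This bound is visibly linear in $E$, so $E=0$ needs no separate case. Its constants depend only on $A$, $B$, $\sigma$ and the support length. That uniformity is exactly what the paper uses when it applies the lemma in the packet theorem with a parameter-dependent $E=e^{-c(|q|/h)^\vartheta}$. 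With the citation alone, the reader must check separately that the cited constants depend only on the Gevrey constants and the support.

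One small slip: with $\ell=\lfloor(|u|/(eB))^{1/\sigma}\rfloor$ you only get $B\ell^\sigma/|u|\le e^{-1}$, not $e^{-\sigma}$. The stronger claim fails, for example, when $(|u|/(eB))^{1/\sigma}$ is an integer. You only use the bound $e^{-1}$, so the argument is unaffected.
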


This is the compactly supported Gevrey Fourier characterization
\cite[Chapter~1]{Rodino1993}, applied to $g/E$ when $E>0$.
The constants depend on $A,B,\sigma$ and the fixed support interval,
but not on $E$ or $u$. When $E=0$, the assertion is immediate.

The packet argument also creates powers of the frequency variable.
The next elementary estimate quantifies their absorption without
losing the Gevrey exponent.

\begin{lemma}[Polynomial absorption by a Gevrey tail]\label{lem:absorb}
For every $a>0$ there are $C,c>0$ such that
\[
x^\ell e^{-ax^\vartheta}
\lesssim C^{\ell+1}(\ell!)^\sigma e^{-cx^\vartheta},
\qquad x\ge0,\ \ell\in\N_0.
\]
\end{lemma}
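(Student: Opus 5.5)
The plan is to split into the cases $x\le1$ and $x\ge1$ and to use the elementary inequality $y^j/j!\le e^{y}$ for $y\ge0$ with a suitable choice of $y$. For $0\le x\le1$ we have $x^\ell e^{-ax^\vartheta}\le1\le e^{c}e^{-cx^\vartheta}$, so any $C\ge e^{c}$ works there. The substance is therefore $x\ge1$.

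\textbf{Splitting the exponential.} For $x\ge1$ I would write $e^{-ax^\vartheta}=e^{-(a/2)x^\vartheta}e^{-(a/2)x^\vartheta}$ and take $c=a/2$. It then suffices to show
\[
 x^\ell e^{-(a/2)x^\vartheta}\le C^{\ell+1}(\ell!)^\sigma ,\qquad x\ge1 .
\]
Put $y=(a/2)x^\vartheta$, so that $x=(2y/a)^{\sigma}$ with $\sigma=1/\vartheta$. Then
\[
 x^\ell=(2/a)^{\sigma\ell}y^{\sigma\ell}.
\]
The question reduces to bounding $y^{\sigma\ell}e^{-y}$.

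\textbf{Bounding $y^{\sigma\ell}e^{-y}$.} The maximum over $y\ge0$ of $y^{t}e^{-y}$ is $t^te^{-t}$, attained at $y=t$. With $t=\sigma\ell$ this gives $(\sigma\ell)^{\sigma\ell}e^{-\sigma\ell}$. Stirling's bound $\ell^\ell e^{-\ell}\le\ell!$ then yields
\[
 (\sigma\ell)^{\sigma\ell}e^{-\sigma\ell}=\sigma^{\sigma\ell}\bigl(\ell^\ell e^{-\ell}\bigr)^{\sigma}\le\sigma^{\sigma\ell}(\ell!)^\sigma .
\]
For $\ell=0$ the left side is interpreted as $1$, consistent with $0^0=1$. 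Altogether
\[
 x^\ell e^{-ax^\vartheta}\le\bigl((2\sigma/a)^{\sigma}\bigr)^{\ell}(\ell!)^\sigma e^{-(a/2)x^\vartheta}.
\]
Taking $C=\max\{(2\sigma/a)^\sigma,e^{a/2}\}$ and $c=a/2$ covers both ranges of $x$.

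\textbf{Remaining point.} The only step needing care is that the constant is uniform in $\ell$, and that it grows at most geometrically, $C^{\ell}$. This is exactly what the exact maximization together with $\ell^\ell e^{-\ell}\le\ell!$ provides. No dependence on $x$ survives apart from the factor $e^{-cx^\vartheta}$.
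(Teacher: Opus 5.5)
Your proof is correct and follows the paper's argument: split $e^{-ax^\vartheta}$ into two halves, substitute $y\propto x^\vartheta$ so that $x^\ell$ becomes a multiple of $y^{\sigma\ell}$, maximize $y^{\sigma\ell}e^{-y}$ exactly, and bound the maximum via $\ell^\ell e^{-\ell}\le\ell!$ (the paper's ``Stirling'' step). Your separate treatment of $0\le x\le1$ is harmless but unnecessary, because the maximization over $y\ge0$ already covers every $x\ge0$.
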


\begin{proof}
Write
\[
x^\ell e^{-ax^\vartheta}
=\bigl(x^\ell e^{-ax^\vartheta/2}\bigr)e^{-ax^\vartheta/2}.
\]
Let $y=x^\vartheta$.  Since $1/\vartheta=\sigma$, $x^\ell=y^{\sigma\ell}$.  The function $y^{\sigma\ell}e^{-ay/2}$ is maximized at $y=2\sigma\ell/a$, and its maximum equals
\[
\left(\frac{2\sigma\ell}{ae}\right)^{\sigma\ell}.
\]
Stirling's formula bounds this by $C^{\ell+1}(\ell!)^\sigma$.  Multiplying by $e^{-ax^\vartheta/2}$ proves the claim.
\end{proof}

Fix $\omega_0\in\Sph^{d-1}$, write $x=s\omega_0+y$ with $y\in\omega_0^\perp$, and define the tangential partial Fourier transform
\[
A_{\omega_0}(s,\zeta)
:=\int_{\omega_0^\perp}e^{-iy\cdot\zeta}\chi(s\omega_0+y)\,dy,
\qquad \zeta\in\omega_0^\perp.
\]
For a unit $e\in\omega_0^\perp$ write $D_{e,\zeta}=e\cdot\nabla_\zeta$.

\begin{lemma}[Uniform Gevrey tangential decay]\label{lem:tangential}
There are constants $C,c>0$, independent of $\omega_0$, such that for
all integers $q,r\ge0$, unit vectors $e\in\omega_0^\perp$, and $(s,\zeta)$,
\[
 |\partial_s^qD_{e,\zeta}^rA_{\omega_0}(s,\zeta)|
 \lesssim C^{q+r+1}(q!)^\sigma e^{-c|\zeta|^\vartheta}.
\]
In particular no additional factor $(r!)^\sigma$ is required.
\end{lemma}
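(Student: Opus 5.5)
The plan is to view $A_{\omega_0}(s,\cdot)$ as the $(d-1)$-dimensional Fourier transform of the slice function $y\mapsto\chi(s\omega_0+y)$ on $\omega_0^\perp$. The two derivatives are handled differently. The $s$-derivatives fall on $\chi$ and cost Gevrey factorials. The $\zeta$-derivatives only produce polynomial weights in $y$, which stay bounded because $\chi$ has compact support. So the argument splits into two steps.

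\emph{First step.} Differentiate under the integral sign:
\[
 \partial_s^qD_{e,\zeta}^rA_{\omega_0}(s,\zeta)
 =\int_{\omega_0^\perp}e^{-iy\cdot\zeta}(-iy\cdot e)^r
 (\omega_0\cdot\nabla)^q\chi(s\omega_0+y)\,dy .
\]
Since $\operatorname{supp}\chi\subset\Omega$ and $\Omega$ is bounded, $|y|\le R_\Omega$ on the support. The factor $(y\cdot e)^r$ is therefore bounded by $R_\Omega^r$, which is absorbed into $C^{r}$. This is exactly why no factor $(r!)^\sigma$ appears.

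\emph{Second step: decay in $\zeta$.} Fix $\zeta\neq0$ and put $u=|\zeta|$, $e'=\zeta/u\in\omega_0^\perp$. I would write $y=te'+y'$ with $y'\perp e',\omega_0$, apply Fubini, and reduce to the one-dimensional integral
\[
 \int_\R e^{-itu}g_{y'}(t)\,dt,\qquad
 g_{y'}(t)=(-i(te'+y')\cdot e)^r(\omega_0\cdot\nabla)^q\chi(s\omega_0+te'+y').
\]
The $\ell$th $t$-derivative of $g_{y'}$ is handled by the Leibniz rule. Derivatives landing on the linear factor give at most $\binom{\ell}{j}r^jR^{r-j}$ with $j\le r$. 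Derivatives landing on $\chi$ give at most $C_1^{q+\ell}((q+\ell-j)!)^\sigma$ by the multi-index Gevrey bound, since directional derivatives are controlled by the multi-index bound up to a factor $d^{q+\ell}$. Using $(q+\ell)!\le2^{q+\ell}q!\,\ell!$, one obtains
\[
 \|g_{y'}^{(\ell)}\|_\infty\lesssim E\,B^\ell(\ell!)^\sigma ,
\]
with $E=C^{q+r+1}(q!)^\sigma$ and $B$ independent of $q,r,\omega_0,s,y'$.

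The $r$-dependent Leibniz terms need care. The factor $r^j\le r!/(r-j)!$ together with $\binom{\ell}{j}$ should be absorbed into $C^r2^\ell$, giving a geometric rather than factorial cost. Because $j\le r$, one also has $r^j\le e^{r}j!$, and $j!\le\ell!$ since $j\le\ell$; the resulting extra factor $\ell!\le(\ell!)^\sigma$ only changes $B$, provided $\sigma$ is replaced by a slightly larger exponent. Alternatively one can avoid this loss by bounding $r^j/j!\le e^r$ directly, which I expect to be the cleaner route.

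Lemma~\ref{lem:gevrey-fourier} then gives decay $\lesssim Ee^{-cu^\vartheta}$ for each $y'$. Integrating over the bounded set of $y'$ finishes the estimate. For $|\zeta|\le1$ the trivial bound from the first step suffices.

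The main obstacle is the uniformity: the constants in Lemma~\ref{lem:gevrey-fourier} must be independent of $\omega_0$, $s$ and $y'$. This holds because all supports lie in a fixed interval $[-R_\Omega,R_\Omega]$ and the Gevrey constants of $\chi$ are rotation-invariant up to the factor $d^{\ell}$. The other delicate point is the bookkeeping that keeps the $r$-dependence geometric, as discussed in the second step.
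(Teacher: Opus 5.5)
Your plan follows the paper's proof. You differentiate under the integral, observe that $\zeta$-derivatives only create the bounded weight $(y\cdot e)^r$, and get decay by differentiating in the direction $\zeta/|\zeta|$. Reducing by Fubini to a one-dimensional integral and quoting Lemma~\ref{lem:gevrey-fourier} is a legitimate repackaging of the paper's $N$-fold integration by parts with $N\simeq|\zeta|^{1/\sigma}$.

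\textbf{The gap.} It lies in the step you flag as delicate, which is the whole content of the lemma. None of your three treatments of the $r$-dependent Leibniz terms works as written.
\begin{itemize}
\item The inequality $r^j\le r!/(r-j)!$ is reversed: the true coefficient is $\frac{r!}{(r-j)!}|e\cdot e'|^j\le r^j$. Moreover, neither quantity can be absorbed into $C^r2^\ell$, since at $j=\ell=r$ they equal $r!$ and $r^r$.
\item The second route, $r^j\le e^rj!$ followed by $j!\le\ell!$ and $(q+\ell-j)!\le(q+\ell)!\le2^{q+\ell}q!\,\ell!$, produces $(\ell!)^{1+\sigma}$. A factorial cannot be absorbed into $B^\ell$, so this changes $\sigma$ to $\sigma+1$, not to a ``slightly larger'' index. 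Any enlargement of $\sigma$ only gives $e^{-c|\zeta|^{\vartheta'}}$ with $\vartheta'<\vartheta=1/\sigma$, which is not the statement.
\item The third route starts correctly but never says where the leftover $j!$ goes.
\end{itemize}

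\textbf{The missing inequality.} When $j$ derivatives fall on the weight, the Gevrey factor from $\chi$ is only $((q+\ell-j)!)^\sigma$, not $((q+\ell)!)^\sigma$. That deficit is exactly what pays for $j!$. Bound
\[
 ((q+\ell-j)!)^\sigma\le2^{\sigma(q+\ell)}(q!)^\sigma((\ell-j)!)^\sigma .
\]
Since $\sigma>1$,
\[
 \binom{\ell}{j}\frac{r!}{(r-j)!}\bigl((\ell-j)!\bigr)^\sigma
 =\binom{r}{j}\,\ell!\,\bigl((\ell-j)!\bigr)^{\sigma-1}
 \le\binom{r}{j}(\ell!)^\sigma .
\]
Summing over $j\le\min\{\ell,r\}$ with the weights $R_\Omega^{r-j}$ costs at most $(1+R_\Omega)^r$. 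Hence
\[
 \|g_{y'}^{(\ell)}\|_\infty\lesssim C^{q+r+1}(q!)^\sigma B^\ell(\ell!)^\sigma ,
\]
with $B$ independent of $q,r,\omega_0,s,y'$. Lemma~\ref{lem:gevrey-fourier} then gives the bound with the exact exponent $\vartheta$. This is the same identity the paper uses, with $N$ in place of $\ell$.

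Alternatively, after $r^j\le e^rj!$, use
\[
 j!\,((\ell-j)!)^\sigma\le(j!\,(\ell-j)!)^\sigma\le(\ell!)^\sigma
\]
together with $\binom{\ell}{j}\le2^\ell$. Either way no change of $\sigma$ is needed. Your first-step remark that $|y\cdot e|^r\le R_\Omega^r$ explains only the undifferentiated bound. The $r$-dependence re-enters once $t$-derivatives hit the weight, and the inequality above is what keeps it geometric.
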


\begin{proof}
Differentiating under the integral gives
\[
 \partial_s^qD_{e,\zeta}^rA_{\omega_0}(s,\zeta)
 =(-i)^r\int_{\omega_0^\perp}e^{-iy\cdot\zeta}
 (y\cdot e)^rD_{\omega_0}^q\chi(s\omega_0+y)\,dy.
\]
All integration sets have uniformly bounded volume, and $|y|\lesssim1$ on
their support. For $\zeta\ne0$, integrate by parts $N$ times in the
direction $e_\zeta=\zeta/|\zeta|$. The Leibniz expansion contains terms
bounded, apart from $C^{N+q+r+1}$, by
\[
 \binom Nj\frac{r!}{(r-j)!}\bigl((N-j+q)!\bigr)^\sigma,
 \qquad 0\le j\le\min\{N,r\}.
\]
Here arbitrary directional derivatives of $\chi$ obey the same Gevrey
bounds as coordinate derivatives, with an additional exponential constant.
Use $(N-j+q)!\le2^{N+q}(N-j)!\,q!$ and, since $\sigma>1$,
\[
 \binom Nj\frac{r!}{(r-j)!}\bigl((N-j)!\bigr)^\sigma
 =\binom rj N!\bigl((N-j)!\bigr)^{\sigma-1}
 \le\binom rj(N!)^\sigma.
\]
Summation over $j$ costs at most $2^r$, rather than a factorial in $r$.
Consequently
\[
 |\partial_s^qD_{e,\zeta}^rA_{\omega_0}|
 \lesssim C^{N+q+r+1}(q!N!)^\sigma|\zeta|^{-N}.
\]
For $|\zeta|$ large choose $N=\lfloor c_0|\zeta|^{1/\sigma}\rfloor$.
The inequality $N!\le N^N$ and a sufficiently small $c_0$ make the
$N$-dependent factor at most $2^{-N}$. For bounded $|\zeta|$ the
undifferentiated integral supplies the bound. This proves the assertion
uniformly under rotations of $\omega_0^\perp$.
\end{proof}

This absence of a factorial in the Fourier-moment order is essential:
those derivatives will introduce powers of $|q|/h$ in the packet proof,
and absorbing those powers uses the available Gevrey factorial once.

\paragraph{Band-pass ridgelet analysis and localization.}\label{sec:packet}

We now incorporate the Gevrey cutoff into a band-pass ridgelet transform.
The next estimates have two roles: the transform is bounded by the
$H^m$ error, and the image of a single neuron is concentrated near its own
direction--bias parameter.

Define
\[
s_m^\sharp:=k-m+\frac{d+1}{2}.
\]
For $0<h\le1$ and $v\in H^m(\Omega)$ set
\[
A_hv(\omega,b)
:=c_d\int_\R e^{itb}\eta(ht)(it)^{k+1}|t|^{d-1-s_m^\sharp}\widehat{\chi v}(t\omega)\,dt.
\]
The product $\chi v$ is extended by zero to $\R^d$.

\begin{proposition}[$H^m$ boundedness of $A_h$]\label{prop:Ah-bounded}
There exists $C>0$, independent of $h$, such that
\[
\|A_hv\|_{L^2(\Sph^{d-1}\times\R)}
\lesssim \|v\|_{H^m(\Omega)}.
\]
\end{proposition}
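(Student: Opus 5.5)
The plan is to compute $\|A_hv\|_{L^2(\Sph^{d-1}\times\R)}$ exactly by Plancherel in the bias variable and then pass to polar coordinates in frequency. For fixed $\omega$, the formula defining $A_hv(\omega,\cdot)$ says that $A_hv(\omega,\cdot)$ is $2\pi c_d$ times the inverse one-dimensional Fourier transform of
\[
 g_\omega(t):=\eta(ht)(it)^{k+1}|t|^{d-1-s_m^\sharp}\widehat{\chi v}(t\omega).
\]
First I will check that $\chi v$, extended by zero, lies in $H^m(\R^d)$ with
\[
 \|\chi v\|_{H^m(\R^d)}\lesssim\|v\|_{H^m(\Omega)}.
\]
This follows from the Leibniz rule, because $\chi$ is smooth with compact support in $\Omega$, so the zero extension creates no boundary terms. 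In particular $\widehat{\chi v}$ is a locally integrable function. Since $\eta(ht)$ vanishes for $|t|<1/(2h)$ and $|t|>2/h$, the integrand is supported in a compact set away from $t=0$. Hence the power $|t|^{d-1-s_m^\sharp}$ causes no singularity.

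Next I compute the power of $|t|$. The exponent is
\[
 2(k+1)+2(d-1-s_m^\sharp)=2k+2d-2k+2m-d-1=2m+d-1.
\]
By one-dimensional Plancherel,
\[
 \int_\R|A_hv(\omega,b)|^2\,db
 =2\pi c_d^2\int_\R|\eta(ht)|^2|t|^{2m}\,|t|^{d-1}\,|\widehat{\chi v}(t\omega)|^2\,dt.
\]
The identity holds whenever the right-hand side is finite, which is the case for a.e. $\omega$ by the Fubini step below. Integrating over $\omega\in\Sph^{d-1}$ and using polar coordinates $\xi=t\omega$ gives
\[
 \|A_hv\|_{L^2(\Sph^{d-1}\times\R)}^2
 =4\pi c_d^2\int_{\R^d}|\eta(h|\xi|)|^2|\xi|^{2m}|\widehat{\chi v}(\xi)|^2\,d\xi.
\]
The factor $2$ comes from the evenness of $\eta$, since both $t>0$ and $t<0$ cover $\R^d$ once. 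Tonelli justifies interchanging the integrals because all integrands are nonnegative.

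Finally, I bound $|\eta(h|\xi|)|$ by $\|\eta\|_\infty$, uniformly in $h$. The remaining integral $\int|\xi|^{2m}|\widehat{\chi v}|^2$ is at most a constant times $\|\chi v\|_{H^m(\R^d)}^2\lesssim\|v\|_{H^m(\Omega)}^2$. The only point requiring care is the bookkeeping of the exponent, which must give exactly $2m$ after the Jacobian $|t|^{d-1}$ is absorbed; $s_m^\sharp$ was defined precisely for this. No part of the argument depends on $h$ except through $\|\eta\|_\infty$.
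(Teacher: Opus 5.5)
Your proposal is correct and follows the paper's proof essentially step for step. You use Plancherel in $b$ with the exponent identity $k+1+d-1-s_m^\sharp=m+(d-1)/2$, then polar coordinates $\xi=t\omega$ with the two-to-one redundancy $(t,\omega)\sim(-t,-\omega)$, and finally boundedness of multiplication by $\chi$ from $H^m(\Omega)$ to $H^m(\R^d)$. One small point: the factor $2$ comes from that double cover itself, and evenness of $\eta$ only lets you write the integrand as $|\eta(h|\xi|)|^2$; since you bound by $\|\eta\|_\infty$ anyway, this is immaterial.
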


\begin{proof}
Since
\[
k+1+d-1-s_m^\sharp=m+\frac{d-1}{2},
\]
one-dimensional Plancherel in $b$ yields
\[
\|A_hv\|_2^2
\simeq\int_{\Sph^{d-1}}\int_\R
|\eta(ht)|^2|t|^{2m+d-1}|\widehat{\chi v}(t\omega)|^2\,dt\,d\omega.
\]
Using polar coordinates $\xi=t\omega$ and absorbing the fixed two-to-one redundancy $(t,\omega)\sim(-t,-\omega)$,
\[
\|A_hv\|_2^2
\lesssim\int_{\R^d}|\xi|^{2m}|\widehat{\chi v}(\xi)|^2\,d\xi
\lesssim\|\chi v\|_{H^m(\R^d)}^2.
\]
Multiplication by the fixed compactly supported smooth function $\chi$ is bounded from $H^m(\Omega)$ to $H^m(\R^d)$, proving the result.
\end{proof}

For $\theta=(\omega,b)$ and $\theta'=(\omega',b')$ put
\[
d_\Theta(\theta,\theta'):=d_{\Sph}(\omega,\omega')+|b-b'|.
\]

\begin{theorem}[Localized Gevrey packet of one dictionary atom]\label{thm:packet}
Let $|\omega_0|=1$ and let $b_0$ range over a fixed bounded set.  Then
\[
|A_h[\sigma_k(\omega_0\cdot x-b_0)](\omega,b)|
\lesssim h^{\beta_m-d/2}\sum_{\eps=\pm1}
\exp\left[-c\left(
\frac{d_\Sph(\omega,\eps\omega_0)+|b-\eps b_0|}{h}
\right)^\vartheta\right].
\]
\end{theorem}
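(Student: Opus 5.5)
The plan is to compute $\widehat{\chi v}$ for the single ridge $v(x)=\sigma_k(\omega_0\cdot x-b_0)$ explicitly in coordinates adapted to $\omega_0$. Then we read off the two localizations: in bias from the band-pass factor $\eta(ht)$, and in direction from the Gevrey tangential decay of Lemma~\ref{lem:tangential}.

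\textbf{Fourier transform of the cut-off ridge.} Write $x=s\omega_0+y$ with $y\in\omega_0^\perp$, and decompose a frequency $\xi=t\omega$ as $\xi=\xi_1\omega_0+\zeta$ with $\zeta\in\omega_0^\perp$. Then
\[
 \widehat{\chi v}(\xi)=\int_\R e^{-is\xi_1}(s-b_0)_+^k\,A_{\omega_0}(s,\zeta)\,ds .
\]
Since $\chi$ is compactly supported, this is a compactly supported integral in $s$. We insert it into the definition of $A_h$ and perform the $t$-integral first.

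\textbf{Localization in bias.} The $t$-integral has the form
\[
 \int e^{it(b-s\,\omega\cdot\omega_0)}\eta(ht)(it)^{k+1}|t|^{d-1-s_m^\sharp}\,A_{\omega_0}\bigl(s,t\,P_{\omega_0^\perp}\omega\bigr)\,dt .
\]
We use $(it)^{k+1}e^{-its\,\omega\cdot\omega_0}$ to integrate by parts $k+1$ times in $s$. Up to the factor $(\omega\cdot\omega_0)^{k+1}$, this moves all derivatives onto $(s-b_0)_+^kA_{\omega_0}(s,\cdot)$ and produces a delta at $s=b_0$ times $\partial_s^jA_{\omega_0}$ terms. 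If instead $\omega\cdot\omega_0$ is small, the tangential decay already applies, so either route works.

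Near $\omega\approx\pm\omega_0$ the leading term is
\[
 \int e^{it(b-b_0\,\omega\cdot\omega_0)}\eta(ht)|t|^{d-1-s_m^\sharp}\,A_{\omega_0}(b_0,tP\omega)\,dt
\]
plus smoother pieces. Substitute $t=r/h$. The amplitude $g(r)=\eta(r)|r|^{\cdots}A_{\omega_0}(b_0,rP\omega/h)$ is supported in $\tfrac12\le|r|\le2$. By Lemma~\ref{lem:tangential}, its $r$-derivatives obey Gevrey-$\sigma$ bounds with prefactor $E=e^{-c(|P\omega|/h)^\vartheta}$. The point is that $D_\zeta^r$ brings factors $|P\omega|^r/h^r$, which are absorbed by Lemma~\ref{lem:absorb} without factorial loss in $r$ — exactly the remark after Lemma~\ref{lem:tangential}. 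Lemma~\ref{lem:gevrey-fourier} then gives
\[
 e^{-c(|b-b_0\omega\cdot\omega_0|/h)^\vartheta}\,e^{-c(|P\omega|/h)^\vartheta}.
\]
Since $|P\omega|\simeq d_\Sph(\omega,\pm\omega_0)$ and $|b_0|(1-|\omega\cdot\omega_0|)\lesssim d_\Sph^2$, we can replace $b_0\,\omega\cdot\omega_0$ by $\pm b_0$ at the cost of changing $c$. The two signs $\eps=\pm1$ come from $t>0$ and $t<0$.

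\textbf{Power of $h$.} The substitution contributes $h^{-1}h^{-(d-1-s_m^\sharp)}=h^{s_m^\sharp-d}$, and
\[
 s_m^\sharp-d=k-m+\tfrac{1-d}{2}=\beta_m-\tfrac d2 .
\]
This is the claimed amplitude, so the $(it)^{k+1}$ factor is exactly consumed by the kink of $(s-b_0)_+^k$.

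\textbf{Main obstacle.} The main obstacle is the remainder terms after integration by parts in $s$. These are the endpoint terms with derivatives $\partial_s^jA_{\omega_0}(b_0,\cdot)$ and the regular interior terms. The interior terms carry $(\omega\cdot\omega_0)^{-(k+1)}$ and residual oscillation $e^{-its\omega\cdot\omega_0}$ integrated over $s$ against the compactly supported Gevrey function $\partial_s^{k+1}[(s-b_0)_+^kA]$, which is smooth away from $s=b_0$.

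For those interior terms, the plan is to treat the $(s,t)$-integral jointly as a two-dimensional Gevrey Fourier integral. The claim to establish is that it is $O(E\,e^{-c(\cdot/h)^\vartheta})$ away from the bias $b=s\,\omega\cdot\omega_0$ for $s$ in the support of $\chi$ along $\omega_0$. The concern is that this could spread the bias localization over the whole support rather than concentrate it at $b_0$. If a direct bound fails, the fallback is to use the exact identity that the $(k+1)$st $s$-derivative of $(s-b_0)_+^k$ is $k!\,\delta_{b_0}$. Choosing $\chi$ so that it is constant in the $s$-direction on the relevant region would then make the interior terms vanish near the hard targets, leaving only Gevrey-small contributions from the cutoff's transition region. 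These last contributions would have to be controlled by the uniform tangential decay together with $\partial_s$ Gevrey bounds.
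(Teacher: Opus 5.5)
Your treatment of the boundary term coming from $\partial_s^{k+1}(s-b_0)_+^k=k!\,\delta_{b_0}$ is correct, and so is your power count $h^{s_m^\sharp-d}=h^{\beta_m-d/2}$. However, the step that carries the content of the theorem is left open.

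\textbf{The gap.} You never show that the interior terms, of the form $\int(s-b_0)_+^{k-j}\,\partial_s^{k+1-j}A_{\omega_0}(s,tq)\,e^{-itsc}\,ds$ with $c=\omega\cdot\omega_0$ and $q=P_{\omega_0^\perp}\omega$, produce a response localized at $b\approx b_0c$. Because you perform the $t$-integral first, each slice $s$ is localized at $b\approx sc$. That is exactly the spreading you worry about, and no estimate is given that removes it.

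The fallback is not available. First, $\chi$ is fixed once and for all, and the bound must hold uniformly in $\omega_0$. Second, $A_{\omega_0}(s,\zeta)$ integrates $\chi$ over the whole section $\{s\omega_0+y:y\perp\omega_0\}$, including the region where $\chi\not\equiv1$, so it is never constant in $s$. The transition-region pieces you would be left with are the same interior terms.

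The far-angular case $|c|<1/2$ has a related gap. There the right-hand side must also decay in $|b-\eps b_0|/h$ for $b\in\R$. That requires regularity of the amplitude in the radial frequency, not just a factor $e^{-ch^{-\vartheta}}$.

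\textbf{The missing idea.} Reverse the order: analyze the $s$-integral at fixed frequency, and do the frequency integral last.

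With $u=s-b_0$ one has exactly $\widehat{\chi v}(t\omega)=e^{-ib_0tc}J_k(tc,tq)$, where $J_k(\lambda,\zeta)=\int_0^\infty u^ke^{-i\lambda u}A_{\omega_0}(b_0+u,\zeta)\,du$. All the bias dependence then sits in the explicit phase.

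Next, for $|c|\ge1/2$, show that $a_h(r,\omega)=h^{-(k+1)}J_k(rc/h,rq/h)$ satisfies $|\partial_r^\ell a_h|\lesssim C^{\ell+1}(\ell!)^\sigma e^{-c_1(|q|/h)^\vartheta}$ on $\tfrac12\le|r|\le2$, uniformly in $h$. The mechanism is as follows.
\begin{itemize}
\item If $a$ of the $\ell$ derivatives hit $e^{-ircu/h}$, they contribute $(cu/h)^a$. This raises the order of vanishing of $u^{k+a}$ at $u=0$.
\item Consequently $k+a+1$ integrations by parts in $u$, not merely $k+1$, return $h^{k+a+1}$ and cancel those powers of $h$.
\item The remaining $\ell-a$ derivatives act on the tangential argument and give $(|q|/h)^{\ell-a}$. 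These are absorbed by Lemma~\ref{lem:tangential} and Lemma~\ref{lem:absorb}, using $a!\,(\ell-a)!\le\ell!$.
\end{itemize}
Your interior terms are thereby absorbed into a Gevrey symbol rather than estimated separately. A single application of Lemma~\ref{lem:gevrey-fourier} to $\int e^{ir(b-b_0c)/h}M_0(r)a_h(r,\omega)\,dr$ then gives the bias localization, with no spreading over the support of $\chi$.

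For $|c|<1/2$, differentiate $a_h$ directly without integrating by parts. The factor $h^{-(k+1+\ell)}$ is absorbed into $e^{-c(|q|/h)^\vartheta}$, since $|q|\ge\sqrt3/2$ there.

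\textbf{A minor correction.} The two terms $\eps=\pm1$ come from $\omega$ near $\pm\omega_0$, that is $c\approx\pm1$. They do not come from the two signs of $t$; both signs of $t$ contribute to each term.
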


\begin{proof}
\textbf{Step 1: normal--tangential Fourier representation.}
Write $x=s\omega_0+y$, $y\perp\omega_0$, and decompose
$\xi=\lambda\omega_0+\zeta$, $\zeta\perp\omega_0$.  Since
$\sigma_k(\omega_0\cdot x-b_0)=(s-b_0)_+^k$,
\[
\widehat{\chi\,\sigma_k(\omega_0\cdot x-b_0)}(\xi)
=\int_\R e^{-i\lambda s}(s-b_0)_+^kA_{\omega_0}(s,\zeta)\,ds.
\]
With $u=s-b_0$,
\[
\widehat{\chi\,\sigma_k(\omega_0\cdot x-b_0)}(\xi)
=e^{-ib_0\lambda}J_k(\lambda,\zeta),
\]
where
\[
J_k(\lambda,\zeta)
:=\int_0^\infty u^ke^{-i\lambda u}A_{\omega_0}(b_0+u,\zeta)\,du.
\]
The integral is over a fixed finite interval because $\chi$ is compactly supported.

\textbf{Step 2: near-conormal derivative estimate.}
Put
\[
c:=\omega\cdot\omega_0,
\qquad
q:=P_{\omega_0^\perp}\omega.
\]
For $\xi=t\omega$, $\lambda=tc$ and $\zeta=tq$.  Assume first $|c|\ge1/2$.  On $1/2\le|r|\le2$ define
\[
a_h(r,\omega):=h^{-(k+1)}J_k\left(\frac{rc}{h},\frac{rq}{h}\right).
\]
We claim
\[
|\partial_r^\ell a_h(r,\omega)|
\lesssim C^{\ell+1}(\ell!)^\sigma
\exp\left[-c_1\left(\frac{|q|}{h}\right)^\vartheta\right].
\]
Differentiate $\ell$ times.  If $a$ derivatives hit the normal phase and $b=\ell-a$ hit the tangential argument, then
\[
\partial_r^\ell a_h
=h^{-(k+1+\ell)}\sum_{a+b=\ell}C_{a,b,\ell}c^a
\int_0^\infty u^{k+a}e^{-ircu/h}
(q\cdot\nabla_\zeta)^bA_{\omega_0}\left(b_0+u,\frac{rq}{h}\right)du.
\]
Set
\[
H_{a,b}(u):=u^{k+a}(q\cdot\nabla_\zeta)^bA_{\omega_0}\left(b_0+u,\frac{rq}{h}\right).
\]
Then $H_{a,b}^{(j)}(0)=0$ for $0\le j<k+a$.  Integrating by parts $k+a+1$ times in $u$, using $|r|\simeq1$ and $|c|\ge1/2$, gives
\[
\left|\int_0^\infty e^{-ircu/h}H_{a,b}(u)du\right|
\lesssim h^{k+a+1}
\left(|H_{a,b}^{(k+a)}(0)|+\|H_{a,b}^{(k+a+1)}\|_1\right).
\]
The prefactor becomes
\[
h^{-(k+1+\ell)}h^{k+a+1}=h^{-b}.
\]
If $q\ne0$, $(q\cdot\nabla_\zeta)^b=|q|^bD_{e_q,\zeta}^b$.  Set $K=k+a$. By Lemma~\ref{lem:tangential}, differentiation $j$
times in $u$ gives
\[
 |H_{a,b}^{(j)}(u)|
 \lesssim C^{K+b+j+1}|q|^b e^{-c_2(|q|/h)^\vartheta}
 \sum_{t=0}^{\min\{j,K\}}
 \binom jt\frac{K!}{(K-t)!}\bigl((j-t)!\bigr)^\sigma.
\]
The polynomial factor $u^{K-t}$ is bounded by $C^{K+1}$ on the fixed
integration interval. As in the preceding lemma,
\[
 \binom jt\frac{K!}{(K-t)!}\bigl((j-t)!\bigr)^\sigma
 \le \binom Kt(j!)^\sigma.
\]
Since $j\le k+a+1$ and $k$ is fixed, this yields
\[
 |H_{a,b}^{(j)}(u)|
 \lesssim C^{\ell+1}(a!)^\sigma |q|^b
 e^{-c_2(|q|/h)^\vartheta}.
\]
The same bound controls the boundary value and $L^1$ norm in the
integration-by-parts estimate. Multiplication by $h^{-b}$ introduces
$(|q|/h)^b$. Lemma~\ref{lem:absorb} absorbs this power at cost
$C^{b+1}(b!)^\sigma$ and a smaller exponential constant. Now
$a!b!\le\ell!$ because $a+b=\ell$. Summing the binomial coefficients
therefore proves
\[
 |\partial_r^\ell a_h(r,\omega)|
 \lesssim C^{\ell+1}(\ell!)^\sigma
 e^{-c_1(|q|/h)^\vartheta}.
\]
When $q=0$, the terms with $b>0$ vanish and the same conclusion follows.

\textbf{Step 3: insert the band-pass multiplier.}
Set
\[
M_0(r):=\eta(r)(ir)^{k+1}|r|^{d-1-s_m^\sharp}.
\]
Because $\eta$ is Gevrey and supported away from zero,
\[
|M_0^{(\ell)}(r)|\lesssim C^{\ell+1}(\ell!)^\sigma.
\]
Hence $G_h(r,\omega):=M_0(r)a_h(r,\omega)$ satisfies
\[
|\partial_r^\ell G_h(r,\omega)|
\lesssim C^{\ell+1}(\ell!)^\sigma e^{-c_3(|q|/h)^\vartheta}.
\]

\textbf{Step 4: bias localization.}
Substitute $t=r/h$ in $A_h$ and use the Fourier representation from Step 1.  The powers of $h$ are
\[
h^{k+1}h^{-(m+(d-1)/2)}h^{-1}
=h^{k-m-(d-1)/2}=h^{\beta_m-d/2}.
\]
Thus
\[
A_h[\sigma_k(\omega_0\cdot x-b_0)](\omega,b)
=c_dh^{\beta_m-d/2}
\int e^{ir(b-b_0c)/h}G_h(r,\omega)\,dr.
\]
Lemma~\ref{lem:gevrey-fourier} gives, for $|c|\ge1/2$,
\[
|A_h[\sigma_k(\omega_0\cdot x-b_0)](\omega,b)|
\lesssim h^{\beta_m-d/2}
 e^{-c_4(|q|/h)^\vartheta}
 e^{-c_4(|b-b_0c|/h)^\vartheta}.
\]

\textbf{Step 5: far-angular region.}
If $|c|<1/2$, then $|q|\ge\sqrt3/2$.  Here no integration by parts in the
normal variable is needed.  Differentiate the definition of $a_h$ directly.
Each of the $\ell$ derivatives produces either $cu/h$ from the normal
phase or $(q/h)\cdot\nabla_\zeta$ from the tangential argument.
Lemma~\ref{lem:tangential}, used with $q=0$ in its own notation, has
only an exponential constant in the number of Fourier derivatives.
Since $u$ ranges over a fixed bounded interval, it follows that
\[
 |\partial_r^\ell a_h(r,\omega)|
 \lesssim C^{\ell+1}h^{-(k+1+\ell)}
 e^{-c(|q|/h)^\vartheta}.
\]
For $|q|\ge\sqrt3/2$, apply Lemma~\ref{lem:absorb} to the power
$k+1+\ell$. The fixed shift $k+1$ in the factorial can be absorbed
into $C^{\ell+1}$, giving
\[
 |\partial_r^\ell a_h(r,\omega)|
 \lesssim C^{\ell+1}(\ell!)^\sigma e^{-c_5h^{-\vartheta}}.
\]
Leibniz' rule gives the same estimate for
$G_h=M_0a_h$.  Lemma~\ref{lem:gevrey-fourier}, applied to the $r$ integral
in Step~4, now gives
\[
|A_h[\sigma_k(\omega_0\cdot x-b_0)](\omega,b)|
\lesssim h^{\beta_m-d/2}e^{-c_5h^{-\vartheta}}
 e^{-c_5|b-b_0c|^\vartheta/h^\vartheta},
\]
which is stronger than required because $d_\Sph(\omega,\{\pm\omega_0\})$ is then bounded below.

\textbf{Step 6: replace the curved center by the two antipodal points.}
Suppose $\omega$ is close to $\eps\omega_0$, $\eps\in\{\pm1\}$, and put $\theta=d_\Sph(\omega,\eps\omega_0)$.  Then
\[
|q|\simeq\theta,
\qquad
|c-\eps|\lesssim \theta^2.
\]
Since $b_0$ stays bounded,
\[
|b-\eps b_0|
\le|b-b_0c|+C\theta^2
\le|b-b_0c|+C\theta.
\]
Hence
\[
\frac{\theta+|b-\eps b_0|}{h}
\lesssim \left(\frac{|q|}{h}+\frac{|b-b_0c|}{h}\right).
\]
For $0<\vartheta<1$, $x^\vartheta+y^\vartheta\ge(x+y)^\vartheta$.  Combining the near and far estimates and summing over $\eps=\pm1$ proves the theorem.
\end{proof}

The preceding bound describes one neuron's localized response.
For the targets, radial symmetry gives an exact response whose
amplitude and width can be computed without an estimate.

\begin{lemma}[Exact packet of a radial critical bump]\label{lem:exact-bump}
Choose a nonzero radial function
$\psi\in C_c^\infty(B(0,r_0))$.  Require that the annular multiplier below
is not identically zero, and let
\[
u_{h,x_0}(x)=h^k\psi\left(\frac{x-x_0}{h}\right),
\]
with $\chi\equiv1$ on its support.  Then
\[
A_hu_{h,x_0}(\omega,b)
=h^{\tau_m-1/2}
\Psi\left(\frac{b-\omega\cdot x_0}{h}\right),
\]
where
\[
\Psi(s)
:=c_d\int_\R e^{irs}\eta(r)(ir)^{k+1}|r|^{d-1-s_m^\sharp}\widehat\psi_0(|r|)\,dr
\]
is a fixed nonzero Schwartz function.  Consequently
\[
\|A_hu_{h,x_0}\|_{L^2(\Sph^{d-1}\times\R)}\simeq h^{\tau_m}.
\]
\end{lemma}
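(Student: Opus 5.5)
The plan is to compute $A_hu_{h,x_0}$ exactly from the definition of $A_h$, using radial symmetry and scaling, and then to read off the $L^2$ norm by a change of variables.

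\textbf{Fourier transform of the bump.} Since $\chi\equiv1$ on $\operatorname{supp}u_{h,x_0}$, we have $\chi u_{h,x_0}=u_{h,x_0}$. The scaling and translation rules for the Fourier transform give
\[
\widehat{u_{h,x_0}}(\xi)=h^{k+d}e^{-ix_0\cdot\xi}\widehat\psi(h\xi).
\]
Because $\psi$ is radial, $\widehat\psi(\xi)=\widehat\psi_0(|\xi|)$ for an even profile $\widehat\psi_0$. At $\xi=t\omega$ this becomes
\[
\widehat{u_{h,x_0}}(t\omega)=h^{k+d}e^{-it\,\omega\cdot x_0}\widehat\psi_0(h|t|).
\]

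\textbf{Exact formula for the transform.} Substitute this into the definition of $A_h$ and change variables $r=ht$. The phase becomes
\[
e^{itb}e^{-it\omega\cdot x_0}=e^{ir(b-\omega\cdot x_0)/h}.
\]
The powers of $h$ combine as follows:
\begin{itemize}
\item $h^{k+d}$ from the Fourier transform;
\item $h^{-(k+1)}$ from $(it)^{k+1}$;
\item $h^{-(d-1-s_m^\sharp)}$ from the power of $|t|$;
\item $h^{-1}$ from $dt$.
\end{itemize}
The total exponent is $k+d-k-1-(d-1)+s_m^\sharp-1=s_m^\sharp-1=k-m+(d-1)/2=\tau_m-1/2$. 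This yields
\[
A_hu_{h,x_0}(\omega,b)=h^{\tau_m-1/2}\Psi\left(\frac{b-\omega\cdot x_0}{h}\right),
\]
with $\Psi$ as displayed in the statement. Note that $\Psi$ is independent of $\omega$; this is exactly where radiality is used.

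\textbf{$\Psi$ is Schwartz and nonzero.} $\Psi$ is the inverse Fourier transform of the multiplier
\[
g(r)=c_d\,\eta(r)(ir)^{k+1}|r|^{d-1-s_m^\sharp}\widehat\psi_0(|r|).
\]
On $\operatorname{supp}\eta\subset\{1/2\le|r|\le2\}$ every factor is $C^\infty$: the power $|r|^{\cdot}$ is smooth away from $0$, and $\widehat\psi_0$ is real-analytic. Hence $g\in C_c^\infty(\R)$ and $\Psi\in\mathcal S(\R)$. By injectivity of the Fourier transform, $\Psi\ne0$ precisely when $g\not\equiv0$, which is the hypothesis on the annular multiplier. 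This hypothesis can be met: $\widehat\psi_0$ is analytic and not identically zero, so it cannot vanish on the whole annulus where $\eta\neq0$.

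\textbf{The $L^2$ norm.} For each fixed $\omega$, substitute $s=(b-\omega\cdot x_0)/h$, so $db=h\,ds$. This gives
\[
\int_\R|A_hu_{h,x_0}(\omega,b)|^2\,db=h^{2\tau_m-1}\,h\,\|\Psi\|_{L^2}^2=h^{2\tau_m}\|\Psi\|_{L^2}^2.
\]
Integrating over $\Sph^{d-1}$ gives $\|A_hu_{h,x_0}\|_{L^2(\Sph^{d-1}\times\R)}\simeq h^{\tau_m}$, with constant $|\Sph^{d-1}|^{1/2}\|\Psi\|_2$, which is positive because $\Psi\ne0$.

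The argument is a direct computation. The only step requiring care is the exponent bookkeeping, together with keeping $\widehat\psi_0$ consistent with the paper's Fourier convention.
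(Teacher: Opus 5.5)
Your proposal is correct and follows the paper's proof essentially step for step. It uses the scaled Fourier transform of the bump, radiality to remove the $\omega$-dependence, the substitution $r=ht$ with total exponent $s_m^\sharp-1=\tau_m-1/2$, Fourier injectivity and compact frequency support for $\Psi$, and the change of variables $s=(b-\omega\cdot x_0)/h$ for the $L^2$ norm. Your extra observation is also correct and is a small supplement the paper only assumes: the annular nonvanishing hypothesis can always be met, because $\widehat\psi_0$ is real-analytic and not identically zero.
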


\begin{proof}
The Fourier transform of the translated and scaled bump is
\[
\widehat u_{h,x_0}(t\omega)
=h^{k+d}e^{-it\omega\cdot x_0}\widehat\psi(ht\omega).
\]
Radiality gives $\widehat\psi(ht\omega)=\widehat\psi_0(|ht|)$.  Substitute into the definition of $A_h$ and set $r=ht$.  The total power of $h$ is
\[
h^{k+d}h^{-(m+(d-1)/2)}h^{-1}
=h^{k-m+(d-1)/2}=h^{\tau_m-1/2},
\]
which yields the displayed formula.  The $r$-integrand is compactly supported and, by choice of $\psi$, not identically zero, so Fourier injectivity gives $\Psi\ne0$ and smooth compact support in frequency gives $\Psi\in\mathcal S$.

Finally, for fixed $\omega$ change variables $s=(b-\omega\cdot x_0)/h$:
\[
\int_\R|A_hu_{h,x_0}(\omega,b)|^2db
=h^{2\tau_m}\|\Psi\|_2^2.
\]
Integration over the sphere gives the asserted equivalence.
\end{proof}

\paragraph{Direction--bias hard families.}\label{sec:refined}

The packet transform turns the remaining lower-bound problem into a cell
count.  We place critical bumps along an interior cylinder, associate one
packet cell with each bump and angular direction, and show that a width-$n$
network can influence only a controlled number of those cells.

Let
\[
\mathcal C
=\{x_*+te_1+y:0\le t\le T,\ y\perp e_1,\ |y|\le r_*\}\Subset\Omega.
\]
Fix $s_*\in\R$ with $|\Psi(s_*)|>0$.  By continuity choose $a_*,c_\Psi>0$ such that
\[
|\Psi(s)|\ge2c_\Psi
\quad\text{whenever }|s-s_*|\le2a_*.
\]
Choose a spherical patch
\[
U\Subset\{\omega\in\Sph^{d-1}:\omega_1\ge c_0\}
\]
with $c_0>0$.

\section{Proof of the lower bounds and completion of the main theorem}
\label{sec:lower-proof}

We now turn the analytic localization estimates of
Section~\ref{sec:lower-lemmas} into quantitative lower bounds.  The proof is
organized around the geometry of parameter-space cells.  We first establish
the angular and direction--bias estimates, and finally combine the two lower branches
with the upper theorem. Each hard target may depend on the width, as allowed
by the minimax definition.

\subsection{The angular lower bound}

The scalar radial obstruction already applies to even-order Laplacians.
For odd Sobolev orders we first lift it to radial gradients by a
spherical-mean difference. This step preserves the number of ridge
directions.

\begin{theorem}[Vector radial-gradient obstruction]\label{thm:vector-obstruction}
Fix $R>0$ and $x_0$ with $B(x_0,4R)\Subset\Omega$.
Let $V_h(x)=h^aV((x-x_0)/h)$ with fixed nonzero radial $V\in\mathcal S(\R^d)$.  Let
\[
\mathcal V_N
:=\left\{\sum_{j=1}^{N}\omega_jq_j(\omega_j\cdot x+b_j):
|\omega_j|=1,\ q_j\in L^2_{\rm loc}(\R)\right\}.
\]
There exist $c_0,c_1,h_0>0$ such that, whenever $0<h<h_0$ and $N\le c_1h^{-(d-1)}$,
\[
\inf_{G\in\mathcal V_N}\|\nabla V_h-G\|_{L^2(B(x_0,2R))}
\ge c_0h^{a-1+d/2}.
\]
\end{theorem}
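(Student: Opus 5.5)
The plan is to reduce the vector obstruction to the scalar one, Lemma~\ref{lem:scaled-klm}, by integrating the ridge sum and then applying the spherical-mean difference $T_t$ with $t=\eta h$.

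First I would observe that every $G\in\mathcal V_N$ is a gradient of a scalar ridge sum. Write $G=\sum_{j=1}^N\omega_jq_j(\omega_j\cdot x+b_j)$ and choose primitives $Q_j(u)=\int_0^uq_j$. Since $q_j\in L^2_{\rm loc}(\R)$, each $Q_j$ lies in $H^1_{\rm loc}(\R)$. Set $\Phi:=\sum_jQ_j(\omega_j\cdot x+b_j)$. Because $|\omega_j|=1$, the chain rule for $H^1_{\rm loc}$ profiles composed with affine maps gives $\nabla\Phi=G$ weakly. Hence $F:=V_h-\Phi\in H^1(B(x_0,2R))$ and $\nabla F=\nabla V_h-G$.

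Second, I would fix the $\eta>0$ of Lemma~\ref{lem:potential-scale} and restrict to $h<h_0$ with $\eta h_0<R/2$. Lemma~\ref{lem:mean-diff} with $t=\eta h$ then gives
\[
\|T_{\eta h}V_h-T_{\eta h}\Phi\|_{L^2(B(x_0,R))}\lesssim\|\nabla V_h-G\|_{L^2(B(x_0,2R))},
\]
with a constant independent of $h$. By Lemma~\ref{lem:ridge-preserve}, $T_{\eta h}\Phi$ is a sum of at most $N$ ridges $\widetilde Q_{j}(\omega_j\cdot x+b_j)$. Each profile is $(Q_j-Q_j*\mu_t)/t$ with $\mu_t$ a compactly supported probability measure, so it still lies in $L^2_{\rm loc}(\R)$. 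Thus $T_{\eta h}\Phi\in\mathcal R_N$. By Lemma~\ref{lem:potential-scale}, $T_{\eta h}V_h=h^{a-1}\Psi_\eta((x-x_0)/h)$ with $\Psi_\eta$ a fixed nonzero radial Schwartz function.

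Third, I would apply Lemma~\ref{lem:scaled-klm} to $\varphi=\Psi_\eta$ with exponent $a-1$; its hypothesis $B(x_0,4R)\Subset\Omega$ is exactly the one assumed here. For $N\le c_1h^{-(d-1)}$ and $h$ small this yields
\[
\|h^{a-1}\Psi_\eta((\cdot-x_0)/h)-T_{\eta h}\Phi\|_{L^2(B(x_0,R))}\ge c_0h^{a-1+d/2}.
\]
Combining this with the second step and taking the infimum over $G$ proves the theorem, after shrinking $c_0$ by the constant from Lemma~\ref{lem:mean-diff}.

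I expect the main obstacle to be the first step: making the identity $\nabla\Phi=G$ rigorous for merely $L^2_{\rm loc}$ profiles, and checking that $T_{\eta h}\Phi$ stays within the class $\mathcal R_N$ of Lemma~\ref{lem:scaled-klm} (local $L^2$ profiles, same directions, no increase in count). I would handle it by a direct Fubini computation in coordinates adapted to $\omega_j$, approximating $q_j$ in $L^2_{\rm loc}$ by smooth functions. Lemma~\ref{lem:mean-diff} is stated for all of $H^1(B(x_0,2R))$, so the estimate passes to the limit.
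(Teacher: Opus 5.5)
Your proposal is correct and follows the paper's proof essentially step for step. The paper likewise writes $G=\nabla R_N$ via absolutely continuous primitives $Q_j$, applies Lemma~\ref{lem:mean-diff} to $V_h-R_N$ with $t=\eta h$, and uses Lemmas~\ref{lem:ridge-preserve} and~\ref{lem:potential-scale} to identify $T_{\eta h}R_N$ as an $N$-term ridge sum and $T_{\eta h}V_h$ as a fixed radial bump of amplitude $h^{a-1}$. It then concludes with Lemma~\ref{lem:scaled-klm} at exponent $a-1$; your added care about $\nabla\Phi=G$ for $L^2_{\rm loc}$ profiles only makes explicit what the paper asserts in one line.
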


\begin{proof}
Given $G=\sum_{j=1}^{N}\omega_jq_j(\omega_j\cdot x+b_j)$, choose locally absolutely continuous primitives $Q_j$ with $Q_j'=q_j$ and set
\[
R_N(x)=\sum_{j=1}^{N}Q_j(\omega_j\cdot x+b_j).
\]
Then $\nabla R_N=G$ almost everywhere.  Apply Lemma~\ref{lem:mean-diff} to $V_h-R_N$ with $t=\eta h$:
\[
\|\nabla V_h-G\|_{L^2(B(x_0,2R))}
\ge c\|T_{\eta h}(V_h-R_N)\|_{L^2(B(x_0,R))}.
\]
By Lemma~\ref{lem:ridge-preserve}, $T_{\eta h}R_N$ is a sum of at most $N$ scalar ridge profiles.  By Lemma~\ref{lem:potential-scale}, $T_{\eta h}V_h$ is a fixed nonzero radial bump of scale $h$ and amplitude $h^{a-1}$.  Lemma~\ref{lem:scaled-klm} applied with exponent $a-1$ gives the stated lower bound.
\end{proof}

The scalar obstruction for even derivatives and the gradient
obstruction for odd derivatives now yield a uniform lower bound
in every integer Sobolev norm under consideration.

\begin{theorem}[Angular lower bound for finite dictionary networks]\label{thm:angular-lower}
Let $d\ge2$, $0<p<1$, $0<q\le1$, and $0\le m\le k$.  For every $M\in[0,\infty]$ there is $c>0$, independent of $n$ and $M$, such that, for every integer $n\ge1$,
\[
E_{n,m}(B_{B_{p,q}^{k+d/p}(\Omega)};M)
\ge cn^{-A_m}.
\]
\end{theorem}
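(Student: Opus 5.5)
We use a single radial critical bump as the hard target. Fix $x_0$, $R$ with $B(x_0,4R)\Subset\Omega$, a nonzero radial $\psi\in C_c^\infty(B(0,1))$, and $\psi_h$ as in \eqref{eq:psih-def}. By Lemma~\ref{lem:single-besov}, $\psi_h/C$ lies in the unit ball of $B_{p,q}^{k+d/p}(\Omega)$ uniformly in $h$. We take $h\simeq (n/c_1)^{-1/(d-1)}$, so that $n\le c_1h^{-(d-1)}$. It then suffices to show that for every $g\in\Sigma_{n,\infty}$ (no budget needed, which gives independence of $M$)
\[
 \|\psi_h-g\|_{H^m(\Omega)}\gtrsim h^{k-m+d/2}=h^{\tau_m}\simeq n^{-A_m}.
\]
The finitely many $n$ with $h\ge h_0$ are absorbed into the constant. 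For these, note that the $\Sigma_{n,\infty}$ error of a fixed nonpolynomial-type bump is positive, or simply apply the construction with $h_0/2$ and $n$ bounded.

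The reduction to the scalar and vector obstructions goes by the parity of $m$. Since $\|\cdot\|_{H^m}$ dominates $\|D^\alpha\cdot\|_{L^2(B(x_0,2R))}$ for $|\alpha|=m$, we bound a suitable combination of the top-order derivatives. If $m=2\ell$, apply $\Delta^\ell$. The quantity $\|\Delta^\ell(\psi_h-g)\|_{L^2}$ is bounded by a constant times the $H^m$ norm. Moreover $\Delta^\ell g$ is a sum of at most $n$ ridges $\sigma_k^{(2\ell)}(\omega_r\cdot x-b_r)$ with $L^2_{\rm loc}$ profiles, since $2\ell\le k$ makes the profiles truncated powers or indicators. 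Finally $\Delta^\ell\psi_h=h^{k-2\ell}(\Delta^\ell\psi)((\cdot-x_0)/h)$, where $\Delta^\ell\psi$ is a nonzero radial Schwartz function, because a compactly supported polyharmonic function vanishes. Lemma~\ref{lem:scaled-klm} with $a=k-2\ell$ then gives the lower bound $c_0h^{k-m+d/2}$. If $m=2\ell+1$, apply $\nabla\Delta^\ell$ and set $V_h=\Delta^\ell\psi_h$, a radial Schwartz bump with exponent $a=k-2\ell$. The function $\nabla\Delta^\ell g=\sum_r a_r\omega_r\,\sigma_k^{(2\ell+1)}(\omega_r\cdot x-b_r)$ lies in $\mathcal V_n$, with profiles in $L^2_{\rm loc}$ since $2\ell+1\le k$. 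Theorem~\ref{thm:vector-obstruction} yields $c_0h^{a-1+d/2}=c_0h^{\tau_m}$. In both cases the ridge biases are unrestricted in the cited results, so the restriction $b\in[-c,c]$ and the budget $M$ play no role.

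The main technical point is to check that the derivative identities hold in the weak sense on $B(x_0,2R)$ for $m=k$. There the profile $\sigma_k^{(k)}$ is an indicator and is still in $L^2_{\rm loc}$, which is enough for the classes $\mathcal R_N$ and $\mathcal V_N$. We also need that the $L^2$ norm of the relevant derivative combination is controlled by the full $H^m(\Omega)$ norm, which is immediate. Finally, the constants $c_0,c_1,h_0$ from the cited obstruction results must be independent of $M$, which holds because those results impose no coefficient constraint. With $h^{\tau_m}\simeq n^{-\tau_m/(d-1)}=n^{-A_m}$, this proves the theorem.
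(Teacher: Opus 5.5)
Your proposal is correct and follows the paper's proof essentially step by step. You use a single normalized radial critical bump $\psi_h$ at a scale $h\simeq n^{-1/(d-1)}$, reduce even $m$ via $\Delta^{\ell}$ to Lemma~\ref{lem:scaled-klm} and odd $m$ via $\nabla\Delta^{\ell}$ to Theorem~\ref{thm:vector-obstruction}, and never use the bias range or the coefficient budget. Your observation that $\Delta^{\ell}\psi\neq0$ holds automatically for nonzero compactly supported $\psi$ slightly tidies the paper's ``replace $\psi$ if necessary''.
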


\begin{proof}
Fix $R>0$ and $x_0$ with $B(x_0,4R)\Subset\Omega$.
Choose a nonzero radial $\psi\in C_c^\infty(B(0,1))$ and define $\psi_h$ as in Section~\ref{sec:scaling}.  Lemma~\ref{lem:single-besov} gives a uniform critical Besov norm.

If $m=2r$ is even, replace $\psi$ if necessary so that $\Delta^r\psi\not\equiv0$.  Then
\[
\Delta^r\psi_h(x)=h^{k-m}(\Delta^r\psi)\left(\frac{x-x_0}{h}\right).
\]
Distributional differentiation of a dictionary ridge gives
\[
\Delta^r(\omega\cdot x-b)_+^k=c_{k,m}(\omega\cdot x-b)_+^{k-m},
\]
which is an ordinary locally square-integrable univariate profile because $m\le k$.  Thus for every $g\in\Sigma_{n,M}$, $\Delta^rg$ is a sum of at most $n$ arbitrary ridge profiles.  The differential operator is bounded from $H^m$ to $L^2$ on an interior ball, hence
\[
\|\psi_h-g\|_{H^m(\Omega)}
\ge c\|\Delta^r(\psi_h-g)\|_{L^2(B(x_0,R))}.
\]
Lemma~\ref{lem:scaled-klm}, with amplitude exponent $k-m$, yields
\[
\|\psi_h-g\|_{H^m}\ge ch^{k-m+d/2}=ch^{\tau_m}
\]
whenever $n\le c_1h^{-(d-1)}$.

If $m=2r+1$ is odd, set $V_h=\Delta^r\psi_h$.  Then
\[
V_h(x)=h^{k-m+1}V\left(\frac{x-x_0}{h}\right)
\]
with fixed nonzero radial $V=\Delta^r\psi$.  Moreover
\[
\nabla\Delta^r(\omega\cdot x-b)_+^k
=c_{k,m}\omega(\omega\cdot x-b)_+^{k-m},
\]
so $\nabla\Delta^rg\in\mathcal V_n$.  Theorem~\ref{thm:vector-obstruction} with $a=k-m+1$ yields the same lower bound $ch^{\tau_m}$.

Choose a dyadic $h_n\simeq n^{-1/(d-1)}$ small enough for the corresponding counting condition, and normalize $f_n=C_B^{-1}\psi_{h_n}$ using Lemma~\ref{lem:single-besov}.  Then
\[
\sigma_{n,m}(f_n;M)
\ge ch_n^{\tau_m}
\simeq n^{-\tau_m/(d-1)}=n^{-A_m}.
\]
No coefficient bound was used, so the constant is uniform in $M$, including $M=\infty$.
\end{proof}

The angular bound is independent of the coefficient budget.
The remaining branch uses the bounded mass to exclude the small
Gevrey tails of all active neurons simultaneously.

\subsection{Direction--bias cell counting}

\paragraph{Logarithmically separated bump rows.}

For the refined construction define
\[
R_h:=A(\log(1/h))^{1/\vartheta},
\qquad
L_h:=L_*R_h,
\]
where $A,L_*>1$ are fixed sufficiently large constants.  Place the centers
\[
x_j=x_*+jL_hhe_1,
\qquad
1\le j\le K_h,
\qquad
K_h:=\left\lfloor\frac{c_*}{hR_h}\right\rfloor\simeq(hR_h)^{-1},
\]
where $c_*$ is small enough that all bump supports stay inside $\mathcal C$.  Define
\[
u_{h,j}(x)=h^k\psi\left(\frac{x-x_j}{h}\right),
\qquad
F_h:=K_h^{-1/p}\sum_{j=1}^{K_h}\eps_j u_{h,j}.
\]
By Lemma~\ref{lem:row-besov}, after one fixed rescaling of $\psi$ we may assume
\[
\|F_h\|_{B_{p,q}^{k+d/p}(\Omega)}\le1
\]
for all small dyadic $h$ and all signs.

\begin{lemma}[Suppression of inter-bump packet interference]\label{lem:interference}
If $L_*$ is sufficiently large, then for all sufficiently small $h$,
\[
\sup_{\substack{|s-s_*|\le a_*\\ \omega\in U}}
\sum_{r\in\mathbb Z\setminus\{0\}}
|\Psi(s+rL_h\omega_1)|\le c_\Psi.
\]
\end{lemma}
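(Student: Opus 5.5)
We only need the decay of the fixed Schwartz function $\Psi$ from Lemma~\ref{lem:exact-bump}, together with the size of the spacing $L_h\omega_1$. For $\omega\in U$ we have $\omega_1\ge c_0>0$, so the spacing satisfies $L_h\omega_1\ge c_0L_h=c_0L_*R_h$. Because $R_h=A(\log(1/h))^{1/\vartheta}\to\infty$ as $h\to0$, this spacing tends to infinity. The plan is therefore to bound each term $|\Psi(s+rL_h\omega_1)|$ by a rapidly decaying function of $|r|L_h\omega_1$ and then sum. This only uses $L_h\omega_1$ large; no finer information about the row geometry enters.

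First, since $\Psi\in\mathcal S(\R)$, there is a constant $C_\Psi$ with $|\Psi(x)|\le C_\Psi(1+|x|)^{-2}$ for all $x\in\R$. Next, for $|s-s_*|\le a_*$ and $r\ne0$, the triangle inequality gives
\[
 |s+rL_h\omega_1|\ge |r|c_0L_h-|s_*|-a_*\ge \tfrac12|r|c_0L_h .
\]
The second inequality holds as soon as $c_0L_h\ge2(|s_*|+a_*)$, which is true for all sufficiently small $h$. Combining these two facts, we get
\[
 \sum_{r\ne0}|\Psi(s+rL_h\omega_1)|
 \le 2C_\Psi\sum_{r\ge1}\bigl(1+\tfrac12rc_0L_h\bigr)^{-2}
 \le \frac{8C_\Psi}{c_0^2L_h^2}\sum_{r\ge1}r^{-2}
 \lesssim L_h^{-2}.
\]
This bound is uniform in $\omega\in U$ and in $|s-s_*|\le a_*$.

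Since $L_h=L_*R_h\ge L_*A$ for small $h$, the right-hand side is at most $C(L_*A)^{-2}$. It is therefore below the fixed threshold $c_\Psi$ once $L_*$ is chosen sufficiently large, depending only on $\Psi$, $c_0$, $s_*$, $a_*$ and $A$. In fact the sum tends to zero as $h\to0$ for any fixed $L_*$. The largeness of $L_*$ is only needed so that the bound holds uniformly down to some fixed $h_0$ and so that the requirement $c_0L_h\ge2(|s_*|+a_*)$ is met.

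The only point needing care is the uniformity in $\omega$. It rests on $U$ being compactly contained in the cap $\{\omega_1\ge c_0\}$, and this holds by the choice of $U$. I expect no real obstacle in this proof.
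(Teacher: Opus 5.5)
Your proof is correct and follows essentially the same route as the paper. Both arguments combine Schwartz decay of $\Psi$ (you take the exponent $N=2$) with the separation bound $|s+rL_h\omega_1|\ge\tfrac12|r|c_0L_h$ coming from $\omega_1\ge c_0$ on $U$. Summing gives $O(L_h^{-N})$, which falls below $c_\Psi$ once $L_*$ is large. Your added observation is also right: since $R_h\to\infty$, the sum tends to zero for any fixed $L_*$.
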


\begin{proof}
Since $\Psi\in\mathcal S(\R)$, for every $N>1$,
\[
|\Psi(t)|\lesssim (1+|t|)^{-N}.
\]
For $|s-s_*|\le a_*$ and $\omega_1\ge c_0$, choose $L_*$ so large that for every $r\ne0$,
\[
|s+rL_h\omega_1|
\ge \frac12|r|L_hc_0
\]
for all small $h$.  Then
\[
\sum_{r\ne0}|\Psi(s+rL_h\omega_1)|
\lesssim L_h^{-N}\sum_{r\ne0}|r|^{-N}.
\]
Since $L_h=L_*R_h\ge L_*$, the right side is made at most $c_\Psi$ by choosing $L_*$ sufficiently large.
\end{proof}

Partition $U$ into shape-regular cells $U_{\ell,h}$ of diameter $\simeq h$ and surface measure $\simeq h^{d-1}$.  Their number satisfies
\[
P_h\simeq h^{-(d-1)}.
\]
Define the direction--bias cells
\[
Q_{\ell,j}
:=\left\{(\omega,b):\omega\in U_{\ell,h},\ 
\left|\frac{b-\omega\cdot x_j}{h}-s_*\right|<a_*\right\}.
\]

\begin{lemma}[Cell geometry for refined spacing]\label{lem:refined-cells}
For $L_*$ sufficiently large the cells $Q_{\ell,j}$ are pairwise disjoint up to null boundaries and satisfy
\[
|Q_{\ell,j}|\simeq h^d,
\qquad
M_h:=\#\{Q_{\ell,j}\}=P_hK_h\simeq h^{-d}R_h^{-1}.
\]
\end{lemma}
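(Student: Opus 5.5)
The plan is to check three things directly from the definitions: that each cell $Q_{\ell,j}$ has measure $\simeq h^d$, that the cells are pairwise disjoint, and that their number is $P_hK_h$.

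\textbf{Measure.} For a fixed direction $\omega\in U_{\ell,h}$, the admissible biases form an interval of length $2a_*h$, centred at $\omega\cdot x_j+hs_*$. By Fubini on $\Sph^{d-1}\times\R$,
\[
|Q_{\ell,j}|=2a_*h\,|U_{\ell,h}|\simeq h\cdot h^{d-1}=h^d.
\]

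\textbf{Disjointness.} Suppose $(\omega,b)$ lies in two cells. If the two cells have different angular indices $\ell\neq\ell'$, the point lies in $U_{\ell,h}\cap U_{\ell',h}$, which is a null set because the angular cells partition $U$. So assume $\ell=\ell'$ and $j\ne j'$. The two defining inequalities then give
\[
\bigl|\omega\cdot(x_j-x_{j'})\bigr|<2a_*h .
\]
Since $x_j-x_{j'}=(j-j')L_hhe_1$, the left side equals $|j-j'|L_hh\,\omega_1$. On $U$ we have $\omega_1\ge c_0$, so the left side is at least $L_hhc_0\ge L_*c_0h$. Choosing $L_*>2a_*/c_0$ therefore rules out any overlap. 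This is the same separation mechanism already used in Lemma~\ref{lem:interference}, so a single large choice of $L_*$ serves both lemmas.

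\textbf{Counting.} The number of cells is exactly $P_hK_h$. Using $P_h\simeq h^{-(d-1)}$ and $K_h\simeq(hR_h)^{-1}$ gives
\[
M_h\simeq h^{-(d-1)}(hR_h)^{-1}=h^{-d}R_h^{-1}.
\]
The factor $R_h=A(\log(1/h))^{1/\vartheta}$ grows, but only logarithmically, so $K_h\ge1$ for all small $h$. The centres satisfy $x_j-x_*=jL_hhe_1$ with $jL_hh\le K_hL_*R_hh\le c_*L_*$. After shrinking $c_*$ this is at most $T$, so every support stays inside $\mathcal C$ as required.

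\textbf{Main difficulty.} There is essentially no obstacle. The only point requiring care is disjointness, because the bias windows are centred at the $\omega$-dependent points $\omega\cdot x_j$. Restricting to $\omega_1\ge c_0$ is what gives uniform separation of these windows across different bumps.
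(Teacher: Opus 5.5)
Your proposal is correct and follows the paper's proof essentially step for step. It uses Fubini for $|Q_{\ell,j}|\simeq h\,|U_{\ell,h}|\simeq h^d$, disjointness for $\ell\ne\ell'$ from the angular partition, disjointness for $j\ne j'$ from the bias-centre separation $|j-j'|L_hh\,\omega_1\ge c_0L_hh$ on $U$, and the count $P_hK_h\simeq h^{-(d-1)}(hR_h)^{-1}$. Your threshold $L_*>2a_*/c_0$ (using $R_h\ge1$ for small $h$) is a slightly sharper version of the paper's condition $L_*R_hc_0>4a_*$, and either one suffices.
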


\begin{proof}
For fixed $\omega$, the bias interval defining $Q_{\ell,j}$ has length $2a_*h$, so
\[
|Q_{\ell,j}|\simeq h\,|U_{\ell,h}|\simeq h^d.
\]
If $\ell\ne\ell'$, the angular cells are disjoint up to boundaries.  For fixed $\ell$ and $j\ne j'$, the bias centers differ by
\[
|\omega\cdot(x_j-x_{j'})|
=|j-j'|L_hh\,\omega_1
\ge c_0L_hh.
\]
For $L_*R_hc_0>4a_*$ the corresponding bias intervals are disjoint.  Finally
\[
M_h\simeq h^{-(d-1)}(hR_h)^{-1}=h^{-d}R_h^{-1}.
\]
\end{proof}

Disjoint parameter cells are not enough for a lower bound. Each
cell must also carry a target signal bounded away from zero, uniformly
in the signs of the bumps.

\begin{lemma}[Uniform target energy on every refined cell]\label{lem:target-cell}
For all sufficiently small $h$ and every target cell,
\[
|A_hF_h(\omega,b)|
\ge c h^{dG_{p,m}}R_h^{1/p},
\qquad (\omega,b)\in Q_{\ell,j}.
\]
\end{lemma}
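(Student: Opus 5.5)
The plan is to compute $A_hF_h$ exactly, using linearity of $A_h$ and Lemma~\ref{lem:exact-bump}. We then isolate the contribution of the bump $u_{h,j}$ attached to the cell $Q_{\ell,j}$, and show that all other bumps together contribute at most half of it, by Lemma~\ref{lem:interference}. No estimate on a single neuron is needed here; the statement concerns only the target.

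\textbf{Step 1 (exact packet of the row).} The cutoff $\chi$ equals one on the compact set containing all hard targets, in particular on every $\operatorname{supp}u_{h,i}\subset\mathcal C$. Moreover, $\psi$ is the fixed radial bump chosen in Lemma~\ref{lem:exact-bump}, with nonvanishing annular multiplier. Hence, for each $i$,
\[
 A_hu_{h,i}(\omega,b)=h^{\tau_m-1/2}\Psi\left(\frac{b-\omega\cdot x_i}{h}\right).
\]
Since $A_h$ is linear,
\[
 A_hF_h(\omega,b)=K_h^{-1/p}h^{\tau_m-1/2}
 \sum_{i=1}^{K_h}\eps_i\Psi\left(\frac{b-\omega\cdot x_i}{h}\right).
\]

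\textbf{Step 2 (reindexing on the cell).} Fix $(\omega,b)\in Q_{\ell,j}$ and put $s:=(b-\omega\cdot x_j)/h$, so that $|s-s_*|<a_*$ and $\omega\in U$. Because $x_j-x_i=(j-i)L_hhe_1$, we have
\[
 \frac{b-\omega\cdot x_i}{h}=s+(j-i)L_h\omega_1 .
\]
The sum is therefore $\eps_j\Psi(s)$ plus terms $\eps_i\Psi(s+rL_h\omega_1)$ with $r=j-i\neq0$. These $r$ range over a subset of $\mathbb Z\setminus\{0\}$.

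\textbf{Step 3 (separating main term from interference).} For the main term, $|\Psi(s)|\ge2c_\Psi$ because $|s-s_*|\le a_*\le2a_*$. For the remainder, Lemma~\ref{lem:interference} bounds the sum of its absolute values by $c_\Psi$ uniformly over the cell, and this bound is independent of the signs $\eps_i$. The reverse triangle inequality then gives
\[
 |A_hF_h(\omega,b)|\ge c_\Psi K_h^{-1/p}h^{\tau_m-1/2}.
\]

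\textbf{Step 4 (bookkeeping of powers).} The definition $K_h\le c_*/(hR_h)$ gives
\[
 K_h^{-1/p}\ge c_*^{-1/p}h^{1/p}R_h^{1/p},
\]
and hence
\[
 |A_hF_h|\ge c\,h^{\tau_m-1/2+1/p}R_h^{1/p}.
\]
Finally, $dG_{p,m}=\tau_m+1/p-1/2$ by definition, which is the claimed bound.

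The main obstacle is Step~3: the interference must be controlled uniformly in the signs and across the whole cell. This is exactly what the spacing $L_h=L_*R_h$ and the restriction $\omega_1\ge c_0$ on $U$ were designed for, and Lemma~\ref{lem:interference} supplies the required bound. The remaining point to check is only that $\chi\equiv1$ on the supports, so that Lemma~\ref{lem:exact-bump} applies verbatim at each centre $x_i$.
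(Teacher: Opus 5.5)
Your proposal is correct and follows the paper's proof essentially step for step. You obtain the exact row formula from Lemma~\ref{lem:exact-bump} and linearity, isolate the $j$th term with $|\Psi|\ge 2c_\Psi$, bound the remaining terms by Lemma~\ref{lem:interference} uniformly in the signs, and convert $K_h^{-1/p}h^{\tau_m-1/2}$ into $h^{dG_{p,m}}R_h^{1/p}$. Your use of only the one-sided bound $K_h\le c_*/(hR_h)$ in the last step is a slightly tidier version of the paper's appeal to $K_h\simeq(hR_h)^{-1}$.
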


\begin{proof}
Lemma~\ref{lem:exact-bump} gives
\[
A_hF_h(\omega,b)
=K_h^{-1/p}h^{\tau_m-1/2}
\sum_{j'=1}^{K_h}\eps_{j'}
\Psi\left(\frac{b-\omega\cdot x_{j'}}{h}\right).
\]
On $Q_{\ell,j}$ the $j$th argument lies within $a_*$ of $s_*$ and hence its absolute contribution is at least $2c_\Psi$.  By Lemma~\ref{lem:interference}, the sum of the absolute values of all other terms is at most $c_\Psi$.  Thus
\[
|A_hF_h|
\ge cK_h^{-1/p}h^{\tau_m-1/2}.
\]
Since $K_h\simeq(hR_h)^{-1}$,
\[
K_h^{-1/p}h^{\tau_m-1/2}
\simeq h^{\tau_m+1/p-1/2}R_h^{1/p}
=h^{dG_{p,m}}R_h^{1/p}.
\]
\end{proof}

For each angular cell choose a representative $\omega_{\ell,h}$ and define
\[
z_{\ell,j}:=(\omega_{\ell,h},\omega_{\ell,h}\cdot x_j+hs_*).
\]
There is a fixed $C_0$ such that $Q_{\ell,j}\subset B_\Theta(z_{\ell,j},C_0h)$.

\begin{lemma}[One neuron is near only $O(R_h^{d-1})$ refined cells]\label{lem:refined-count}
For every neuron parameter $\theta=(\omega_0,b_0)$,
\[
\#\{(\ell,j):d_\Theta(z_{\ell,j},\{\theta,-\theta\})\le R_hh\}
\lesssim R_h^{d-1}.
\]
\end{lemma}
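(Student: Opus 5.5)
By the symmetry $d_\Theta(z,-\theta)=d_\Theta(-z,\theta)$ and the fact that $-z_{\ell,j}$ is another point of the same lattice-type configuration, we count separately the indices with $d_\Theta(z_{\ell,j},\theta)\le R_hh$ and with $d_\Theta(z_{\ell,j},-\theta)\le R_hh$. The second count is handled exactly like the first, with $\theta$ replaced by $-\theta$. So it suffices to treat one fixed neuron parameter $\theta=(\omega_0,b_0)$, and the plan is to count angular indices and bias indices separately and multiply.

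\textbf{Angular count.} The condition forces $d_{\Sph}(\omega_{\ell,h},\omega_0)\le R_hh$. The angular cells $U_{\ell,h}$ are shape-regular, with diameter $\simeq h$ and surface measure $\simeq h^{d-1}$. Each cell whose representative lies in the cap of radius $R_hh$ about $\omega_0$ is therefore contained in the cap of radius $R_hh+Ch$. Comparing measures shows that the number of admissible $\ell$ is at most
\[
 \frac{(R_hh+Ch)^{d-1}}{h^{d-1}}\lesssim R_h^{d-1},
\]
because $R_h\ge A>1$.

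\textbf{Bias count for fixed $\ell$.} The condition also forces
\[
 |\omega_{\ell,h}\cdot x_j+hs_*-b_0|\le R_hh .
\]
As $j$ varies, the numbers $\omega_{\ell,h}\cdot x_j=\omega_{\ell,h}\cdot x_*+jL_hh\,(\omega_{\ell,h})_1$ form an arithmetic progression. Since $\omega_{\ell,h}\in U$, its step is at least $c_0L_hh=c_0L_*R_hh$. An interval of length $2R_hh$ therefore contains at most
\[
 1+\frac{2R_hh}{c_0L_*R_hh}\lesssim 1
\]
points of this progression, uniformly in $h$, $\ell$ and $b_0$. This is exactly where the logarithmic spacing $L_h=L_*R_h$ is designed to match the localization radius $R_hh$.

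Multiplying the two counts gives $O(R_h^{d-1})$ for $\theta$, and the same bound holds for $-\theta$. The only point needing care is the angular covering step: one must check that shape-regularity gives the uniform lower bound $|U_{\ell,h}|\gtrsim h^{d-1}$, so that the cells inside the enlarged cap can be counted by volume. The bias step is elementary once $\omega_1\ge c_0$ on $U$ is used. No property of the packet transform enters this lemma.
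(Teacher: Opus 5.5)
Your proof is correct and follows the paper's argument: you treat each of $\theta,-\theta$ separately, bound the number of angular cells meeting a cap of radius $O(R_hh)$ by $O(R_h^{d-1})$, and use the bias spacing $L_hh\,\omega_{\ell,h,1}\ge c_0L_*R_hh$ to get $O(1)$ indices $j$ per angular cell, then multiply. One aside is inaccurate, though harmless: $-z_{\ell,j}$ is not a point of the same configuration, since $-\omega_{\ell,h}\notin U$; this does not matter, because your single-center count is uniform in $(\omega_0,b_0)\in\Sph^{d-1}\times\R$ and so applies verbatim to $-\theta$.
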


\begin{proof}
Consider one of the two antipodal centers.  The angular condition confines $\omega_{\ell,h}$ to a spherical ball of radius $O(R_hh)$.  Since the angular net spacing is $h$, such a ball contains at most $CR_h^{d-1}$ angular cells.  Fix one of them.  As $j$ varies, the bias centers have spacing
\[
L_hh\,\omega_{\ell,h,1}
\ge c_0L_*R_hh.
\]
An interval of length $O(R_hh)$ therefore contains only $O(1)$ indices $j$.  Multiplying the two counts gives $CR_h^{d-1}$.  The antipodal center changes only the constant.
\end{proof}

The signal estimate and the cell count can now be combined. The
fixed coefficient budget makes the sum of all far-neuron tails small,
which is the step where this lower bound differs from the angular one.

\begin{theorem}[Refined fixed-budget $p$-dependent lower bound]\label{thm:refined-packet-lower}
Let $d\ge2$, $0<p<1$, $0<q\le1$, $0\le m\le k$, and assume that $\Omega$ contains a nontrivial closed cylinder.  Fix $0\le M<\infty$ and $0<\vartheta<1$.  Then for all $n\ge2$,
\[
E_{n,m}(B_{B_{p,q}^{k+d/p}(\Omega)},M)
\ge c\,n^{-G_{p,m}}(1+\log(2+n))^{-\tau_m/\vartheta}.
\]
Consequently, for every $\eps>0$,
\[
 E_{n,m}(B_{B_{p,q}^{k+d/p}(\Omega)};M)
 \ge c_{\eps,M}n^{-G_{p,m}}(1+\log(2+n))^{-\tau_m-\eps}.
\]
\end{theorem}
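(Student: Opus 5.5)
The plan is to use the Gevrey packet transform $A_h$ as a linear test functional and run a cell-counting argument. Fix a width-$n$ network $g=\sum_{r\le n}a_r\sigma_k(\omega_r\cdot x-b_r)$ with $\sum|a_r|\le M$. By Proposition~\ref{prop:Ah-bounded},
\[
\|F_h-g\|_{H^m(\Omega)}\gtrsim\|A_hF_h-A_hg\|_{L^2(\Sph^{d-1}\times\R)}.
\]
It therefore suffices to find many cells $Q_{\ell,j}$ on which $|A_hg|$ is at most half the target level $ch^{dG_{p,m}}R_h^{1/p}$ of Lemma~\ref{lem:target-cell}. Each such cell contributes at least $c h^{2dG_{p,m}}R_h^{2/p}|Q_{\ell,j}|\simeq h^{2dG_{p,m}+d}R_h^{2/p}$ to the squared error.

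\textbf{Step 1: split neurons into near and far.} For each cell, call neuron $r$ \emph{near} if $d_\Theta(z_{\ell,j},\{\theta_r,-\theta_r\})\le R_hh$, and \emph{far} otherwise. By Lemma~\ref{lem:refined-count}, each neuron is near at most $CR_h^{d-1}$ cells. So at most $CnR_h^{d-1}$ cells are touched by any near neuron. Choose $h$ so that $CnR_h^{d-1}\le M_h/2$, where $M_h\simeq h^{-d}R_h^{-1}$ by Lemma~\ref{lem:refined-cells}. This amounts to $n\simeq c\,h^{-d}R_h^{-d}$, i.e. $h\simeq n^{-1/d}R_h^{-1}$. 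Then at least half the cells are \emph{clean}, meaning no neuron is near them.

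\textbf{Step 2: the far tails are negligible.} On a clean cell, any $(\omega,b)\in Q_{\ell,j}\subset B_\Theta(z_{\ell,j},C_0h)$ is at distance at least $R_hh-C_0h\ge R_hh/2$ from $\pm\theta_r$ for every $r$. Theorem~\ref{thm:packet} then gives
\[
|A_hg(\omega,b)|\le \sum_r|a_r|\,C h^{\beta_m-d/2}e^{-c(R_h/2)^\vartheta}\le CMh^{\beta_m-d/2}h^{c'A^\vartheta},
\]
using $R_h^\vartheta=A^\vartheta\log(1/h)$. The fixed budget $M$ is used exactly here. Choosing $A$ large (depending on $M,\vartheta$) makes this smaller than $\tfrac12 ch^{dG_{p,m}}R_h^{1/p}$, since $dG_{p,m}$ is a fixed finite exponent. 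One technical point: Theorem~\ref{thm:packet} requires $b_r$ in a bounded set, which holds because $b_r\in[-c,c]$.

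\textbf{Step 3: assemble and convert to $n$.} Summing over the clean cells gives
\[
\|F_h-g\|_{H^m}^2\gtrsim M_h\,h^{2dG_{p,m}+d}R_h^{2/p}\simeq h^{2dG_{p,m}}R_h^{2/p-1}.
\]
Substitute $h\simeq n^{-1/d}R_h^{-1}$ with $R_h\simeq(\log n)^{1/\vartheta}$. This gives the error bound
\[
n^{-G_{p,m}}R_h^{-dG_{p,m}+1/p-1/2}=n^{-G_{p,m}}R_h^{-\tau_m},
\]
because $dG_{p,m}=\tau_m+1/p-1/2$. That is $n^{-G_{p,m}}L_n^{-\tau_m/\vartheta}$. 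The bound is uniform over the network, and $F_h$ lies in the unit ball, so it is a lower bound for $E_{n,m}$. For the second statement, choose $\vartheta$ close to $1$ so that $\tau_m/\vartheta\le\tau_m+\eps$. Small $n$ and the dyadic rounding of $h$ only affect constants.

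The main obstacle is Step 2 together with the choice of scale in Step 1. The logarithmic separation $R_h$ must be large enough that Gevrey tails summed over all $n$ neurons beat a polynomially small signal. At the same time it must be small enough that the near-cell count $nR_h^{d-1}$ stays below $M_h$. The $\log$ powers lost in this balance are exactly what produce the factor $L_n^{-\tau_m/\vartheta}$.
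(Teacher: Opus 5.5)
Your proposal is correct and follows the paper's proof essentially step for step: the same near/far classification of cells via Lemma~\ref{lem:refined-count}, the counting condition $nR_h^d\lesssim h^{-d}$, suppression of the far Gevrey tails using the fixed budget with $A$ chosen so that $cA^\vartheta>d+1/p-1$, the measure $\gtrsim R_h^{-1}$ of the far cells combined with Proposition~\ref{prop:Ah-bounded}, and the choice $h_n\simeq n^{-1/d}L_n^{-1/\vartheta}$ together with the identity $dG_{p,m}-(1/p-1/2)=\tau_m$. One cosmetic difference: $A$ need not depend on $M$, because the budget enters only as a constant factor and therefore affects only how small $h$ must be.
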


\begin{proof}
Let
\[
 g=\sum_{r=1}^{N}a_r\sigma_k(\omega_r\cdot x-b_r)\in\Sigma_{n,M},
\qquad N\le n.
\]
Call a target cell $R_h$-near if its center lies within $R_hh$ of one of the
two packet centers $\{(\omega_r,b_r),(-\omega_r,-b_r)\}$ for some neuron.
Lemma~\ref{lem:refined-count} gives
\[
\#\mathcal N_h(g)\lesssim nR_h^{d-1}.
\]
The total number of target cells is $M_h\simeq h^{-d}R_h^{-1}$.  Therefore, if
\[
nR_h^d\le c_*h^{-d},
\]
with $c_*$ sufficiently small, at least a fixed proportion of all cells are far:
\[
\#\mathcal F_h(g)\ge ch^{-d}R_h^{-1}.
\]

Let $Q_{\ell,j}$ be far and $(\omega,b)\in Q_{\ell,j}$.  Since the cell diameter is $O(h)$ and $R_h\to\infty$, for small $h$
\[
 d_\Theta((\omega,b),
 \{(\omega_r,b_r),(-\omega_r,-b_r)\})\ge cR_hh
\]
for every neuron.  The packet localization theorem therefore yields
\[
|A_hg(\omega,b)|
\le\sum_{r=1}^{N}|a_r|\,
 |A_h[\sigma_k(\omega_r\cdot x-b_r)](\omega,b)|
\lesssim Mh^{\beta_m-d/2}e^{-cR_h^\vartheta}.
\]
Because $R_h=A(\log(1/h))^{1/\vartheta}$,
\[
e^{-cR_h^\vartheta}=h^{cA^\vartheta}.
\]
Moreover
\[
dG_{p,m}-\left(\beta_m-\frac d2\right)
=d+\frac1p-1>0.
\]
Choose $A$ so large that $cA^\vartheta>d+1/p$.  Then
\[
|A_hg|=o\bigl(h^{dG_{p,m}}R_h^{1/p}\bigr).
\]
Combining this with Lemma~\ref{lem:target-cell}, for all small $h$,
\[
|A_h(F_h-g)(\omega,b)|
\ge ch^{dG_{p,m}}R_h^{1/p}
\]
on every far cell.

The far cells are disjoint, each has measure $\simeq h^d$, and there are $\gtrsim h^{-d}R_h^{-1}$ of them.  Their union therefore has measure $\gtrsim R_h^{-1}$.  Hence
\[
\|A_h(F_h-g)\|_2
\ge ch^{dG_{p,m}}R_h^{1/p}R_h^{-1/2}
=ch^{dG_{p,m}}R_h^{1/p-1/2}.
\]
Proposition~\ref{prop:Ah-bounded} gives
\[
\|F_h-g\|_{H^m(\Omega)}
\ge ch^{dG_{p,m}}R_h^{1/p-1/2}.
\]

The counting condition is ensured by
\[
h^{-d}\ge Cn(\log(1/h))^{d/\vartheta}.
\]
Choose dyadic $h_n$ comparable to
\[
h_n\simeq n^{-1/d}(1+\log(2+n))^{-1/\vartheta}.
\]
Then $R_{h_n}\simeq(1+\log(2+n))^{1/\vartheta}$ and Lemma~\ref{lem:row-besov} makes $F_{h_n}$ an admissible unit-ball target.  Thus
\[
\sigma_{n,m}(F_{h_n};M)
\gtrsim
h_n^{dG_{p,m}}R_{h_n}^{1/p-1/2}.
\]
Since
\[
dG_{p,m}-\left(\frac1p-\frac12\right)=\tau_m,
\]
the logarithmic power simplifies to
\[
-\frac{dG_{p,m}}\vartheta+\frac{1/p-1/2}{\vartheta}
=-\frac{\tau_m}{\vartheta},
\]
proving the first estimate.

For the second assertion, given $\eps>0$ choose
\[
\vartheta=\frac{\tau_m}{\tau_m+\eps}\in(0,1).
\]
Then $\tau_m/\vartheta=\tau_m+\eps$.
\end{proof}

\paragraph{Optimal algebraic exponent in one-scale bump--cell constructions.}
\label{sec:one-scale}

\begin{remark}[Interpretation of the two exponents]\label{prop:alpha-opt}
Suppose a one-scale hard family contains
\[
K_h\simeq h^{-\alpha},
\qquad 0\le\alpha\le1,
\]
separated critical $h$-bumps and uses all $h^{-(d-1)}$ angular cells generated by each bump.  Then the algebraic lower-bound exponent generated by this mechanism is
\[
\gamma(\alpha)
=\frac{\tau_m+\alpha(1/p-1/2)}{d-1+\alpha},
\]
and
\[
\min_{0\le\alpha\le1}\gamma(\alpha)
=\min\{A_m,G_{p,m}\}.
\]

The number of direction--bias cells is
\[
N_h\simeq K_hh^{-(d-1)}=h^{-(d-1+\alpha)}.
\]
After $\ell^p$ normalization, disjointness of the physical bumps gives the $H^m$ scale
\[
K_h^{1/2-1/p}h^{\tau_m}
=h^{\tau_m+\alpha(1/p-1/2)}.
\]
Ignoring logarithmic packet-exclusion factors, the critical width is $n\simeq N_h$, hence
\[
h\simeq n^{-1/(d-1+\alpha)}.
\]
Substitution produces $n^{-\gamma(\alpha)}$ with the displayed $\gamma$.

Let $a=1/p-1/2$.  A direct differentiation gives
\[
\gamma'(\alpha)
=\frac{a(d-1)-\tau_m}{(d-1+\alpha)^2},
\]
whose sign is independent of $\alpha$.  Thus the minimum on $[0,1]$ occurs at an endpoint:
\[
\gamma(0)=\frac{\tau_m}{d-1}=A_m,
\qquad
\gamma(1)=\frac{\tau_m+1/p-1/2}{d}=G_{p,m}.
\]
The restriction $\alpha\le1$ refers to this row construction with
disjoint bias cells for every angular direction. A bounded bias interval
contains only $O(h^{-1})$ disjoint $h$-cells. This calculation is not an
optimality statement for arbitrary multiscale hard families.
\end{remark}

\subsection{Completion of the proof of the main theorem}\label{sec:main-proof}

\begin{proof}[Proof of Theorem~\ref{thm:main-sharp}]
\textbf{Step 1: Stable upper estimate.}
Let $C_u$ be the unit-ball coefficient budget in
Theorem~\ref{thm:pure-upper}, and set $M_0=C_u$.  If
$\|f\|_{B_{p,q}^{k+d/p}(\Omega)}\le1$, then
$C_u\|f\|_{B_{p,q}^{k+d/p}(\Omega)}\le M_0$.  Hence monotonicity of the
classes $\Sigma_{n,M}$ and Theorem~\ref{thm:pure-upper} give
\begin{equation}
 E_{n,m}(B_{B_{p,q}^{k+d/p}(\Omega)};M_0)
 \lesssim  n^{-\Gamma_{p,m}}L_n^{\kappa_{p,q,m}},
 \label{eq:main-upper}
\end{equation}
where $\kappa_{p,q,m}$ is the piecewise exponent in that theorem.  In
particular, the same value of $M_0$ works for every width $n$.

\textbf{Step 2: The two lower estimates.}
The angular obstruction in Theorem~\ref{thm:angular-lower} is independent
of the coefficient budget and yields
\begin{equation}
 E_{n,m}(B_{B_{p,q}^{k+d/p}(\Omega)};M_0)
 \ge c n^{-A_m}.
 \label{eq:main-angular}
\end{equation}
For every fixed $0<\vartheta<1$, the direction--bias construction in
Theorem~\ref{thm:refined-packet-lower}, applied with the same $M_0$, gives
\begin{equation}
 E_{n,m}(B_{B_{p,q}^{k+d/p}(\Omega)};M_0)
 \ge c_\vartheta n^{-G_{p,m}}L_n^{-\tau_m/\vartheta}.
 \label{eq:main-packet}
\end{equation}

\textbf{Step 3: Identification of the phase transition.}
Using $A_m=G_{p_\ast,m}$ and the definitions of the two exponents,
we have
\[
 G_{p,m}-A_m
 =\frac1d\left(\frac1p-\frac1{p_\ast}\right).
\]
If $p<p_\ast$, then $G_{p,m}>A_m$,
$\Gamma_{p,m}=A_m$, and $\kappa_{p,q,m}=0$.
Equations~\eqref{eq:main-upper} and~\eqref{eq:main-angular} therefore give
$E_{n,m}\simeq n^{-A_m}$, proving part~(i).  If
$p=p_\ast$, the same angular lower bound and the endpoint case of
\eqref{eq:main-upper} give exactly the two inequalities in part~(ii).
Finally, if $p>p_\ast$, then $G_{p,m}<A_m$ and
$\Gamma_{p,m}=G_{p,m}$.  Combining~\eqref{eq:main-packet} with the last
case of~\eqref{eq:main-upper} gives part~(iii), including both logarithmic
powers.

This completes the proof.
\end{proof}

\section*{Conclusion}
\addcontentsline{toc}{section}{Conclusion}

We have determined the optimal algebraic $H^m$ approximation exponents
for critical Besov balls under a fixed coefficient budget. The direction
sphere and joint direction--bias space yield distinct obstructions,
matched by wavelet sparsity and stable local discretization.

The bounds match without logarithmic loss for $p<p_\ast$ and for
$p=p_\ast$ with $q\le(A_m+1/2)^{-1}$. The other logarithmic gaps remain
open. Critical smoothness isolates variation control without extra
scale decay. Above this line, the upper bounds persist by embedding
but need not be optimal. Below it, fixed-budget representability is a
different question from approximation with an increasing budget.

\section*{Acknowledgments}
\addcontentsline{toc}{section}{Acknowledgments}

The author is grateful to Professor Yuwen Li for suggesting the problem studied in
this paper and for his valuable advice.

\end{document}